\documentclass[11pt]{article}
\usepackage{amsmath}
\usepackage{amssymb,amsmath,times,color}
\usepackage{color}
\usepackage{graphicx,epsfig}
\usepackage{amsbsy}
\usepackage{enumitem,algorithm2e,algorithmic}

\usepackage{subfig} 
\usepackage{verbatim}
\usepackage{cite}
\usepackage{subcaption}

\usepackage{geometry}
\usepackage{pict2e}

\usepackage{caption} 
\usepackage{epstopdf}
\usepackage{hyperref}
\usepackage{ulem}

\numberwithin{equation}{section}
\title{High-Resolution Inertial Dynamics with Time-Rescaled Gradients for Nonsmooth Convex Optimization}

\DeclareMathOperator{\prox}{prox}

\newcommand{\interior}{{\rm int}\kern 0.06em}
\newcommand{\inte}{{\rm int}\kern 0.06em}

    \makeatletter
\def\@fnsymbol#1{\ensuremath{\ifcase#1\or 1\or 2 \or\dagger\or \ddagger\or
   \mathsection\or \mathparagraph\or \|\or **\or \dagger\dagger
   \or \ddagger\ddagger \else\@ctrerr\fi}}
    \makeatother

\newcommand{\sgn}{{\rm sgn}\kern 0.12em}

\newcommand{\bd}{{\rm bd}\kern 0.12em}

\newenvironment{proof}{\bigskip\noindent{\bf Proof.}}{\hfill $\blacksquare$ \par\bigskip\noindent}

\newtheorem{lemma}{Lemma}[section]

\newtheorem{remark}{Remark}[section]
\newtheorem{Assumption}{Assumption}[section]

\newtheorem{theorem}{Theorem}[section]

\begin{document}
\date{}
\author{Manh Hung Le\thanks{Unité de Mathématiques Appliquées, ENSTA, Institut Polytechnique de Paris, 91120 Palaiseau, France. E-mail: manh-hung.le@ensta.fr}
	\and
	Andrea Simonetto\thanks{Unité de Mathématiques Appliquées, ENSTA, Institut Polytechnique de Paris, 91120 Palaiseau, France. E-mail: andrea.simonetto@ensta.fr}
}
\maketitle
 {\sc Abstract.} We study nonsmooth convex minimization through a continuous-time dynamical system that can be seen as a high-resolution ODE of Nesterov Accelerated Gradient (NAG) adapted to the nonsmooth case. We apply a time-varying Moreau envelope smoothing to a proper convex lower semicontinuous objective function and introduce a controlled time-rescaling of the gradient, coupled with a Hessian-driven damping term, leading to our proposed inertial dynamic. We provide a well-posedness result for this dynamical system, and construct a Lyapunov energy function capturing the combined effects of inertia, damping, and smoothing. For an appropriate scaling, the energy dissipates and yields fast decay of the objective function and gradient, stabilization of velocities, and weak convergence of trajectories to minimizers under mild assumptions. Conceptually, the system is a nonsmooth high-resolution model of Nesterov's method that clarifies how time-varying smoothing and time rescaling jointly govern acceleration and stability. We further extend the framework to the setting of maximally monotone operators, for which we propose and analyze a corresponding dynamical system and establish analogous convergence results.  We also present numerical experiments illustrating the effect of the main parameters and comparing the proposed system with several benchmark dynamics.

\bigskip

\noindent \textbf{AMS subject classification:} 34G20, 37N40, 49J52, 65K10, 90C25.

\bigskip

\noindent  \textbf{Key words and phrases:} nonsmooth convex optimization; high-resolution dynamics; acceleration methods; Hessian-driven damping; time rescaling; Moreau envelope.

\section{Introduction}

\subsection{Problem and motivation}

Let $\mathcal{H} $ be a real Hilbert space endowed with the inner product $\langle \cdot, \cdot \rangle$ and the associated norm $\Vert \cdot \Vert$. We study the unconstrained convex (possibly nonsmooth) minimization problem
\begin{align}
\label{original pb}
\min_{x\in\mathcal H} f(x),   
\end{align}
where $f:\mathcal{H}\to\mathbb{R}\cup\{+\infty\}$ is proper, convex, and lower semicontinuous, with nonempty solution set $\text{argmin}_{\mathcal{H}}f \neq \emptyset$. Such nonsmooth convex problems arise frequently in applications (signal processing, statistics, imaging, machine learning) especially due to the presence of sparsity-inducing or low-complexity regularizers. First-order {accelerated} methods, which achieve faster convergence rates by incorporating momentum, are particularly valuable for these problems. In discrete time, the prototypical accelerated method is {Nesterov’s accelerated gradient (NAG)} \cite{Nesterov1983}, which for smooth convex $f$ attains an optimal $O(1/k^2)$ convergence rate in function value after $k$ iterations—significantly faster than the $O(1/k)$ rate of gradient descent or Polyak’s heavy-ball method \cite{Polyak1964, Goujaud2025}. Our aim is to study an {inertial} (second-order) {continuous-time} approach to solving \eqref{original pb}, in the spirit of viewing optimization algorithms through the lens of dynamical systems. 

Over the past decade, continuous-time modeling of accelerated methods has provided substantial insights. In particular, \textit{low-resolution} ordinary differential equations (ODEs) have been derived that capture the leading-order behavior of momentum-based algorithms. A landmark result by Su, Boyd, and Candes \cite{Su2016} showed that Nesterov’s method can be seen as the discretization of the second-order ODE
\[
\ddot x(t) + \frac{\alpha}{t}\dot x(t) + \nabla f(x(t)) = 0,
\]
with $\alpha=3$. This vanishing-viscosity ODE is a continuous-time analog of NAG and achieves a convergence rate $f(x(t)) - \min f = O(1/t^2)$ for $\alpha \ge 3$. Subsequent investigations \cite{ACPR,AP_smallo} established that by using a slightly larger damping coefficient (effectively $\alpha > 3$ in the above ODE), one can ensure the weak convergence of $x(t)$ to a minimizer and even improve the decay rate to $o(1/t^2)$. This line of work forged a firm link between Nesterov’s discrete method and Lyapunov-based convergence analysis in continuous time. A complementary perspective was provided by Wibisono, Wilson, and Jordan \cite{Wibisono2016}, who formulated a Bregman Lagrangian whose Euler–Lagrange equation recovers a family of accelerated flows. Their variational framework explains many accelerated algorithms (including NAG and its non-Euclidean counterparts) as different time-parameterizations of a single underlying trajectory \cite{Wibisono2016}. 
\subsection{High-resolution ODE models} While the classical ODE $$\ddot{x}(t) + \frac{3}{t}\dot{x}(t) + \nabla f(x) = 0$$ replicates NAG’s $O(1/t^2)$ rate, it does so only at {leading order} and neglects certain subtle effects of discretization. Researchers have therefore sought {higher-resolution} differential equations that more faithfully reflect the behavior of the actual algorithms. A major work in this direction was the introduction of a {Hessian-driven damping} term by Attouch, Peypouquet, and Redont \cite{AttouchPeyp2016}. They augmented the Nesterov ODE with an additional term involving the Hessian $\nabla^2 f$, obtaining the flow 
\[
\ddot x(t)+\frac{\alpha}{t}\dot x(t)+\beta\,\nabla^2 f(x(t))\,\dot x(t)+\,\nabla f(x(t))=0,
\]
with $\alpha \ge 3$ and $\beta>0$. This enriched model retains the $O(1/t^2)$ convergence rate in function value, while dramatically reducing oscillations and guaranteeing a rapid decay of $\Vert\nabla f(x(t))\Vert$ to $0$. The Hessian-driven damping term can be interpreted geometrically as a kind of curvature-dependent friction: since $\nabla^2 f(x)\dot{x} = \frac{\mathrm{d}}{\mathrm{d}t}\nabla f(x(t))$, the extra term acts like a derivative of the gradient and helps to attenuate the oscillatory energy of the system. Building on this idea, Attouch, Chbani, Fadili, and Riahi performed a detailed study of a more general version of the above dynamics \cite{AttouchChbani2022} as
\[
\mathrm{(DIN\!-\!AVD)}_{\alpha,\beta,b(t)}\qquad
\ddot x(t)+\frac{\alpha}{t}\dot x(t)+\beta\,\nabla^2 f(x(t))\,\dot x(t)+b(t)\,\nabla f(x(t))=0.
\]
They demonstrated via a Lyapunov analysis that the continuous trajectory enjoys fast convergence of both function values and gradients. A particularly convenient and theoretically revealing choice is \(b(t)=1+\beta/t\). It is worth noticing that in \cite{AttouchChbani2022} the authors  performed a principled discretization of their second-order dynamical system \(\mathrm{(DIN\!-\!AVD)}_{\alpha,\beta,1+\beta/t}\) and obtained an algorithm named {IGAHD}; the numerical experiments carried out by the authors on \(\ell_1\), \(\ell_1\!-\!\ell_2\), TV, and nuclear-norm models showed that IGAHD reduced oscillations and converge at a faster rate than FISTA (the \(\beta=0\) case). In a subsequent work, Attouch and Fadili \cite{AttouchFadili2022} revisited the heavy-ball (Polyak) and Nesterov methods from a dynamical systems viewpoint. They established the so-called high-resolution ODEs corresponding to both methods and revealed that {both} involve a Hessian-driven geometric damping term in continuous time. Specifically, they obtained the following high-resolution dynamic of NAG:
\[
\ddot x(t)\;+\;{\sqrt{s}\,\nabla^2 f(x(t))\,\dot x(t)}\;+\;\frac{\alpha}{t}\dot x(t)\;+\;\Big(1+\frac{\alpha\sqrt{s}}{2t}\Big)\nabla f(x(t))\;=\;0.
\]
Their high-resolution approach reveals two additional structures in the continuous-time limit of NAG:
\begin{enumerate}
    \item A {Hessian-driven damping} term \(\nabla^2 f(x)\,\dot x\), and
    \item A {time-rescaled gradient} of the form \(\big(1+\tfrac{\mathrm{const}}{t}\big)\nabla f(\cdot)\).
\end{enumerate}
Interestingly, we see that the appearance of Hessian-driven damping can be rigorously understood as a consequence of the high-resolution ODE framework. 

Further highlighting the importance of high-resolution modeling, Shi, Du, Su, and Jordan \cite{Shi2019} pointed out that the classical low-resolution ODEs (such as the Su–Boyd–Candes dynamic) do not actually distinguish between Polyak’s momentum and Nesterov’s method in certain regimes. By deriving higher-order differential equations and applying a symplectic (structure-preserving) integrator, Shi et al. were able to recover true accelerated algorithms in discrete time for smooth strongly convex problems, whereas naive Euler discretizations of the basic ODE often fail to do so \cite{Shi2019}. All of these works underscore that incorporating second-order information (e.g. Hessian damping) and time-rescaled gradients into the continuous model is crucial for capturing and explaining the distinct behavior of accelerated optimization methods.
\subsection{Time-rescaled dynamics}
Another fruitful direction in continuous optimization dynamics is the use of {time rescaling} to accelerate convergence. In standard gradient flow or Nesterov’s flow, time is uniform; but one can intentionally speed up the system’s evolution by rescaling time (or equivalently, introducing time-varying coefficients in the ODE). For example, consider modifying the Nesterov ODE to
\[
\ddot{x}(t)+\frac{\alpha}{t}\dot{x}(t)+b(t)\,\nabla f(x(t))=0,
\]
where $b$ is an increasing function. As shown by Attouch, Chbani, and Riahi \cite{AttouchRiahi2019}, the factor $b(t)$ effectively accelerates the driving gradient force and can yield improved decay rates of order $O\big(\frac{1}{t^2b(t)}\big)$.
\subsection{Moreau envelope smoothing for nonsmooth problems}
The discussion so far has assumed $f$ is smooth (at least $C^1$ with Lipschitz gradient). However, our ultimate interest is the nonsmooth case, where $f$ may not be differentiable. A standard approach to handle a nonsmooth convex $f$ is to introduce its {Moreau envelope}. For \(\gamma>0\), the {Moreau envelope} of a proper, convex, and lower semicontinuous function 
$f:\mathcal{H}\to\mathbb{R}\cup\{+\infty\}$
 with parameter \(\gamma\) is defined by
\[
f_\gamma(x) := \min_{y \in \mathcal H} \Bigl\{ f(y) + \tfrac{1}{2\gamma}\|x - y\|^2 \Bigr\}.
\]
The unique minimizer in the definition is given by the {proximal map}
\[
\operatorname{prox}_{\gamma f}(x) := \arg\min_{y \in \mathcal H} \Bigl\{ f(y) + \tfrac{1}{2\gamma}\|x - y\|^2 \Bigr\}.
\]
The Moreau envelope satisfies the following classical properties:
\begin{enumerate}
    \item \(f_\gamma\) is convex and continuously differentiable even if \(f\) is nonsmooth;
    \item The gradient is Lipschitz with
    \[
    \nabla f_\gamma(x) = \tfrac{1}{\gamma}\bigl(x - \operatorname{prox}_{\gamma f}(x)\bigr)
    \qquad\text{and}\qquad
    \|\nabla f_\gamma(x) - \nabla f_\gamma(y)\| \le \tfrac{1}{\gamma}\|x-y\|;
    \]
    \item The envelope preserves the set of minimizers, i.e.,
    \[
    \arg\min f_\gamma = \arg\min f.
    \]
\end{enumerate}
Hence, \(f_\gamma\) provides a smooth approximation of \(f\) that retains the same minimizers and has a controlled gradient Lipschitz constant \(1/\gamma\). 

This smoothing technique has been leveraged in continuous dynamics by Attouch and Cabot \cite{AttouchCabot2018JDE}, who were among the first to analyze an {inertial Moreau-envelope flow} for nonsmooth optimization. They considered the ODE
\begin{align*}
    \ddot{x}(t)+\frac{\alpha}{t}\dot{x}(t)+\nabla f_{\lambda(t)}(x(t))=0,
\end{align*}
where $\lambda(t)$ is a smoothing parameter that may slowly vary with time. They proved that by appropriately choosing $\lambda(t)$, the trajectory $x(t)$ enjoys accelerated convergence in terms of the smoothed objective: $f_{\lambda(t)}(x(t)) - f_* = o(1/t^2)$, along with $\Vert \dot{x}(t)\Vert = o(1/t)$ and weak convergence of $x(t)$ to a minimizer. Furthermore, these convergence rates for the envelope translate back to the original objective via a {proximal shadow} argument, meaning the true function values $f(x(t))$ also approach the optimum. More recently, Attouch and Laszló \cite{AttouchLaszlo2021} incorporated Hessian-driven damping into such an envelope-based inertial dynamic. They showed that adding the $\beta\frac{\mathrm{d}}{\mathrm{d}t}\nabla f_{\lambda}(x(t))$ term yields similar convergence properties as in the smooth case (fast decay of the envelope’s gradient norm, etc.) Concurrently, other researchers have explored continuous-time methods that combine envelope smoothing with time rescaling or other regularization techniques. For example, Boţ and Karapetyants \cite{BotKarapetyants2022} proposed a fast inertial flow for nonsmooth convex problems that employs an explicit time scaling together with the Moreau envelope smoothing to achieve accelerated convergence. In a follow-up work, Karapetyants \cite{Karapetyants2023} studied a related continuous-time approach using Tikhonov (ridge) regularization of the objective, and provided convergence guarantees for the resulting trajectories. Other relevant literature is captured by the works of ~\cite{Adly2026,Cortild2024,AttouchLaszlo2021, AttouchSvaiter2011,WangFadiliOchs2025,AAF}. In general, these works confirm that the powerful acceleration principles known for smooth dynamics (momentum, time rescaling, etc.) can be successfully transferred to the nonsmooth setting by working with regularized/smoothed surrogates of $f$.

\subsection{Our dynamics: a nonsmooth high-resolution approach with time-rescaled gradients}

We build on this rich literature by formulating a new high-resolution inertial dynamic tailored to {nonsmooth} convex minimization. Our dynamical system employs both the high-resolution framework \cite{AttouchFadili2022} and the {time-rescaled gradients} (as in Attouch–Chbani–Riahi \cite{AttouchRiahi2019}), applied in conjunction with a time-varying Moreau envelope smoothing of $f$.
\begin{equation}
\boxed{\;
\ddot x(t)\;+\;\frac{\alpha}{t}\dot x(t)\;+\;\beta\,\frac{\mathrm{d}}{\mathrm{d}t}\!\Big[\delta(t)\,\nabla f_{\gamma(t)}(x(t))\Big]
\;+\;\Big(1+\frac{\beta}{t}\Big)\delta(t)\,\nabla f_{\gamma(t)}(x(t))\;=\;0.
\;}
\quad (\text{NS-HR})_{\alpha,\beta}
\end{equation}
Here the parameters have the following roles:
\begin{itemize}
    \item \( \alpha>0 \): controls the {vanishing viscous damping} coefficient \( \alpha/t \);
    \item \( \beta>0 \): weights the { Hessian-driven damping} contribution;
    \item \( \gamma(t)>0 \): specifies the {time-varying smoothing parameter} in the Moreau envelope \( f_{\gamma(t)} \);
    \item \( \delta(t)>0 \): governs the {time-rescaling of the gradient}.
\end{itemize} 
Let us stress that the envelope yields \(C^1\) smoothness with Lipschitz gradient, preserves \(\arg\min f\), and lets us encode {Hessian-driven damping} through the time derivative of \(\nabla f_{\gamma(t)}\) without computing Hessians.
It is worth noticing that our dynamics can be seen as a nonsmooth, time-rescaled extension of the $\mathrm{(DIN\!-\!AVD)}_{\alpha,\beta,1+\beta/t}$ studied in the smooth case by Attouch et al. \cite{AttouchChbani2022}.\\[2mm]
We will also extend $(\text{NS-HR})_{\alpha,\beta}$ to deal with the problem of finding zeros of maximally monotone operators. Specifically, we are interested in: 
\begin{align*}
    \text{Find } x \in \mathcal{H}: 0 \in A(x),
\end{align*}
where $A:\mathcal{H}\to 2^{\mathbf{\mathcal{H}}}$ is a maximally monotone operator. \\
Analogously to the case of nonsmooth optimization, we propose the following dynamic:  
\begin{equation*}
\boxed{\ddot x(t)\;+\;\frac{\alpha}{t}\dot x(t)\;+\;\beta\,\frac{\mathrm{d}}{\mathrm{d}t}\!\Big[\delta(t)\,A_{\gamma(t)}(x(t))\Big]
\;+\;\Big(1+\frac{\beta}{t}\Big)\delta(t)\,A_{\gamma(t)}(x(t))\;=\;0,} \quad (\text{HR-MMD})_{\alpha,\beta}
\end{equation*}
where $A_{\gamma(t)}$ is the Yosida approximation of $A$ with constant $\gamma(t)$. \\[3mm]
\textbf{Particularizations and links to existing dynamics.}
The proposed dynamical system $(\mathrm{NS\text{-}HR})_{\alpha,\beta}$ can be viewed as a unifying template that
recovers several classical continuous-time models for accelerated optimization as soon as the parameters
$(\alpha,\beta,\delta,\gamma)$ are specialized.

\smallskip
\noindent
\emph{Low-resolution and time-rescaled limits.}
Setting $\beta=0$ removes the high-resolution term and yields a low-resolution inertial envelope flow \cite{BotKarapetyants2022}
\[
\ddot x(t)+\frac{\alpha}{t}\dot x(t)+\delta(t)\nabla f_{\gamma(t)}(x(t))=0.
\]
When $\delta(t)\equiv 1$, this recovers the nonsmooth inertial Moreau-envelope dynamics studied in
\cite{AttouchCabot2018JDE} (with time-dependent smoothing). When $\gamma(t)$ is neglected, one obtains a time-rescaled Nesterov-type flow of the form
$\ddot x+\frac{\alpha}{t}\dot x + b(t)\nabla f(x)=0$ \cite{AttouchRiahi2019}. In particular, taking $\delta(t)\equiv 1$ and $\beta=0$ recovers the Su--Boyd--Cand\`es ODE
$\ddot x + \frac{\alpha}{t}\dot x + \nabla f(x)=0$ in the smooth setting \cite{Su2016}.

\smallskip
\noindent
\emph{High-resolution/Hessian-driven damping limits.}
When $\beta>0$, the term $\frac{d}{dt}(\delta(t)\nabla f_{\gamma(t)}(x(t)))$ introduces a Hessian-driven damping
effect without explicit Hessian computations, since for smooth $f$ one has
$\frac{d}{dt}(\nabla f(x(t)))=\nabla^2 f(x(t))\dot x(t)$.
If $f$ is smooth, $\delta(t)\equiv 1$, and $\gamma(t)$ is neglected, $(\mathrm{NS\text{-}HR})_{\alpha,\beta}$ reduces to the
high-resolution inertial system $(\mathrm{DIN\text{-}AVD})_{\alpha,\beta,\,1+\beta/t}$ studied in \cite{AttouchChbani2022},
which underlies the IGAHD discretization (in the same paper). Overall, $(\mathrm{NS\text{-}HR})_{\alpha,\beta}$ provides a common envelope-based extension of low-resolution Nesterov flows,
time-rescaled inertial dynamics, and high-resolution Hessian-damped models.

\begin{table}[t]
\centering
\caption{Particularizations of $(\mathrm{NS\text{-}HR})_{\alpha,\beta}$ highlighted in the discussion.}
\label{tab:particularizations_discussion}
\renewcommand{\arraystretch}{1.2}\centering
\begin{tabular}{p{0.30\linewidth} p{0.48\linewidth} c}
\hline
\textbf{Parameter choice} & \textbf{Resulting dynamics} & \textbf{Reference} \\
\hline

$\beta=0$ (general $\delta,\gamma$) 
&
$\ddot x(t)+\frac{\alpha}{t}\dot x(t)+\delta(t)\nabla f_{\gamma(t)}(x(t))=0$
(low-resolution inertial envelope flow)
& \cite{BotKarapetyants2022} \\\hline

$\beta=0,\ \delta(t)\equiv 1$ 
&
$\ddot x(t)+\frac{\alpha}{t}\dot x(t)+\nabla f_{\gamma(t)}(x(t))=0$
(nonsmooth inertial Moreau-envelope dynamics with smoothing)
& \cite{AttouchCabot2018JDE} \\ \hline

$\beta=0,\ \gamma(t)\ \text{neglected}$, \newline $\delta(t)=b(t)$
&
$\ddot x(t)+\frac{\alpha}{t}\dot x(t)+b(t)\nabla f(x(t))=0$
(time-rescaled Nesterov-type flow)
& \cite{AttouchRiahi2019} \\\hline

$\beta=0,\ \delta(t)\equiv 1$, \newline $ \gamma(t)\ \text{neglected}$
&
$\ddot x(t)+\frac{\alpha}{t}\dot x(t)+\nabla f(x(t))=0$
(Su--Boyd--Cand\`es low-resolution ODE)
& \cite{Su2016} \\\hline

$\beta>0,\ \delta(t)\equiv 1$, \newline $ \gamma(t)\ \text{neglected},\ f\ \text{smooth}$
&
$(\mathrm{DIN\text{-}AVD})_{\alpha,\beta,\,1+\beta/t}$
(high-resolution inertial system / Hessian-driven damping limit)
& \cite{AttouchChbani2022} \\

\hline
\end{tabular}
\end{table}

\textbf{Contribution of the paper} 
\begin{enumerate}
    \item \textbf{Nonsmooth high-resolution dynamic with time-rescaled gradients.} We propose and study the dynamical system $(\text{NS-HR})_{\alpha,\beta}$, which can be viewed as a high-resolution continuous-time model tailored for {nonsmooth} convex optimization. This model incorporates several key mechanisms simultaneously: a vanishing viscous damping term that governs long-term stabilization, a {Hessian-driven damping} component inherited from the high-resolution ODE perspective, and a gradient term whose magnitude is appropriately rescaled in time. We further show that $(\text{NS-HR})_{\alpha,\beta}$ admits an equivalent first-order formulation, which plays a crucial role in potential discretizations. In addition, we establish a rigorous well-posedness theory for the system by proving global existence and uniqueness of solutions under natural assumptions on the data.
    \item \textbf{Lyapunov framework and asymptotics.} We construct a Lyapunov function specifically adapted to $(\text{NS-HR})_{\alpha,\beta}$. This functional allows us to derive a detailed quantitative description of the asymptotic behavior of the trajectories. In particular, we obtain arbitrarily fast decay rates for both objective function values and gradient norms, demonstrating that the system can achieve accelerated convergence in a precise sense. Furthermore, we show stabilization of the velocity field and prove the {weak convergence} of trajectories toward minimizers of the underlying convex function, thereby providing a complete qualitative picture of the dynamics.
    \item \textbf{Extension to maximally monotone operators.}  
We propose the dynamical system $(\text{HR-MMD})_{\alpha,\beta}$ and extend the analysis developed in the convex optimization setting to the broader and more abstract framework of finding zeros of maximally monotone operators. In this extension, we demonstrate that the fundamental structural results, including well-posedness, Lyapunov decay estimates, and convergence properties, carry over under appropriate assumptions.
\item \textbf{Numerical experiments and a proximal discretization perspective.}
We complement the theoretical analysis with a series of numerical experiments illustrating the influence of the parameters \(\beta\) and \(\alpha\), as well as comparisons with several benchmark dynamics from the literature. These experiments highlight the damping effect of the Hessian-driven term, confirm our theoretical results, and show the favorable behavior of the proposed system relative to competing approaches. In addition, we derive a natural second-order time discretization of \((\mathrm{NS\text{-}HR})_{\alpha,\beta}\) and show that the resulting implicit scheme can be resolved by a single proximal step, thereby providing an algorithmic counterpart of the continuous model.
\end{enumerate}

\subsection{Paper outline}
The remainder of the paper is organized as follows. Section~2 derives an equivalent first-order reformulation of \((\mathrm{NS\text{-}HR})_{\alpha,\beta}\), and establishes global existence and uniqueness of solution trajectories. Section~3 develops the main Lyapunov framework and provides the asymptotic analysis of the system, including fast decay rates for the Moreau-envelope objective residual and gradient norm, stabilization of velocities, and weak convergence of trajectories under suitable assumptions on the time-dependent parameters. Section~4 extends the analysis to the setting of maximally monotone operators through the corresponding high-resolution Yosida-regularized dynamic. Section~5 discusses the relationship between the two analyses and highlights similarities and differences between the optimization and monotone-inclusion settings. Section~6 discusses a natural discretization of \((\mathrm{NS\text{-}HR})_{\alpha,\beta}\), showing that the resulting implicit scheme can be resolved through a single proximal step. Section~7 is devoted to numerical experiments: we investigate the influence of the parameters \(\beta\) and \(\alpha\), and compare the proposed system with several reference dynamics from the literature.  Finally, Section~8 concludes the paper and outlines several perspectives for future work.
\section{Wellposedness of $(\text{NS-HR})_{\alpha,\beta}$}

\subsection{First-order reformulation of $(\text{NS-HR})_{\alpha,\beta}$}

We start with a first-order reformulation of $(\text{NS-HR})_{\alpha,\beta}$, which we will leverage in the subsequent proofs. 
\begin{theorem}
\label{thm:first-order}
    {Suppose $\delta:[t_0,+\infty
    ) \to (0,+\infty)$ and $\gamma:[t_0,+\infty
    ) \to (0,+\infty)$ are continuously differentiable functions}. Let $(x_0,v_0,y_0) \in \mathcal{H}^3$ satisfying 
    \begin{align}
    \label{compatibility condition}
        v_0=-\beta\delta(t_0)\nabla f_{\gamma(t_0)}(x_0)-\Big(\frac{\alpha-1}{t_0}-\frac{1}{\beta}\Big)x_0-\frac{1}{\beta}y_0.
    \end{align}
    Then, the following are equivalent 
    \begin{enumerate}[label=(\roman*)]
        \item $x$ is a solution to $(\text{NS-HR})_{\alpha,\beta}$ with $x(t_0)=x_0$, and $\dot{x}(t_0)=v_0$. 
        \item There exists $y$ such that $(x,y)$ satisfies 
\begin{align}
 \dot{x}(t)&+\beta\delta(t)\nabla f_{\gamma(t)}(x(t))+\Big(\frac{\alpha-1}{t}-\frac{1}{\beta}\Big)x(t)+\frac{1}{\beta}y(t)=0, \label{eq:x}\\
\dot{y}(t)
&- \Big(\frac{1}{\beta}-\frac{\alpha-2}{t}\Big)x(t)
+ \Big(\frac{1}{t}+\frac{1}{\beta}\Big)y(t)
= 0, \label{eq:y}
\end{align}
        with $(x(t_0),y(t_0))=(x_0,y_0)$. 
    \end{enumerate}
\end{theorem}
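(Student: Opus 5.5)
We prove the two implications directly. The definition of $y$ is forced by \eqref{eq:x}, and the rest is verification by differentiation. Throughout, write $g(t):=\delta(t)\nabla f_{\gamma(t)}(x(t))$. Both directions use that $t\mapsto g(t)$ is differentiable whenever it is required to be, which is exactly the regularity built into the notion of solution of $(\text{NS-HR})_{\alpha,\beta}$.

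\textbf{(i)$\Rightarrow$(ii).} Given a solution $x$, we \emph{define} $y$ by solving \eqref{eq:x} for it:
\begin{align*}
y(t):=-\beta\dot x(t)-\beta^2 g(t)-\beta\Big(\frac{\alpha-1}{t}-\frac1\beta\Big)x(t).
\end{align*}
Then \eqref{eq:x} holds by construction. At $t_0$, the compatibility condition \eqref{compatibility condition} together with $\dot x(t_0)=v_0$ gives $y(t_0)=y_0$.

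Since $\ddot x$ and $\dot g$ exist, $y$ is differentiable, and
\begin{align*}
\dot y=-\beta\ddot x-\beta^2\dot g+\frac{\beta(\alpha-1)}{t^2}x-\Big(\frac{\beta(\alpha-1)}{t}-1\Big)\dot x.
\end{align*}
We then use the equation $(\text{NS-HR})_{\alpha,\beta}$ to substitute $\ddot x=-\frac{\alpha}{t}\dot x-\beta\dot g-(1+\frac\beta t)g$. The terms in $\dot g$ cancel exactly, which is why the Hessian-type term disappears in the first-order form. We are left with
\begin{align*}
\dot y=\Big(1+\frac\beta t\Big)\dot x+\beta\Big(1+\frac\beta t\Big)g+\frac{\beta(\alpha-1)}{t^2}x.
\end{align*}

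Next we eliminate $\dot x$ using \eqref{eq:x}, that is, $\dot x=-\beta g-(\frac{\alpha-1}{t}-\frac1\beta)x-\frac1\beta y$. The $g$ terms cancel again. Collecting the coefficients of $x$ and $y$ should reproduce \eqref{eq:y}:
\begin{itemize}
\item The coefficient of $y$ is $-\frac1\beta(1+\frac\beta t)=-(\frac1\beta+\frac1t)$.
\item The coefficient of $x$ is $-(1+\frac\beta t)(\frac{\alpha-1}{t}-\frac1\beta)+\frac{\beta(\alpha-1)}{t^2}=\frac1\beta-\frac{\alpha-2}{t}$.
\end{itemize}
The $1/t^2$ terms cancel in the second computation. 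This coefficient bookkeeping is the only real obstacle, and we carry it out carefully as above.

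\textbf{(ii)$\Rightarrow$(i).} Let $(x,y)$ solve the system. Then \eqref{eq:x} gives $\dot x(t_0)=v_0$ through \eqref{compatibility condition}. Moreover, $\dot x+\beta g=-(\frac{\alpha-1}{t}-\frac1\beta)x-\frac1\beta y$ is $C^1$, because $x$ and $y$ are $C^1$. We differentiate \eqref{eq:x}, which is legitimate in this combined form, and obtain
\begin{align*}
\ddot x+\beta\dot g-\frac{\alpha-1}{t^2}x+\Big(\frac{\alpha-1}{t}-\frac1\beta\Big)\dot x+\frac1\beta\dot y=0.
\end{align*}
We substitute $\dot y$ from \eqref{eq:y}, and then $y$ from \eqref{eq:x}, namely $y=-\beta\dot x-\beta^2g-\beta(\frac{\alpha-1}{t}-\frac1\beta)x$. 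The same coefficient computation run backwards yields:
\begin{itemize}
\item coefficient $\frac\alpha t$ on $\dot x$;
\item coefficient $1+\frac\beta t$ on $g$;
\item zero coefficient on $x$.
\end{itemize}
This is exactly $(\text{NS-HR})_{\alpha,\beta}$.

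One subtlety needs a remark. In this direction $\ddot x$ and $\dot g$ need not exist separately; only $\ddot x+\beta\dot g$ does. We therefore interpret the second-order equation in the sense $\frac{d}{dt}(\dot x+\beta g)$. This matches how the term $\beta\frac{d}{dt}[\delta\nabla f_{\gamma}(x)]$ is grouped in $(\text{NS-HR})_{\alpha,\beta}$.
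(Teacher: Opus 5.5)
Your proof is correct and follows the paper's argument. For (i)$\Rightarrow$(ii) you define $y$ by solving \eqref{eq:x} and differentiate; for (ii)$\Rightarrow$(i) you differentiate \eqref{eq:x} and substitute from the system; in both directions the compatibility condition handles the initial data, and your coefficient bookkeeping checks out. Your added remark that in (ii)$\Rightarrow$(i) only the grouped derivative $\frac{d}{dt}(\dot x+\beta g)$ is guaranteed to exist is a point the paper passes over silently, and it is harmless: (i)$\Rightarrow$(ii) also works with only that grouped derivative.
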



\begin{proof}
\noindent\textbf{(ii)$\;\Rightarrow\;$ (i).}
Let us differentiate \eqref{eq:x}:
\[
\ddot{x}(t)+\beta\frac{\mathrm{d}}{\mathrm{d}t}\Big[\delta(t)\nabla f_{\gamma(t)}(x(t))\Big]+\Big(\frac{\alpha-1}{t}-\frac{1}{\beta}\Big)\dot{x}(t)-\frac{\alpha-1}{t^2}x(t)+\frac{1}{\beta}\dot{y}(t)=0.
\]
This implies 
\begin{align}
\label{derivative of first eq}
    \ddot{x}(t)+\frac{\alpha}{t}\dot{x}(t)+\beta\frac{\mathrm{d}}{\mathrm{d}t}\Big[\delta(t)\nabla f_{\gamma(t)}(x(t))\Big]-\Big(\frac{1}{t}+\frac{1}{\beta}\Big)\dot{x}(t)-\frac{\alpha-1}{t^2}x(t)+\frac{1}{\beta}\dot{y}(t)=0.
\end{align}
From \eqref{derivative of first eq}, to conclude that $x$ has to satisfy $(\text{NS-HR})_{\alpha,\beta}$, it suffices to prove that
\begin{align*}
-\Big(\frac{1}{t}+\frac{1}{\beta}\Big)\dot{x}(t)-\frac{\alpha-1}{t^2}x(t)+\frac{1}{\beta}\dot{y}(t)=\Big(1+\frac{\beta}{t}\Big)\delta(t)\nabla f_{\gamma(t)}(x(t)). 
\end{align*}
Indeed, from the first-order formulation we yield 
\begin{align*}
&-\Big(\frac{1}{t}+\frac{1}{\beta}\Big)\dot{x}(t)-\frac{\alpha-1}{t^2}x(t)+\frac{1}{\beta}\dot{y}(t) \\
=&-\Big(\frac{1}{t}+\frac{1}{\beta}\Big)\Big[-\beta\delta(t)\nabla f_{\gamma(t)}(x(t))-\Big(\frac{\alpha-1}{t}-\frac{1}{\beta}\Big)x(t)-\frac{1}{\beta}y(t)\Big]-\frac{\alpha-1}{t^2}x(t) \\
&+\frac{1}{\beta}\Big[\Big(\frac{1}{\beta}-\frac{\alpha-2}{t}\Big)x(t)-\Big(\frac{1}{t}+\frac{1}{\beta}\Big)y(t)\Big] \\
=&\Big(1+\frac{\beta}{t}\Big)\delta(t)\nabla f_{\gamma(t)}(x(t)) +\Big[\Big(\frac{1}{t}+\frac{1}{\beta}\Big)\Big(\frac{\alpha-1}{t}-\frac{1}{\beta}\Big)-\frac{\alpha-1}{t^2}+\frac{1}{\beta}\Big(\frac{1}{\beta}-\frac{\alpha-2}{t}\Big)\Big]x(t)\\
&+\Big[\Big(\frac{1}{t}+\frac{1}{\beta}\Big)\Big(\frac{1}{\beta}\Big)-\frac{1}{\beta}\Big(\frac{1}{t}+\frac{1}{\beta}\Big)\Big]y(t). 
\end{align*}
The proof for this implication is thereby completed by noticing 
$$
\begin{cases}
    \Big(\frac{1}{t}+\frac{1}{\beta}\Big)\Big(\frac{\alpha-1}{t}-\frac{1}{\beta}\Big)-\frac{\alpha-1}{t^2}+\frac{1}{\beta}\Big(\frac{1}{\beta}-\frac{\alpha-2}{t}\Big)=0, \\
    \Big(\frac{1}{t}+\frac{1}{\beta}\Big)\Big(\frac{1}{\beta}\Big)-\frac{1}{\beta}\Big(\frac{1}{t}+\frac{1}{\beta}\Big)=0,
\end{cases}
$$
and that the initial condition is automatically satisfied by the requirement on $v_0$ expressed in Eq.~\eqref{compatibility condition}. 

\noindent\textbf{(i)$\;\Rightarrow\;$ (ii).}
Define $y:[t_0,+\infty) \longrightarrow \mathcal{H}$ according to the requirement on $v_0$ expressed in Eq.~\eqref{compatibility condition} as well as the equation 
\begin{align}
\label{define y}
    \dot{x}(t)+\beta\delta(t)\nabla f_{\gamma(t)}(x(t))+\Big(\frac{\alpha-1}{t}-\frac{1}{\beta}\Big)x(t)+\frac{1}{\beta}y(t)=0.
\end{align}
As previously seen, the differentiation of this equation with respect to $t$ gives the second-order dynamic
\begin{align}
\label{first diff}
    \ddot{x}(t)+\frac{\alpha}{t}\dot{x}(t)+\beta\frac{\mathrm{d}}{\mathrm{d}t}\Big[\delta(t)\nabla f_{\gamma(t)}(x(t))\Big]-\Big(\frac{1}{t}+\frac{1}{\beta}\Big)\dot{x}(t)-\frac{\alpha-1}{t^2}x(t)+\frac{1}{\beta}\dot{y}(t)=0.
\end{align}
From the definition of $x$ and $y$, namely $(\text{NS-HR})_{\alpha,\beta}$ and \eqref{define y} we have 
$$
\begin{cases}
    \ddot{x}(t)+\frac{\alpha}{t}\dot{x}(t)+\beta\frac{\mathrm{d}}{\mathrm{d}t}\Big[\delta(t)\nabla f_{\gamma(t)}(x(t))\Big]=-\Big(1+\frac{\beta}{t}\Big)\delta(t)\nabla f_{\gamma(t)}(x(t)),\\
    -\Big(\frac{1}{t}+\frac{1}{\beta}\Big)\dot{x}(t)=\Big(1+\frac{\beta}{t}\Big)\delta(t)\nabla f_{\gamma(t)}(x(t))+\Big(\frac{1}{t}+\frac{1}{\beta}\Big)\Big(\frac{\alpha-1}{t}-\frac{1}{\beta}\Big)x(t)+\frac{1}{\beta}\Big(\frac{1}{t}+\frac{1}{\beta}\Big)y(t).
\end{cases}
$$
Plugging these identities in \eqref{first diff} gives
\begin{align*}
-\Big(1+\frac{\beta}{t}\Big)\delta(t)\nabla f_{\gamma(t)}(x(t))+\Big(1+\frac{\beta}{t}\Big)\delta(t)\nabla f_{\gamma(t)}(x(t))+\Big(\frac{1}{t}+\frac{1}{\beta}\Big)\Big(\frac{\alpha-1}{t}-\frac{1}{\beta}\Big)x(t)\\+\frac{1}{\beta}\Big(\frac{1}{t}+\frac{1}{\beta}\Big)y(t)-\frac{\alpha-1}{t^2}x(t)+\frac{1}{\beta}\dot{y}(t)=0.
\end{align*}
Simplification yields
\begin{align*}
 \frac{1}{\beta}\Big(\frac{\alpha-2}{t}-\frac{1}{\beta}\Big)x(t)+\frac{1}{\beta}\Big(\frac{1}{t}+\frac{1}{\beta}\Big)y(t)+\frac{1}{\beta}\dot{y}(t)=0.
\end{align*}
Multiplying this equation by $\beta$ gives \eqref{eq:y}, hence completing the proof. 
\end{proof}
\subsection{Existence and uniqueness of a solution trajectory of $(\text{NS-HR})_{\alpha,\beta}$}

To establish the existence and uniqueness of a solution to $(\text{NS-HR})_{\alpha,\beta}$, we make use of its first-order reformulation. Set $z(t)=(x(t),y(t))$, and $F:[t_0,+\infty)\times\mathcal{H}^2\to\mathcal{H}^2$ defined by 
\begin{align}\label{1stds}
    F(t,(x,y))=\Big[-\beta\delta(t)\nabla f_{\gamma(t)}(x)-\Big(\frac{\alpha-1}{t}-\frac{1}{\beta}\Big)x-\frac{1}{\beta}y, \Big(\frac{1}{\beta}-\frac{\alpha-2}{t}\Big)x-\Big(\frac{1}{t}+\frac{1}{\beta}\Big)y\Big].
\end{align}
$(\text{NS-HR})_{\alpha,\beta}$ can now be written equivalently as 
\begin{align*}
    \dot{z}(t)=F(t,z(t)).
\end{align*}
We now will rely on the following result \cite[Proposition 6.2.1]{Haraux}.
\begin{theorem} \cite[Proposition 6.2.1]{Haraux}
\label{thm:existence_uniqueness}
    Let $X$ be a Banach space, $s_0>0$, and  $g:[s_0,+\infty)\times X \longrightarrow X$. Suppose that $g$ satisfies the following properties:
    \begin{enumerate}[label=(\roman*)]
        \item $\forall x\in X$, $g(\cdot,x) \in L^1_{loc}([s_0,+\infty),X)$. 
        \item For a.e. $t \in [s_0,+\infty)$, $g(t,\cdot):X\longrightarrow X$ is continuous and satisfies 
        \begin{align*}
            \forall x, y \in X, \Vert g(t,x)-g(t,y) \Vert \le K(t,\Vert x\Vert +\Vert y \Vert)\Vert x-y\Vert,
        \end{align*}
        where $K(\cdot,r) \in L^1_{loc}([s_0,+\infty)) \quad \forall r\geq 0$. 
        \item For a.e. $t \in [s_0,+\infty)$, $g(t,\cdot):X\longrightarrow X$ satisfies 
        \begin{align*}
            \Vert g(t,x) \Vert \le P(t)(1+\Vert x \Vert) \quad \forall x \in X,
        \end{align*}
        where $P \in L^1_{loc}([s_0,+\infty))$.
    \end{enumerate}
    Then, for every $u_0 \in X$, there exists a unique function $u \in W^{1,1}_{loc}([s_0,+\infty),X)$, i.e., $u$ is locally absolutely continuous on $[s_0,+\infty)$, such that 
    $$
    \begin{cases}
        \dot{u}(t)=g(t,u(t)) \text{ for a.e. } t \in [s_0,+\infty), \\
        u(s_0)=u_0.
    \end{cases}
    $$
\end{theorem}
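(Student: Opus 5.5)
The plan is the classical Cauchy--Lipschitz--Carath\'eodory argument: rewrite the problem as the integral equation
\begin{equation*}
u(t)=u_0+\int_{s_0}^{t} g(s,u(s))\,\mathrm{d}s ,
\end{equation*}
obtain a local solution by a contraction argument, prove uniqueness by Gronwall's lemma, and use the linear growth condition (iii) to rule out blow-up, so that the local solution extends to all of $[s_0,+\infty)$. A locally absolutely continuous $u$ solves the Cauchy problem a.e.\ if and only if it solves this integral equation. For the integral to make sense I will first check measurability. If $u$ is continuous, it is a uniform limit of step functions $u_n$. Each map $s\mapsto g(s,u_n(s))$ is measurable by (i). By the continuity in (ii), $g(s,u_n(s))\to g(s,u(s))$ for a.e.\ $s$, so $s\mapsto g(s,u(s))$ is strongly measurable. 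By (iii), $\Vert g(s,u(s))\Vert\le P(s)(1+\sup\Vert u\Vert)$, so it is locally Bochner integrable.

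\textbf{Local existence.} Fix $T>s_0$ and $R>\Vert u_0\Vert$. On the set $\mathcal{B}=\{u\in C([s_0,s_0+h],X):\ \Vert u(t)\Vert\le R\}$, equipped with the sup norm, define $\Phi(u)(t)=u_0+\int_{s_0}^t g(s,u(s))\,\mathrm{d}s$. Condition (iii) gives $\Vert\Phi(u)(t)\Vert\le \Vert u_0\Vert+(1+R)\int_{s_0}^{s_0+h}P$. Condition (ii) with $r=2R$ gives $\Vert\Phi(u)-\Phi(w)\Vert_\infty\le \Big(\int_{s_0}^{s_0+h}K(s,2R)\,\mathrm{d}s\Big)\Vert u-w\Vert_\infty$. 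Since $P$ and $K(\cdot,2R)$ are locally integrable, their integrals over short intervals are small by absolute continuity of the integral. So for $h$ small, $\Phi$ maps $\mathcal{B}$ into itself and is a strict contraction, and Banach's fixed point theorem gives a local solution. The same argument restarts from any time $\tau$ and any initial datum. The admissible step $h$ depends only on $\tau$ and a bound on the datum, and it is uniform when $\tau$ ranges in a compact set and the datum ranges in a bounded set.

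\textbf{Uniqueness.} Let $u,w$ be two solutions on $[s_0,\tau]$, and let $r$ bound $\Vert u\Vert+\Vert w\Vert$ there. Then
\begin{equation*}
\Vert u(t)-w(t)\Vert\le\int_{s_0}^t K(s,r)\Vert u(s)-w(s)\Vert\,\mathrm{d}s ,
\end{equation*}
and Gronwall's lemma, in its form with an $L^1$ kernel, gives $u=w$. In particular, the local solutions obtained above patch together consistently into a single maximal solution on $[s_0,T_{\max})$.

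\textbf{Global existence, the main obstacle.} This is the key point: the Lipschitz constant in (ii) may grow with $\Vert x\Vert$, so globality rests entirely on an a priori bound. Suppose $T_{\max}<\infty$. Along the maximal solution, (iii) gives
\begin{equation*}
1+\Vert u(t)\Vert\le 1+\Vert u_0\Vert+\int_{s_0}^t P(s)\bigl(1+\Vert u(s)\Vert\bigr)\,\mathrm{d}s .
\end{equation*}
By Gronwall, $\Vert u(t)\Vert\le (1+\Vert u_0\Vert)\exp\bigl(\int_{s_0}^{T_{\max}}P\bigr)=:M$ on $[s_0,T_{\max})$. Then $\Vert u(t)-u(t')\Vert\le (1+M)\int_{t'}^{t}P$, so $u$ is Cauchy as $t\to T_{\max}$ and has a limit $u(T_{\max})$. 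Restarting the local existence step at $T_{\max}$ with this datum of norm at most $M$ extends the solution beyond $T_{\max}$, a contradiction. More simply, the uniform step $h$ associated with the compact interval $[s_0,T_{\max}]$ and the bound $M$ already forces the solution to extend past $T_{\max}$. Hence $T_{\max}=+\infty$. Finally, the integral representation shows $u\in W^{1,1}_{loc}([s_0,+\infty),X)$ with $\dot u(t)=g(t,u(t))$ for a.e.\ $t$.
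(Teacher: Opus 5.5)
The paper gives no proof of this statement: it is quoted from Haraux, and the paper only applies it with a modulus $K(t)$ that does not depend on $r$. Your argument is the standard Carath\'eodory--Picard proof behind that citation, and most of it is sound. Strong measurability of $s\mapsto g(s,u(s))$ follows from step-function approximation and the continuity in (ii). You get a contraction on short intervals from the absolute continuity of $\int P$ and $\int K(\cdot,2R)$, uniformly over starting times in a compact set. Uniqueness comes from an $L^1$-kernel Gronwall inequality, and the a priori bound from (iii) rules out blow-up.

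There is, however, one step that does not follow from (ii) as written. In the contraction estimate you invoke ``(ii) with $r=2R$'', and in the uniqueness step you use $K(s,r)$ with $r\ge \Vert u\Vert+\Vert w\Vert$. In both places you are bounding $K(s,\Vert u(s)\Vert+\Vert w(s)\Vert)$ by $K(s,2R)$, resp.\ $K(s,r)$. That requires $K(t,\cdot)$ to be nondecreasing, which (ii) does not assert. Without it, $s\mapsto K(s,\Vert u(s)\Vert+\Vert w(s)\Vert)$ need not be measurable or integrable.

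No cleverer argument avoids this, because the statement as literally written is false. Take $X=\mathbb{R}$ and $g(t,x)=\sqrt{|x-1|}$. Let $K(r)$ be the supremum of $|g(x)-g(y)|/|x-y|$ over $x\neq y$ with $|x|+|y|=r$. Two facts give the bound you need: $\bigl|\sqrt{|x-1|}-\sqrt{|y-1|}\bigr|\le |x-y|/\bigl(\sqrt{|x-1|}+\sqrt{|y-1|}\bigr)$, and $|x-1|+|y-1|\ge \bigl||x|+|y|-2\bigr|$. Together they yield $K(r)\le |r-2|^{-1/2}$ for $r\neq 2$, and checking the three sign cases directly gives $K(2)\le 1$. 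So (i)--(iii) hold with $K(t,r)=K(r)$ and $P\equiv 1$. Yet the problem $\dot u=\sqrt{|u-1|}$, $u(s_0)=1$, has two solutions: $u\equiv 1$ and $u(t)=1+(t-s_0)^2/4$. Along this pair, $K(s,\Vert u(s)\Vert+\Vert w(s)\Vert)=2/(s-s_0)$, which is not integrable at $s_0$. That is exactly where your Gronwall step breaks.

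The remedy is to state explicitly the hypothesis implicit in Haraux's formulation: $K(t,\cdot)$ is nondecreasing, or equivalently $K(\cdot,r)$ dominates the Lipschitz constant of $g(t,\cdot)$ on the ball of radius $r$. With that hypothesis every step of your proof goes through. The paper's application, where $K$ is independent of $r$, is unaffected.
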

We are now ready to present and prove the existence and uniqueness result for $(\text{NS-HR})_{\alpha,\beta}$.
\begin{theorem}
    Let $t_0>0$ be an arbitrary starting time. Suppose $\delta:[t_0,+\infty
    ) \to (0,+\infty)$ and $\gamma:[t_0,+\infty
    ) \to (0,+\infty)$ are continuously differentiable functions. Then given any pair $(x_0,v_0) \in \mathcal{H}^2$, there exists a unique solution trajectory $x \in W^{1,1}_{loc}([t_0,+\infty))$ that satisfies $(\text{NS-HR})_{\alpha,\beta}$ almost everywhere on $[t_0,+\infty)$ with the intial condition $(x(t_0),\dot{x}(t_0))=(x_0,v_0)$.
\end{theorem}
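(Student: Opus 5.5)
The proof goes through the first-order reformulation of Theorem~\ref{thm:first-order} followed by an application of Theorem~\ref{thm:existence_uniqueness} with $X=\mathcal{H}^2$, $s_0=t_0$ and $g=F$ as in \eqref{1stds}.

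\emph{Choice of initial data.} Given $(x_0,v_0)$, define $y_0$ by solving the compatibility condition \eqref{compatibility condition} for $y_0$:
\[
y_0:=-\beta v_0-\beta^2\delta(t_0)\nabla f_{\gamma(t_0)}(x_0)-\beta\Big(\frac{\alpha-1}{t_0}-\frac{1}{\beta}\Big)x_0 .
\]
This choice is unique, so $(x_0,v_0)$ and $(x_0,y_0)$ determine each other.

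\emph{Hypotheses of Theorem~\ref{thm:existence_uniqueness} for $F$.} Continuity of $F$ in $t$ (hence local integrability, item (i)) follows from two facts. First, $t\mapsto\delta(t)$, $1/t$ and the other coefficients are continuous on $[t_0,+\infty)$, with $t_0>0$. Second, $t\mapsto\nabla f_{\gamma(t)}(x)$ is continuous. For the second fact, I would use the resolvent identity or the standard estimate
\[
\|\nabla f_{\gamma}(x)-\nabla f_{\mu}(x)\|\le \frac{|\gamma-\mu|}{\gamma}\,\|\nabla f_\mu(x)\|
\]
(or simply continuity of $\gamma\mapsto \operatorname{prox}_{\gamma f}(x)$) together with continuity of $\gamma(\cdot)$.

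For the Lipschitz condition (ii), the map $x\mapsto \nabla f_{\gamma(t)}(x)$ is $1/\gamma(t)$-Lipschitz. Hence $F(t,\cdot)$ is globally Lipschitz with constant
\[
K(t)\le \frac{\beta\delta(t)}{\gamma(t)}+C\Big(1+\frac{1}{t}\Big),
\]
which is continuous since $\gamma>0$ is continuous, and therefore locally integrable. For the growth condition (iii), fix $\bar x\in\operatorname{argmin} f$. Since $\nabla f_{\gamma(t)}(\bar x)=0$, the Lipschitz bound gives $\|\nabla f_{\gamma(t)}(x)\|\le \|x-\bar x\|/\gamma(t)$. This yields $\|F(t,z)\|\le P(t)(1+\|z\|)$ with $P$ continuous, depending on $\|\bar x\|$, $\delta$, $\gamma$, $\beta$, $\alpha$ and $t$.

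\emph{Conclusion via Theorem~\ref{thm:first-order}.} Theorem~\ref{thm:existence_uniqueness} gives a unique locally absolutely continuous $z=(x,y)$ solving \eqref{eq:x}--\eqref{eq:y} a.e. with $z(t_0)=(x_0,y_0)$. Because the right-hand side $F(t,z(t))$ is in fact continuous in $t$, $z$ is $C^1$. Then \eqref{eq:x} shows that $\dot x+\beta\delta\nabla f_{\gamma}(x)$ is $C^1$, so $\ddot x+\beta\frac{\mathrm d}{\mathrm dt}[\delta\nabla f_\gamma(x)]$ makes sense. The equivalence (ii)$\Rightarrow$(i) of Theorem~\ref{thm:first-order} then shows that $x$ solves $(\text{NS-HR})_{\alpha,\beta}$ with $x(t_0)=x_0$ and $\dot x(t_0)=v_0$, the latter by the choice of $y_0$.

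For uniqueness, take two solutions $x_1,x_2$ with the same data. Applying (i)$\Rightarrow$(ii) to each gives $y_1,y_2$ with the same $y_0$, because $y_0$ is forced by \eqref{compatibility condition}. Uniqueness in Theorem~\ref{thm:existence_uniqueness} then gives $(x_1,y_1)=(x_2,y_2)$.

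\emph{Main obstacle.} The delicate point is the regularity bookkeeping. The equation involves $\frac{\mathrm d}{\mathrm dt}[\delta\nabla f_{\gamma}(x)]$ rather than $\ddot x$ alone, so the solution should be understood in the sense that $\dot x+\beta\delta\nabla f_\gamma(x)$ is absolutely continuous. Closely tied to this is the continuity of $t\mapsto\nabla f_{\gamma(t)}(x)$, which is needed to verify (i) of Theorem~\ref{thm:existence_uniqueness} and the continuity of $K$ and $P$. I would check this carefully via the prox estimate above.
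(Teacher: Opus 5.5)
Your proposal is correct and follows the paper's own route. You pass to the first-order system of Theorem~\ref{thm:first-order} and verify the hypotheses of Theorem~\ref{thm:existence_uniqueness} for $F$, with a Lipschitz modulus of order $\beta\delta(t)/\gamma(t)+C(1+1/t)$ and a linear-growth bound obtained through a minimizer of $f$. You also make explicit several points the paper leaves implicit: the choice of $y_0$, the continuity of $t\mapsto\nabla f_{\gamma(t)}(x)$ (your perturbation estimate is valid, and is in fact \eqref{eq:PP} without the factor $2$), the $C^1$ regularity of $z$, and the transfer of existence and uniqueness back through both implications of Theorem~\ref{thm:first-order}.
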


\begin{proof}
One can see that for all $(x,y) \in \mathcal{H}^2$, $F(\cdot,x,y)$ defined in Eq.~\eqref{1stds} is continuous on $[t_0,+\infty)$, and therefore $F(\cdot,x,y) \in L^1_{loc}([t_0,+\infty),\mathcal{H}^2)$. It remains to verify that $F$ satisfies the last two requirements of Theorem~\ref{thm:existence_uniqueness}. 

\textbf{Verify that $F$ satisfies the requirement \textit{(ii)} of Theorem \ref{thm:existence_uniqueness}} 

Given $(x,y)\in \mathcal{H}^2$ and $(u,v)\in \mathcal{H}^2$, we consider the difference 
\begin{align*}
    \Vert F(t,x,y)-F(t,u,v)\Vert =&\Big[\Big\Vert \beta\delta(t)\Big(\nabla f_{\gamma(t)}(u)-\nabla f_{\gamma(t)}(x)\Big) +\Big(\frac{\alpha-1}{t}-\frac{1}{\beta}\Big)(u-x)+\frac{1}{\beta}(v-y)\Big\Vert^2\\
    &+\Big\Vert \Big(\frac{1}{\beta}-\frac{\alpha-2}{t}\Big)(x-u)+\Big(\frac{1}{t}+\frac{1}{\beta}\Big)(v-y) \Big\Vert^2\Big]^{1/2}
\end{align*}
This equality, combined with the $\frac{1}{\gamma(t)}-$Lipschitz continuity of $\nabla f_{\gamma(t)}$ and the triangle inequality, implies 
\begin{align*}
\Vert F(t,x,y)-F(t,u,v)\Vert &\le \beta\frac{\delta(t)}{\gamma(t)}\Vert u-x \Vert + \Big\vert\frac{\alpha-1}{t}-\frac{1}{\beta}\Big\vert\Vert u-x\Vert+\frac{1}{\beta}\Vert v-y \Vert\\
&\hspace{5mm}+\Big\vert\frac{1}{\beta}-\frac{\alpha-2}{t}\Big\vert\Vert x-u \Vert 
+\Big\vert\frac{1}{t}+\frac{1}{\beta}\Big\vert\Vert v-y \Vert \\
&\le a(t)\Vert u-x \Vert +b(t) \Vert v-y \Vert,
\end{align*}
where we have conveniently set
\begin{align*}
&a(t)=\beta\frac{\delta(t)}{\gamma(t)} +\Big\vert \frac{\alpha-1}{t}-\frac{1}{\beta}\Big\vert+\Big\vert \frac{1}{\beta}-\frac{\alpha-2}{t}\Big\vert,\\
    &b(t)=\frac{1}{\beta}+\Big\vert\frac{1}{t}+\frac{1}{\beta}\Big\vert.
\end{align*}
We further obtain 
\begin{align*}
\Vert F(t,x,y)-F(t,u,v)\Vert \le K(t)\Vert (x,y)-(u,v) \Vert,
\end{align*}
where $K(t):=\Big(a(t)^2+b(t)^2\Big)^{1/2}$. 

As such, since $a \in L^1_{loc}([t_0,+\infty))$ and $b \in L^1_{loc}([t_0,+\infty))$, then $K \in L^1_{loc}([t_0,+\infty))$. 

\textbf{Verify that $F$ satisfies the requirement \textit{(iii)} of Theorem \ref{thm:existence_uniqueness}} 

Consider any minimizer of $f$, which we indicate as $x^*$. From the definition of $F$, we have 
\begin{align*}
    \Vert F(t,x,y) \Vert &\le \Big\Vert -\beta\delta(t)\nabla f_{\gamma(t)}(x)-\Big(\frac{\alpha-1}{t}-\frac{1}{\beta}\Big)x-\frac{1}{\beta}y \Big\Vert 
    +\Big\Vert\Big(\frac{1}{\beta}-\frac{\alpha-2}{t}\Big)x-\Big(\frac{1}{t}+\frac{1}{\beta}\Big)y \Big\Vert \\
    &\le \beta\frac{\delta(t)}{\gamma(t)}\Vert x-x^* \Vert+\Big(\Big\vert\frac{\alpha-1}{t}-\frac{1}{\beta}\Big\vert+\Big\vert\frac{1}{\beta}-\frac{\alpha-2}{t}\Big\vert\Big)\Vert x \Vert +\Big(\frac{1}{\beta}+\Big\vert\frac{1}{t}+\frac{1}{\beta}\Big\vert\Big)\Vert y \Vert \\
    &\le \beta\frac{\delta(t)}{\gamma(t)}\Vert x^*\Vert +a(t) \Vert x \Vert + b(t)\Vert y \Vert \\
    &\le \beta\frac{\delta(t)}{\gamma(t)}\Vert x^*\Vert +K(t) \Vert (x,y) \Vert.
\end{align*}
Set $P(t)=\max\big\{\beta\frac{\delta(t)}{\gamma(t)}\Vert x^*\Vert,K(t)\big\}$. We obtain 
\begin{align*}
   \Vert F(t,x,y) \Vert \le P(t)(1+\Vert (x,y)\Vert). 
\end{align*}
As such, $P \in L^1_{loc}([t_0,+\infty))$. The proof is thereby completed.  
\end{proof}

\section{Convergence properties of $(\text{NS-HR})_{\alpha,\beta}$}
Before going into the convergence analysis of $(\text{NS-HR})_{\alpha,\beta}$, we need to make the following assumption on the values of the coefficients.
\begin{Assumption}\label{ass:B}
Let $\alpha > 3$, and $\beta>0$.  
Let $\delta,\gamma:[t_0,+\infty)\to(0,+\infty)$ be $C^1$ functions satisfying:

\begin{enumerate}[label=(\roman*)]
    \item 
    $\displaystyle \liminf_{t\to +\infty} \frac{\dot{\gamma}(t)}{t\delta(t)} > 0.$

    \item
    $\displaystyle \frac{t\delta(t)}{\gamma(t)} = O(1/t).$

    \item
    $\displaystyle \frac{\dot{\gamma}(t)}{\gamma(t)} = O(1/t).$

    \item
    There exists $0<\zeta\le\alpha-3$ such that for all large $t$,
    \[
    0 \le  \frac{t\dot{\delta}(t)}{\delta(t)} \le \alpha - 3 - \zeta.
    \]
\end{enumerate}
\end{Assumption}
\begin{remark}
\label{remark 3.1}
Assumption~\ref{ass:B} is mild. For instance, it is verified with the polynomials
\[
\delta(t)=t^{p},\qquad \gamma(t)=ct^{p+2}\qquad (t\ge t_0>0),
\]
where $c>0$ and \(p\in[0,\alpha-3)\). Indeed:
\begin{itemize}
\item[(i)] Since \(\dot\gamma(t)=c(p+2)t^{p+1}\),
\[
\frac{\dot\gamma(t)}{t\delta(t)}
=\frac{c(p+2)t^{p+1}}{t\cdot t^{p}}
=c(p+2),
\]
and therefore
\[
\liminf_{t\to +\infty}\frac{\dot\gamma(t)}{t\delta(t)}=c(p+2)>0.
\]

\item[(ii)]
\[
\frac{t\delta(t)}{\gamma(t)}
=\frac{t\cdot t^{p}}{ct^{p+2}}
=\frac{1}{ct}
=O\!\left(\frac{1}{t}\right).
\]

\item[(iii)]
\[
\frac{\dot\gamma(t)}{\gamma(t)}
=\frac{c(p+2)t^{p+1}}{ct^{p+2}}
=\frac{p+2}{t}
=O\!\left(\frac{1}{t}\right).
\]

\item[(iv)] Since \(\dot\delta(t)=pt^{p-1}\),
\[
\frac{t\dot\delta(t)}{\delta(t)}
=\frac{t\cdot pt^{p-1}}{t^{p}}
=p.
\]
Choose, for instance,
\[
\zeta:=\frac{\alpha-3-p}{2}\in(0,\alpha-3),
\]
so that \(0\le p\le \alpha-3-\zeta\) for all large \(t\).
\end{itemize}   
\end{remark}

Since $\text{argmin}_{\mathcal{H}}f \neq \emptyset$, we fix some $x^* \in \text{argmin}_{\mathcal{H}}f$. For the convergence analysis of $(\text{NS-HR})_{\alpha,\beta}$, we employ the Lyapunov function $\mathcal{V}:=\mathcal{V}_1+\mathcal{V}_{\sigma,\eta}$, where  
\begin{align}
&\mathcal{V}_1(t)=a(t)\Big(f_{\gamma(t)}(x(t))-f^*\Big),\label{V.1}\\
&\mathcal{V}_{\sigma,\eta}(t)=\frac{1}{2}\Big\Vert \sigma(x(t)-x^*)+t\dot{x}(t)+\beta t\delta(t)\nabla f_{\gamma(t)}(x(t)) \Big\Vert^2 +\eta\Vert x(t)-x^*\Vert^2, \label{V.2}
\end{align}
with $0<\sigma<\alpha-1$ being chosen later, and the coefficients $a$ and $\eta$ being defined as follows
\begin{align*}
    &a(t)=\Big[t-\beta(\sigma+1-\alpha)\Big]t\delta(t),\\
&\eta=\frac{\sigma(\alpha-\sigma-1)}{2}.
\end{align*}

We will need the following technical lemma for the convergence analysis. 
\begin{lemma}
\label{lemma: derivative-gradient}
Suppose that $\delta, \gamma:[t_0,+\infty) \to (0,+\infty)$, and $x:[t_0,+\infty)\to \mathcal{H}$ are of class $C^1$. Suppose that $f:\mathcal{H}\to\mathbb{R}\cup\{+\infty\}$ is proper, convex, and lower semicontinuous. Then, we have 
\begin{align*}
    \Big\Vert \frac{\mathrm{d}}{\mathrm{d}t}[\delta(t)\nabla f_{\gamma(t)}(x(t))] \Big \Vert \le \frac{\delta(t)}{\gamma(t)}\Vert \dot{x}(t) \Vert+\Big[\frac{2\delta(t)\vert \dot{\gamma}(t)\vert}{\gamma(t)^2}+\frac{\vert \dot{\delta}(t)\vert}{\gamma(t)}\Big]\Vert x(t)-x^* \Vert. 
\end{align*}
\end{lemma}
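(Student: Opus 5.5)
The plan is to estimate the difference quotient of $u(t):=\delta(t)\nabla f_{\gamma(t)}(x(t))$ directly. This avoids assuming any differentiability of $\nabla f_{\gamma}$ in $x$, which need not hold for nonsmooth $f$. I read $\Vert \frac{\mathrm{d}}{\mathrm{d}t} u(t)\Vert$ as $\limsup_{h\to0}\Vert u(t+h)-u(t)\Vert/|h|$. This equals the norm of the derivative wherever the derivative exists, and $u$ is locally Lipschitz by the estimates below, so the bound also holds a.e. For fixed $t$ and small $h$, I split
\begin{align*}
u(t+h)-u(t)=&\;\big(\delta(t+h)-\delta(t)\big)\nabla f_{\gamma(t+h)}(x(t+h))\\
&+\delta(t)\big[\nabla f_{\gamma(t+h)}(x(t+h))-\nabla f_{\gamma(t+h)}(x(t))\big]\\
&+\delta(t)\big[\nabla f_{\gamma(t+h)}(x(t))-\nabla f_{\gamma(t)}(x(t))\big].
\end{align*}
Each of the three pieces is handled by a separate elementary property of the Moreau envelope, and then I divide by $|h|$ and let $h\to0$.

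\textbf{Pieces one and two.} Both use the facts recalled in the introduction. Since $x^*\in\operatorname{argmin} f=\operatorname{argmin} f_\gamma$, we have $\nabla f_\gamma(x^*)=0$, so the $\frac1\gamma$-Lipschitz property gives $\Vert\nabla f_\gamma(z)\Vert\le \Vert z-x^*\Vert/\gamma$ for every $z$. Applied to the first piece, and using continuity of $\gamma$ and $x$, this produces the term $\frac{|\dot\delta(t)|}{\gamma(t)}\Vert x(t)-x^*\Vert$ in the limit. The Lipschitz property itself handles the second piece and produces $\frac{\delta(t)}{\gamma(t)}\Vert\dot x(t)\Vert$.

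\textbf{Piece three, the dependence on the smoothing parameter.} This is the only nonstandard step and the one I expect to be the main obstacle. I would prove the following: for $\lambda,\mu>0$ and any $z$,
$$\Vert\nabla f_\lambda(z)-\nabla f_\mu(z)\Vert\le \frac{|\lambda-\mu|}{\lambda}\Vert \nabla f_\mu(z)\Vert\le \frac{|\lambda-\mu|}{\lambda\mu}\Vert z-x^*\Vert .$$
To prove it, set $a=\nabla f_\lambda(z)$ and $b=\nabla f_\mu(z)$. Then $a\in\partial f(z-\lambda a)$ and $b\in\partial f(z-\mu b)$. Monotonicity of $\partial f$ gives $\langle a-b,\mu b-\lambda a\rangle\ge0$. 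Writing $\mu b-\lambda a=-\lambda(a-b)+(\mu-\lambda)b$, this becomes
$$\lambda\Vert a-b\Vert^2\le (\mu-\lambda)\langle a-b,b\rangle\le|\mu-\lambda|\,\Vert a-b\Vert\,\Vert b\Vert,$$
which is the claim after dividing by $\lambda\Vert a-b\Vert$.

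Taking $\lambda=\gamma(t+h)$, $\mu=\gamma(t)$ and passing to the limit, the third piece contributes $\frac{\delta(t)|\dot\gamma(t)|}{\gamma(t)^2}\Vert x(t)-x^*\Vert$. This is already bounded by the coefficient $\frac{2\delta(t)|\dot\gamma(t)|}{\gamma(t)^2}$ in the statement, so the factor $2$ leaves slack. It would also accommodate a cruder argument that compares the two proximal points through the resolvent identity, if one prefers that route. Summing the three limits yields the asserted inequality.
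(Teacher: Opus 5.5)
Your proof is correct, and its skeleton matches the paper's. The paper derives this lemma from Lemma~\ref{lem:derivative of moreau}, whose Step~3 splits the difference quotient into the same three pieces: a $\delta$-increment, an $x$-increment at a frozen parameter, and a parameter increment at a frozen point. You freeze $\gamma(t+h)$ and $x(t)$ where the paper freezes $\gamma(t)$ and $x(t+h)$. The pieces are controlled, as in your proposal, by the $\frac1\gamma$-Lipschitz bound and $\Vert\nabla f_\gamma(z)\Vert\le\Vert z-x^*\Vert/\gamma$.

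The real difference is the parameter-perturbation estimate. The paper first proves the resolvent identity $J_{\alpha A}=J_{\beta A}\big(\tfrac{\beta}{\alpha}I+(1-\tfrac{\beta}{\alpha})J_{\alpha A}\big)$. It then combines nonexpansiveness of the resolvent with a triangle inequality, which costs a factor $2$, namely $\Vert A_\beta(x)-A_\alpha(x)\Vert\le\frac{2|\beta-\alpha|}{\alpha}\Vert A_\beta(x)\Vert$; this is where the $2$ in the statement comes from. Your direct monotonicity argument between the two proximal points is shorter and uses only monotonicity, so it works verbatim for a maximally monotone $A$ as well. It also yields the constant $1$, so you in fact prove the lemma with $\delta|\dot\gamma|/\gamma^2$ in place of $2\delta|\dot\gamma|/\gamma^2$.

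Your reading of the derivative norm as an upper limit of difference quotients is also more careful than the paper's ``divide by $|h|$ and let $h\to0$''. That step tacitly assumes differentiability, which can fail at isolated times: for $f=|\cdot|$ on $\mathbb{R}$, $\gamma\equiv\delta\equiv1$, $x(t)=t$, the map $t\mapsto\nabla f_1(x(t))$ has a kink at $t=1$. For your remark that the bound then holds a.e., the fact to cite is that locally Lipschitz maps from an interval into a Hilbert space are differentiable almost everywhere (Radon--Nikodym property).
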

\begin{proof}
The lemma is a direct corollary of the more general Lemma \ref{lem:derivative of moreau}, which we will prove later. 
\end{proof}

We are now in the position to present the convergence properties of $(\text{NS-HR})_{\alpha,\beta}$.
\begin{theorem}
\label{first convergence result}
Under the standing Assumption~\ref{ass:B}, and for any solution trajectory $x:[t_0,+\infty) \to \mathcal{H}$ to $(\text{NS-HR})_{\alpha,\beta}$, we have 
\begin{enumerate}[label=(\roman*)]
               \item $\displaystyle f_{\gamma(t)}(x(t))-f^*=o\Big(\frac{1}{t^2\delta(t)}\Big).$
        \item $\displaystyle \Vert \nabla f_{\gamma(t)}(x(t))\Vert=o\Big(\frac{1}{t^2\delta(t)}\Big).$
        \item $\Vert \dot{x}(t) \Vert = o(\frac{1}{t})$.
        \item $\displaystyle \int_{t_0}^{+\infty}t\Vert \dot{x}(t)\Vert^2\mathrm{d}t < +\infty$. 
        \item $\displaystyle \int_{t_0}^{+\infty}t^3\delta(t)^2\Vert \nabla f_{\gamma(t)}(x(t))\Vert^2 \mathrm{d}t <+\infty$.
        \item if in addition $\liminf_{t\to +\infty}\frac{t^2\delta(t)}{\gamma(t)}>0$, then $x(t)$ converges weakly as $t \to +\infty$, and its limit belongs to $\text{argmin}_{\mathcal{H}}f$.
    \end{enumerate}
\end{theorem}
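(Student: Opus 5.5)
The plan is a Lyapunov argument with $\mathcal{V}=\mathcal{V}_1+\mathcal{V}_{\sigma,\eta}$, where $0<\sigma<\alpha-1$ will be fixed close to $\alpha-1$ (or at a value tuned against $\zeta$).

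\textbf{Step 1: differentiate $\mathcal{V}_{\sigma,\eta}$.} Put $w(t)=\sigma(x-x^*)+t\dot x+\beta t\delta\nabla f_{\gamma}(x)$. Using $(\text{NS-HR})_{\alpha,\beta}$ (equivalently \eqref{eq:x}--\eqref{eq:y}), the Hessian terms cancel:
\[
\dot w=(\sigma+1-\alpha)\dot x-t\delta\nabla f_{\gamma}(x)+\beta\delta\nabla f_\gamma(x)-\beta\delta\nabla f_\gamma(x)=(\sigma+1-\alpha)\dot x-t\delta\nabla f_{\gamma(t)}(x).
\]
Expanding $\langle w,\dot w\rangle+2\eta\langle x-x^*,\dot x\rangle$, the choice $\eta=\sigma(\alpha-\sigma-1)/2$ kills the cross term $\langle x-x^*,\dot x\rangle$. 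What remains is
\[
-(\alpha-1-\sigma)t\|\dot x\|^2-\beta t^2\delta^2\|\nabla f_\gamma\|^2-\sigma t\delta\langle\nabla f_\gamma,x-x^*\rangle-t^2\delta\langle\nabla f_\gamma,\dot x\rangle-\beta(\alpha-1-\sigma)t\delta\langle\nabla f_\gamma,\dot x\rangle.
\]

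\textbf{Step 2: differentiate $\mathcal{V}_1$.} We have $\tfrac{d}{dt}f_{\gamma(t)}(x)=\langle\nabla f_\gamma,\dot x\rangle-\tfrac{\dot\gamma}{2}\|\nabla f_\gamma\|^2$. The factor $a(t)=(t+\beta(\alpha-1-\sigma))t\delta$ is chosen exactly so that the two $\langle\nabla f_\gamma,\dot x\rangle$ terms above cancel. Using convexity, $\langle\nabla f_\gamma,x-x^*\rangle\ge f_\gamma-f^*$, so
\[
\dot{\mathcal V}\le\big(\dot a-\sigma t\delta\big)(f_\gamma-f^*)-(\alpha-1-\sigma)t\|\dot x\|^2-\Big(\beta t^2\delta^2+\tfrac{a\dot\gamma}{2}\Big)\|\nabla f_\gamma\|^2.
\]
Assumption (iv) gives $\dot a\le(\alpha-1-\zeta)t\delta+O(\delta)$, so taking $\sigma\in(\alpha-1-\zeta,\alpha-1)$ makes the first coefficient $\le -c\,t\delta$ for large $t$. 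Hence $\mathcal V$ is nonincreasing eventually. Integrating gives $\int t\|\dot x\|^2<\infty$, which is (iv), and $\int t^3\delta^2\|\nabla f_\gamma\|^2\lesssim\int a\dot\gamma\|\nabla f_\gamma\|^2<\infty$ by Assumption (i), which is (v). We also get $\int t\delta(f_\gamma-f^*)<\infty$, boundedness of $x$, and $f_\gamma-f^*=O(1/(t^2\delta))$.

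\textbf{Step 3: upgrade $O$ to $o$ and get (ii), (iii).} This is the hard part. I would run the classical Attouch--Cabot/Attouch--Peypouquet argument. Consider $h(t)=\tfrac12 t^2\|\dot x\|^2+a(f_\gamma-f^*)$, or rather the energy with $\sigma$ replaced by two different admissible values and subtract the two. This shows that $\lim\mathcal V$ exists for every admissible $\sigma$ and that $t\langle x-x^*,\dot x+\beta\delta\nabla f_\gamma\rangle$ and $\|t\dot x+\beta t\delta\nabla f_\gamma\|^2$ have limits. The integrability of $t\|\dot x\|^2$, $t\delta(f_\gamma-f^*)$ and $t^3\delta^2\|\nabla f_\gamma\|^2$ then forces these limits to be zero (a limit $\ell>0$ of $u(t)$ with $\int u/t<\infty$ is impossible). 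This yields $t^2\delta(f_\gamma-f^*)\to0$ and $\|t\dot x+\beta t\delta\nabla f_\gamma\|\to0$.

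For (ii), I would use the bound $\|\nabla f_\gamma(x)\|^2\le\tfrac{2}{\gamma}(f_\gamma-f^*)$. With (i) this gives $\|\nabla f_\gamma\|=o\big((\gamma t^2\delta)^{-1/2}\big)$. Assumption (ii) says $\gamma\gtrsim t^2\delta$, so this gives $o(1/(t^2\delta))$. Then $t\delta\|\nabla f_\gamma\|=o(1/t)\to0$, which combined with the previous limit gives $t\|\dot x\|\to0$, i.e.\ (iii). Controlling $\dot\gamma$ and $\dot\delta$ inside the derivative of the gradient via Lemma \ref{lemma: derivative-gradient} and Assumption (iii) handles the differentiability issues in these limit arguments. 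I expect the delicate point to be the sign bookkeeping with $\dot a$ versus $\sigma t\delta$.

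\textbf{Step 4: weak convergence (vi).} I would apply Opial's lemma. The limit of $\|x-x^*\|$ exists by the $\sigma$-difference argument above, via $q(t)=\tfrac12\|x-x^*\|^2$ satisfying $t\dot q+\beta t\delta\langle\nabla f_\gamma,x-x^*\rangle\to$ a limit, plus integrability. For weak cluster points $\bar x$, note that $\mathrm{prox}_{\gamma f}(x)=x-\gamma\nabla f_\gamma(x)$ and $f(\mathrm{prox}_{\gamma f}x)\le f_\gamma(x)\to f^*$. The extra hypothesis $\liminf t^2\delta/\gamma>0$ gives $\gamma\|\nabla f_\gamma\|=o(\gamma/(t^2\delta))$, which tends to $0$, so the prox shadow has the same weak cluster points. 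Lower semicontinuity of $f$ then gives $\bar x\in\operatorname{argmin}f$.
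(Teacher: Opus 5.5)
Your Steps 1--2 match the paper's Lyapunov computation: the same $a(t)$, the same $\eta$, and the same choice $\sigma\in(\alpha-1-\zeta,\alpha-1)$. They give (iv), (v), boundedness of $x$ and $f_{\gamma(t)}(x(t))-f^*=O(1/(t^2\delta(t)))$ in the same way.

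From there you take a different route, and it is valid. The paper first shows $\Vert\dot x\Vert=O(1/t)$. It then uses Lemma~\ref{lemma: derivative-gradient} with Assumption~\ref{ass:B}(ii)--(iv) to get $\Vert\frac{\mathrm{d}}{\mathrm{d}t}[\delta\nabla f_{\gamma}(x)]\Vert=O(t^{-3})$, hence $t^3\Vert\ddot x\Vert^2\in L^1$. Next it shows that $t^2\Vert\dot x\Vert^2$ and $\Vert t^2\delta\nabla f_\gamma(x)\Vert^2$ converge because their derivatives are dominated by $L^1$ functions; this gives (iii) and (ii). Only then does it deduce (i) from $f_\gamma(x)-f^*\le\Vert\nabla f_\gamma(x)\Vert\,\Vert x-x^*\Vert$. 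For Opial it uses $t\ddot h+\alpha\dot h+\theta\le k$ and weak--strong closedness of the graph of $\partial f$.

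You instead proceed in the reverse order:
\begin{itemize}
\item (i) first, from the two-$\sigma$ trick;
\item (ii) from $\Vert\nabla f_\gamma(x)\Vert^2\le\frac{2}{\gamma}(f_\gamma(x)-f^*)$, which is immediate from $f_\gamma(x)=f(p)+\frac{\gamma}{2}\Vert\nabla f_\gamma(x)\Vert^2$, together with $\gamma\gtrsim t^2\delta$;
\item (iii) from $\Vert t\dot x+\beta t\delta\nabla f_\gamma\Vert\to0$;
\item cluster points via lower semicontinuity of $f$ at the prox shadow.
\end{itemize}
This is shorter. Contrary to your closing remark in Step 3, it never differentiates $\delta\nabla f_\gamma(x)$: neither Lemma~\ref{lemma: derivative-gradient} nor Assumption~\ref{ass:B}(iii) is used, so your route proves the theorem under weaker hypotheses. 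The paper's route buys extra information ($t^3\Vert\ddot x\Vert^2\in L^1$ and the $O(t^{-3})$ bound on the Hessian-damping term). Also, its $O$-to-$o$ upgrade for the gradient uses only integral estimates and Lipschitz bounds, not function values. It is therefore reused verbatim for the Yosida approximation in Section~4, where your inequality has no analogue.

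One piece of bookkeeping must be corrected, because Step 4 as written does not go through. Set $q=\frac12\Vert x-x^*\Vert^2$ and $u=t\dot x+\beta t\delta\nabla f_\gamma(x)$. Expanding gives $\mathcal{V}^\sigma=W_\sigma+\sigma\Phi$, where $W_\sigma=a_\sigma(f_\gamma(x)-f^*)+\frac12\Vert u\Vert^2$ and $\Phi=t\dot q+(\alpha-1)q+\beta t\delta\langle\nabla f_\gamma(x),x-x^*\rangle$. Since $a_{\sigma_1}-a_{\sigma_2}=\beta(\sigma_2-\sigma_1)t\delta$ and $t\delta(f_\gamma-f^*)=O(1/t)$, subtracting two energies shows that $\Phi$ converges, hence so does $W_\sigma$.

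It does not give separate limits for $t\dot q+\beta t\delta\langle\cdot,\cdot\rangle$ and $\Vert u\Vert^2$. Your integrability argument must be applied to $W_\sigma$. This works because $W_\sigma/t\lesssim t\delta(f_\gamma-f^*)+t\Vert\dot x\Vert^2+\beta^2t\delta^2\Vert\nabla f_\gamma\Vert^2$, and $W_\sigma\to0$ then yields both (i) and $\Vert u\Vert\to0$.

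In Step 4 the term $(\alpha-1)q$ is essential. Knowing only that $t\dot q$ has a limit, plus boundedness of $q$, forces that limit to be $0$ but does not make $q$ converge. None of your integral estimates gives $\dot q\in L^1$. The fix: since $\beta t\delta\langle\nabla f_\gamma(x),x-x^*\rangle\to0$, you get $t\dot q+(\alpha-1)q\to\ell$. Integrating $\frac{\mathrm{d}}{\mathrm{d}t}(t^{\alpha-1}q)=t^{\alpha-2}\big(t\dot q+(\alpha-1)q\big)$ then gives $q\to\ell/(\alpha-1)$, which is the first Opial condition.
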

\begin{proof}
The proof proceeds by defining a suitable Lyapunov function and computing its derivative. By using the standing assumptions, we will then be able to bound key terms and prove items \textit{(iv)} and \textit{(v)}. We then use these bounds to prove the rates for the velocity \textit{(iii)}, the gradient \textit{(ii)}, and the functional gap \textit{(i)}. We then conclude by leveraging Opial's lemma to prove weak convergence of the trajectory \textit{(vi)}. 

As announced, we consider the following Lyapunov function $\mathcal{V}(t):= \mathcal{V}_1(t) + \mathcal{V}_{\sigma, \eta}(t)$ as defined in Equations~\eqref{V.1}-\eqref{V.2}. Along the trajectory $x(t)$, the Lyapunov function is nonnegative for sufficiently large $t$. We compute the derivative of $\mathcal{V}_1$ \begin{align*}
\dot{\mathcal{V}}_1(t)=\dot{a}(t)\Big(f_{\gamma(t)}(x(t))-f^*\Big)+a(t)\frac{\mathrm{d}}{\mathrm{d}t}f_{\gamma(t)}(x(t)).
\end{align*}
By the chain rule 
\begin{align*}
\frac{\mathrm{d}}{\mathrm{d}t}f_{\gamma(t)}(x(t)) =\dot{\gamma}(t)\left.\frac{\mathrm{d}}{\mathrm{d}\gamma}f_{\gamma}(x(t))\right|_{\gamma=\gamma(t)}+\Big\langle \dot{x}(t),\nabla f_{\gamma(t)}(x(t))
\Big\rangle.
\end{align*}
We now recall a result concerning the derivative of the Moreau envelope $f_{\gamma} $  with respect to $\gamma$ \cite{AttouchCabot2018JDE} 
\begin{align*}
    \frac{\mathrm{d}}{\mathrm{d}\gamma}f_{\gamma}(x)=-\frac{1}{2}\Vert \nabla f_{\gamma}(x) \Vert^2 \quad \forall x \in \mathcal{H.}
\end{align*}
As a result, 
\begin{align}
\label{derivative-lyapunov-first-compo}
\dot{\mathcal{V}}_1(t)=\dot{a}(t)\Big(f_{\gamma(t)}(x(t))-f^*\Big)-\frac{\dot{\gamma}(t)a(t)}{2}\Vert \nabla f_{\gamma(t)}(x(t)) \Vert^2+a(t)\Big\langle \dot{x}(t),\nabla f_{\gamma(t)}(x(t)).
\Big\rangle
\end{align}
We now turn to computing the derivative $\mathcal{V}_{\sigma,\eta}$. We have
\begin{align}  
\label{derivative-lyapunov-opt}
\dot{\mathcal{V}}_{\sigma,\eta}(t)=&\Big \langle \sigma(x(t)-x^*)+t\dot{x}(t)+\beta t\delta(t)\nabla f_{\gamma(t)}(x(t)), (\sigma+1)\dot{x}(t)+t\ddot{x}(t)+\beta\frac{\mathrm{d}}{\mathrm{d}t}\Big[t\delta(t)\nabla f_{\gamma(t)}(x(t))\Big] \Big\rangle \notag\\
    &+2\eta\langle x(t)-x^*,\dot{x}(t) \rangle. 
\end{align}
From the expression of $(\text{NS-HR})_{\alpha,\beta}$, we can express the term $t \ddot{x}(t)$ as
\begin{align*}
    t\ddot{x}(t)=-\alpha\dot{x}(t)-\beta t\frac{\mathrm{d}}{\mathrm{d}t}\Big[\delta(t)\nabla f_{\gamma(t)}(x(t))\Big]-(t+\beta)\delta(t)\nabla f_{\gamma(t)}(x(t)),
\end{align*}
and by substituting this equivalence into \eqref{derivative-lyapunov-opt} and simplifying, we obtain
\begin{align*}
\dot{\mathcal{V}}_{\sigma,\eta}(t)=&\Big \langle \sigma(x(t)-x^*)+t\dot{x}(t)+\beta t\delta(t)\nabla f_{\gamma(t)}(x(t)), (\sigma+1-\alpha)\dot{x}(t)-t\delta(t)\nabla f_{\gamma(t)}(x(t)) \Big\rangle \\
    &+2\eta\langle x(t)-x^*,\dot{x}(t) \rangle. 
\end{align*}
Further algebraic manipulations yield 
\begin{align*}
\dot{\mathcal{V}}_{\sigma,\eta}(t)=&\Big[\sigma(\sigma+1-\alpha)+2\eta\Big]\langle x(t)-x^*,\dot{x}(t) \rangle +t(\sigma+1-\alpha)\Vert \dot{x}(t) \Vert^2 \\
    &+\Big[\beta(\sigma+1-\alpha)-t\Big]t\delta(t)\Big\langle \nabla f_{\gamma(t)}(x(t)),\dot{x}(t) \Big\rangle -\sigma t\delta(t) \Big \langle\nabla f_{\gamma(t)}(x(t)),x(t)-x^{*} \Big \rangle \\
    &-\beta t^2\delta(t)^2\Big\Vert \nabla f_{\gamma(t)}(x(t)) \Big \Vert^2.
\end{align*}
From the convexity of $f_{\gamma(t)}$, we know that
\begin{align*}
\Big \langle\nabla f_{\gamma(t)}(x(t)),x(t)-x^{*} \Big \rangle \geq f_{\gamma(t)}(x(t))-f^*. 
\end{align*}
As such, we can upper bound $\dot{\mathcal{V}}_{\sigma,\eta}(t)$ as 
\begin{align}
    \label{derivative-lyapunov-second-compo}
    \begin{split}
     \dot{\mathcal{V}}_{\sigma,\eta}(t) \le& \Big[\sigma(\sigma+1-\alpha)+2\eta\Big]\langle x(t)-x^*,\dot{x}(t) \rangle +t(\sigma+1-\alpha)\Vert \dot{x}(t) \Vert^2 \\
    &+\Big[\beta(\sigma+1-\alpha)-t\Big]t\delta(t)\Big\langle \nabla f_{\gamma(t)}(x(t)),\dot{x}(t) \Big\rangle  -\sigma t\delta(t) \Big(f_{\gamma(t)}(x(t))-f^*\Big) \\
    &-\beta t^2\delta(t)^2\Big\Vert \nabla f_{\gamma(t)}(x(t)) \Big \Vert^2.   
    \end{split}
\end{align}
By adding the expressions~\eqref{derivative-lyapunov-first-compo} with \eqref{derivative-lyapunov-second-compo}, we arrive at the bound,
\begin{align*}
\dot{\mathcal{V}}(t) \le & \Big(\dot{a}(t)-\sigma t\delta(t)\Big)\Big(f_{\gamma(t)}(x(t))-f^*\Big)+\Big[\sigma(\sigma+1-\alpha)+2\eta\Big]\langle x(t)-x^*,\dot{x}(t) \rangle \\
&+t(\sigma+1-\alpha)\Vert \dot{x}(t) \Vert^2-\Big(\beta t^2\delta(t)^2+\frac{\dot{\gamma}(t)a(t)}{2}\Big)\Vert \nabla f_{\gamma(t)}(x(t)) \Vert^2\\
&+\Big\{a(t)+\Big[\beta(\sigma+1-\alpha)-t\Big]t\delta(t) \Big\}\Big\langle \nabla f_{\gamma(t)}(x(t)),\dot{x}(t) \Big\rangle.
\end{align*}
Substituting now the definition of $a(t)$ and $\eta$, 
\begin{align*}
    \dot{\mathcal{V}}(t) \le  \Big(\dot{a}(t)-\sigma t\delta(t)\Big)\Big(f_{\gamma(t)}(x(t))-f^*\Big) 
+t(\sigma+1-\alpha)\Vert \dot{x}(t) \Vert^2\\
-\Big(\beta t^2\delta(t)^2+\frac{\dot{\gamma}(t)a(t)}{2}\Big)\Vert \nabla f_{\gamma(t)}(x(t)) \Vert^2.
\end{align*}
From Assumption \ref{ass:B}\textit{(iv)}, one can easily show that for a suitably chosen $\sigma \in (0,\alpha-1)$, we have $\dot{a}(t)-\sigma t\delta(t) \le 0$ for sufficiently large $t$. Indeed, 
\[
a(t)=\big[t-\beta(\sigma+1-\alpha)\big]\,t\,\delta(t)
=\big(t^2-\beta(\sigma+1-\alpha)t\big)\delta(t).
\]
Differentiating, we obtain
\[
\dot a(t)
=\big(2t-\beta(\sigma+1-\alpha)\big)\delta(t)
+\big(t^2-\beta(\sigma+1-\alpha)t\big)\dot\delta(t).
\]
Hence
\[
\dot a(t)-\sigma t\delta(t)
=
\Big((2-\sigma)t-\beta(\sigma+1-\alpha)\Big)\delta(t)
+\big(t^2-\beta(\sigma+1-\alpha)t\big)\dot\delta(t).
\]
Dividing by \(t\delta(t)\), we get
\[
\frac{\dot a(t)-\sigma t\delta(t)}{t\delta(t)}
=
(2-\sigma)-\frac{\beta(\sigma+1-\alpha)}{t}
+\Big(t-\beta(\sigma+1-\alpha)\Big)\frac{\dot\delta(t)}{\delta(t)}.
\]
Equivalently,
\[
\frac{\dot a(t)-\sigma t\delta(t)}{t\delta(t)}
=
(2-\sigma)-\frac{\beta(\sigma+1-\alpha)}{t}
+\left(1-\frac{\beta(\sigma+1-\alpha)}{t}\right)\frac{t\dot\delta(t)}{\delta(t)}.
\]

By Assumption \ref{ass:B}\textit{(iv)}, there exists \(0<\zeta\le\alpha-3\) such that for all sufficiently large \(t\),
\[
0\le \frac{t\dot\delta(t)}{\delta(t)}\le \alpha-3-\zeta.
\]
Therefore, for all sufficiently large \(t\),
\[
\frac{\dot a(t)-\sigma t\delta(t)}{t\delta(t)}
\le
(2-\sigma)-\frac{\beta(\sigma+1-\alpha)}{t}
+\left(1-\frac{\beta(\sigma+1-\alpha)}{t}\right)(\alpha-3-\zeta).
\]
Letting \(t\to+\infty\), the right-hand side tends to
\[
(2-\sigma)+(\alpha-3-\zeta)=\alpha-1-\zeta-\sigma.
\]
Hence, if we choose
\[
\sigma>\alpha-1-\zeta,
\]
then
\[
\alpha-1-\zeta-\sigma<0.
\]
Since \(0<\zeta\le\alpha-3\), the interval
\[
(\alpha-1-\zeta,\alpha-1)
\]
is nonempty. Thus we may choose
\[
\sigma\in(\alpha-1-\zeta,\alpha-1)\subset(0,\alpha-1).
\]
For such a choice of \(\sigma\), we obtain
\[
\frac{\dot a(t)-\sigma t\delta(t)}{t\delta(t)}\le 0
\]
for all sufficiently large \(t\). Since \(t\delta(t)>0\) for all \(t\), it follows that
\[
\dot a(t)-\sigma t\delta(t)\le 0
\]
for all sufficiently large \(t\).\\ 
As a result, for sufficiently large $t$,
\begin{align}
\label{derivative-lyapunov-less-than-xdot-gradient}
    \dot{\mathcal{V}}(t) \le  
t(\sigma+1-\alpha)\Vert \dot{x}(t) \Vert^2
-\Big(\beta t^2\delta(t)^2+\frac{\dot{\gamma}(t)a(t)}{2}\Big)\Vert \nabla f_{\gamma(t)}(x(t)) \Vert^2.
\end{align}
We have $\liminf_{t \to +\infty}\frac{\dot{\gamma}(t)}{t\delta(t)}>0$ by Assumption \ref{ass:B}\textit{(i)}, $\delta(t)>0 \quad \forall t \geq t_0$, and $a(t)>0$ for sufficiently large $t$. With these in mind and the fact that $\mathcal{V}(t) \geq 0$ for sufficiently large $t$, we can integrate \eqref{derivative-lyapunov-less-than-xdot-gradient} and obtain 
\begin{align}
\label{first integral estimate}
    &\int_{t_0}^{+\infty} t\Vert \dot{x}(t)\Vert^2 \mathrm{d}t <+\infty, \\
    \label{almost second integral estimate}
    &\int_{T}^{+\infty} \Big(\beta t^2\delta(t)^2+\frac{\dot{\gamma}(t)a(t)}{2}\Big)\Vert \nabla f_{\gamma(t)}(x(t)) \Vert^2\mathrm{d}t <+\infty \quad 
    \text{for some } T>0 \text{ large enough}. 
\end{align}
Here \eqref{first integral estimate} is exactly \emph{item \textit{(iv)}}. \\
We have
\begin{align*}
\beta t^2\delta(t)^2+\frac{\dot{\gamma}(t)a(t)}{2}=\beta t^2\delta(t)^2+\frac{\dot{\gamma}(t)[t-\beta(\sigma+1-\alpha)]t\delta(t)}{2}.  
\end{align*}
As a result, 
\begin{align*}
    \liminf_{t \to +\infty} \frac{\beta t^2\delta(t)^2+\frac{\dot{\gamma}(t)a(t)}{2}}{t^3 \delta(t)^2}&=\liminf_{t \to +\infty} \frac{\beta}{t}+\frac{\dot{\gamma}(t)[t-\beta(\sigma+1-\alpha)]}{2t^2\delta(t)}\\
    &=\liminf_{t \to +\infty}\frac{\dot{\gamma}(t)}{2t\delta(t)} > 0 \quad (\text{by Assumption \ref{ass:B}\textit{(i)}}).
\end{align*}
Combining this with \eqref{almost second integral estimate} gives 
\begin{align}
\label{ine: item iv}
\displaystyle \int_{t_0}^{+\infty}t^3\delta(t)^2\Vert \nabla f_{\gamma(t)}(x(t))\Vert^2 \mathrm{d}t <+\infty, 
\end{align}
which \emph{proves \textit{(v)}}. \\
From the upper bound on the Lyapunov derivative~\eqref{derivative-lyapunov-less-than-xdot-gradient}, and the reasoning above, we infer that $\mathcal{V}$ is also upper bounded. Moreover, from the definition of the Lyapunov function $\mathcal{V}$, we can further imply that 
\begin{align}
    &\sup_{t \geq t_0} \Vert x(t) \Vert <+\infty, \\
    &\sup_{t\geq t_0} \Vert t\dot{x}(t)+\beta t\delta(t)\nabla f_{\gamma(t)}(x(t))\Vert <+\infty. \label{dummy00}
\end{align}

We can then use the triangle inequality as,
\begin{align*}
\Vert t\dot{x}(t)\Vert \le \Vert t\dot{x}(t)+\beta t\delta(t)\nabla f_{\gamma(t)}(x(t))\Vert + \Vert -\beta t\delta(t)\nabla f_{\gamma(t)}(x(t)) \Vert.
\end{align*}
By the $\frac{1}{\gamma(t)}$-Lipschitz continuity of $\nabla f_{\gamma(t)}$, the second term is finite, 
\begin{align*}
    \sup_{t\geq t_0}\Vert t\delta(t)\nabla f_{\gamma(t)}(x(t))\Vert \le \sup_{t \geq t_0}t\delta(t)\frac{1}{\gamma(t)}\Vert x(t)-x^* \Vert <+\infty,
\end{align*}
where the last inequality holds since $\sup_{t \geq t_0} \Vert x(t) \Vert <+\infty$ and $\frac{t\delta(t)}{\gamma(t)}=O(1/t)$ (by Assumption \ref{ass:B}\textit{(ii)}). 

As such and by~\eqref{dummy00}, the left hand side is also finite, 
\begin{align*}
    \sup_{t \geq t_0} \Vert t \dot{x}(t) \Vert <+\infty, \text{ or in other words }  \Vert \dot{x}(t)\Vert = O(1/t).
\end{align*}
From Lemma \ref{lemma: derivative-gradient}, we imply 
\begin{align*}
    t^3\Big\Vert \frac{\mathrm{d}}{\mathrm{d}t}[\delta(t)\nabla f_{\gamma(t)}(x(t))] \Big \Vert^2 &\le 2t^3\frac{\delta(t)^2}{\gamma(t)^2}\Vert \dot{x}(t) \Vert^2+2t^3\Big[\frac{2\delta(t)\vert \dot{\gamma}(t)\vert}{\gamma(t)^2}+\frac{\vert \dot{\delta}(t)\vert}{\gamma(t)}\Big]^2\Vert x(t)-x^*\Vert^2.
\end{align*}
Since we have $\frac{t\delta (t)}{\gamma(t)}=O(1/t)$ (Assumption \ref{ass:B}\textit{(ii)}) and $\Vert \dot{x}(t) \Vert=O(1/t)$, 
\begin{align*}
t^3\frac{\delta(t)^2}{\gamma(t)^2}\Vert \dot{x}(t) \Vert^2=O\Big(\frac{1}{t^3}\Big). 
\end{align*}
Additionally, from Assumption \ref{ass:B}\textit{(iv)}, we have that, there exists $0<\zeta\le\alpha-3$ such that 
    \begin{align*}
0 \le \frac{t\dot{\delta}(t)}{\delta(t)} \le \alpha-3-\zeta \quad\text{ for large } t.
\end{align*}
It follows that 
\begin{align}
\label{ine: upper bound of derivative of delta}
 \vert \dot{\delta}(t) \vert \le (\alpha-3-\zeta)\frac{\delta(t)}{t} \text{ for large } t.
\end{align}
As a result, for sufficiently large $t$ 
\begin{align}
\label{ine: bounding hessian}
    t^3\Big\Vert\frac{\mathrm{d}}{\mathrm{d}t}[\delta(t)\nabla f_{\gamma(t)}(x(t))] \Big \Vert^2 &\le O\Big(\frac{1}{t^3}\Big)+2t^3\Big[\frac{2\delta(t)\vert \dot{\gamma}(t)\vert}{\gamma(t)^2}+(\alpha-3-\zeta)\frac{\delta(t)}{t\gamma(t)}\Big]^2\Vert x(t)-x^*\Vert^2  \\
    &=O\Big(\frac{1}{t^3}\Big)+2t^3\Big[O\Big(\frac{1}{t^3}\Big)+O\Big(\frac{1}{t^3}\Big)\Big]^2\Vert x(t)-x^*\Vert^2\label{dummy01}\\
    &=O\Big(\frac{1}{t^3}\Big),
\end{align}
where equality~\eqref{dummy01} comes from the fact that $\sup_{t \geq t_0}\Vert x(t)\Vert <+\infty$, $\frac{t\delta(t)}{\gamma(t)}=O(\frac{1}{t})$ (Assumption \ref{ass:B}\textit{(ii)}), and $\frac{\vert \dot{\gamma}(t)\vert}{\gamma(t)}=O(\frac{1}{t})$ (Assumption \ref{ass:B}\textit{(iii)} and the fact that $\dot{\gamma}(t)>0$ for large $t$). 

By the definition of $(\text{NS-HR})_{\alpha,\beta}$, 
\begin{align*}
    \ddot{x}(t)=-\frac{\alpha}{t}\dot x(t)-\beta\,\frac{\mathrm{d}}{\mathrm{d}t}\!\Big[\delta(t)\,\nabla f_{\gamma(t)}(x(t))\Big]
-\Big(1+\frac{\beta}{t}\Big)\delta(t)\,\nabla f_{\gamma(t)}(x(t)).
\end{align*}
Hence, by taking the norm of the left-hand side, squaring it, substituting in the right-hand side, and by using the fact that $\|a+b+c\|^2 \leq 3 (\|a\|^2 + \|b\|^2 + \|c\|^2)$, we obtain
\begin{align}
\label{ine: summability of acceleration}
    t^3\Vert\ddot{x}(t)\Vert^2 \le 3\alpha^2t \Vert \dot{x}(t) \Vert^2+3\beta^2t^3\Big\Vert \frac{\mathrm{d}}{\mathrm{d}t}[\delta(t)\nabla f_{\gamma(t)}(x(t))] \Big \Vert^2+3t(t+\beta)^2\delta(t)^2\Vert \nabla f_{\gamma(t)}(x(t)) \Vert^2.
\end{align}
From \eqref{first integral estimate}, \eqref{ine: bounding hessian}, and \eqref{ine: item iv}, we deduce that the right-hand side of the above inequality belongs to $L^1([t_0,+\infty))$. Therefore,
\begin{align}\label{dummy02}
 t^3\Vert\ddot{x}(t)\Vert^2 \in L^1([t_0,+\infty)).   
\end{align}
We are now ready to \emph{prove \textit{(iii)}} which is that the rate of convergence for $\dot{x}(t)$ is actually $\Vert\dot{x}(t)\Vert=o\big(\frac{1}{t}\big)$. \\
From \eqref{first integral estimate}, we have $\int_{t_0}^{+\infty} t\Vert \dot{x}(t) \Vert^2 \mathrm{d}t < +\infty$. So $\liminf_{t\rightarrow+\infty}t\Vert \dot{x}(t) \Vert =0$. Hence, we only need to prove that $\lim_{t \rightarrow+\infty}t\Vert \dot{x}(t) \Vert$ exists. Indeed, 
\begin{align*}
\frac{\mathrm{d}}{\mathrm{d}t} t^2\Vert\dot{x}(t)\Vert^2 &=2t\Vert \dot{x}(t) \Vert^2+2t^2\langle \dot{x}(t),\ddot{x}(t)\rangle \\
&\le 2t\Vert \dot{x}(t) \Vert^2+2\langle \sqrt{t}\dot{x}(t),(t\sqrt{t})\ddot{x}(t)\rangle \quad \textrm{(use now: $2ab \leq a^2 + b^2$)}\\
&\le3t\Vert \dot{x}(t) \Vert^2 + t^3\Vert \ddot{x}(t) \Vert^2.
\end{align*}

From \eqref{first integral estimate} and \eqref{dummy02}, we deduce that the right-hand side of the above inequality belongs to $L^1([t_0,+\infty))$. This implies that $\lim_{t \rightarrow+\infty}t^2\Vert \dot{x}(t) \Vert^2$ exists, and so does $\lim_{t \rightarrow+\infty}t\Vert \dot{x}(t) \Vert$, and we prove the claim. 

We now turn to \emph{proving \textit{(ii)}} which is to prove that $\Big\Vert \nabla f_{\gamma(t)}(x(t)) \Big\Vert=o\Big(\frac{1}{t^2\delta(t)}\Big)$. \\
Recall that from \eqref{ine: item iv}, we have 
\begin{align*}
    \liminf_{t \rightarrow+\infty} t^2\delta(t)\Big\Vert \nabla f_{\gamma(t)}(x(t)) \Big\Vert=0.
\end{align*}
Therefore, we only need to show that $\lim_{t \rightarrow+\infty} t^2\delta(t)\Big\Vert \nabla f_{\gamma(t)}(x(t)) \Big\Vert$ exists. To this end, let us set
\begin{align*}
    \xi(t)=\Vert t^2\delta(t)\nabla f_{\gamma(t)}(x(t)) \Vert^2.
\end{align*}
We have 
\begin{align*}
    \frac{\mathrm{d}}{\mathrm{d}t}\xi(t)
    &=2\Big\langle t^2\delta(t)\nabla f_{\gamma(t)}(x(t)), 2t\delta(t)\nabla f_{\gamma(t)}(x(t))+t^2\frac{\mathrm{d}}{\mathrm{d}t}\delta(t)\nabla f_{\gamma(t)}(x(t)) \Big\rangle \\
    &=4t^3\delta(t)^2\Vert \nabla f_{\gamma(t)}(x(t)) \Vert^2+2t^4\delta(t)\Big\langle \nabla f_{\gamma(t)} (x(t)),\frac{\mathrm{d}}{\mathrm{d}t}\delta(t)\nabla f_{\gamma(t)}(x(t)) \Big\rangle\\
    &\le 4t^3\delta(t)^2\Vert \nabla f_{\gamma(t)}(x(t)) \Vert^2+2t^4\delta(t)\Vert \nabla f_{\gamma(t)} (x(t))\Vert \Big\Vert\frac{\mathrm{d}}{\mathrm{d}t}\delta(t)\nabla f_{\gamma(t)}(x(t)) \Big\Vert \\
    &\le  4t^3\delta(t)^2\Vert \nabla f_{\gamma(t)}(x(t)) \Vert^2+\frac{2t^4\delta(t)^2}{\gamma(t)}\Vert \nabla f_{\gamma(t)}(x(t)) \Vert \Vert \dot{x}(t) \Vert+\\&\hskip1cm +\Big(\frac{4t^4\delta(t)^2\vert \dot{\gamma}(t)\vert}{\gamma(t)}+2t^4\delta(t)\vert \dot{\delta}(t)\vert\Big)\Vert \nabla f_{\gamma(t)}(x(t)) \Vert^2,
\end{align*}
where the last inequality is true because of Lemma \ref{lemma: derivative-gradient}. 

From \eqref{ine: item iv}, we have 
\begin{align*}
4t^3\delta(t)^2\Vert \nabla f_{\gamma(t)}(x(t)) \Vert^2 \in L^1([t_0,+\infty)).   
\end{align*}
Besides, from Assumption \ref{ass:B}\textit{(ii)}, \textit{(iii)}, and \eqref{ine: upper bound of derivative of delta}, we have 
\begin{align*}
    \Big(\frac{4t^4\delta(t)^2\vert \dot{\gamma}(t)\vert}{\gamma(t)}+2t^4\delta(t)\vert \dot{\delta}(t)\vert\Big)\Vert \nabla f_{\gamma(t)}(x(t)) \Vert^2 = O\Big(t^3\delta(t)^2\Vert \nabla f_{\gamma(t)}(x(t)) \Vert^2\Big),
\end{align*}
which belongs to $L^1([t_0,+\infty))$ due to \eqref{ine: item iv}.

Additionally,  
\begin{align*}
\frac{2t^4\delta(t)^2}{\gamma(t)}\Vert \nabla f_{\gamma(t)}(x(t)) \Vert \Vert \dot{x}(t) \Vert &=\frac{2t^4\delta(t)^2}{\gamma(t)}\sqrt{\frac{\gamma(t)}{t}}\Vert \nabla f_{\gamma(t)}(x(t)) \Vert \sqrt{\frac{t}{\gamma(t)}}\Vert \dot{x}(t) \Vert \\
&\le \frac{t^4\delta(t)^2}{\gamma(t)}\Big(\frac{\gamma(t)}{t}\Vert \nabla f _{\gamma(t)}(x(t)) \Vert^2+\frac{t}{\gamma(t)}\Vert \dot{x}(t) \Vert^2\Big) \\
&=t^3\delta(t)^2\Vert \nabla f _{\gamma(t)}(x(t)) \Vert^2+\frac{t^5\delta(t)^2}{\gamma(t)^2}\Vert \dot{x}(t) \Vert^2 \\
&=t^3\delta(t)^2\Vert \nabla f _{\gamma(t)}(x(t)) \Vert^2+O(t\Vert \dot{x}(t) \Vert^2) \text{ (thanks to Assumption \ref{ass:B}\textit{(ii)})}
\end{align*}
This, combined with \eqref{ine: item iv} and \eqref{first integral estimate}, implies 
\begin{align*}
 \frac{2t^4\delta(t)^2}{\gamma(t)}\Vert \nabla f_{\gamma(t)}(x(t)) \Vert \Vert \dot{x}(t) \Vert  \in L^1([t_0,+\infty)).   
\end{align*}
As a result, 
\begin{align*}
    \frac{\mathrm{d}}{\mathrm{d}t}\xi(t) 
    &\le 4t^3\delta(t)^2\Vert \nabla f_{\gamma(t)}(x(t)) \Vert^2+\frac{2t^4\delta(t)^2}{\gamma(t)}\Vert \nabla f_{\gamma(t)}(x(t)) \Vert \Vert \dot{x}(t) \Vert+\\&\hskip1cm+\Big(\frac{4t^4\delta(t)^2\vert \dot{\gamma}(t)\vert}{\gamma(t)}+2t^4\delta(t)\vert \dot{\delta}(t)\vert\Big)\Vert \nabla f_{\gamma(t)}(x(t)) \Vert^2 
    \in L^1([t_0,+\infty)). 
\end{align*}
This classically implies that $\lim_{t \to +\infty}\xi(t)$ exists. We therefore complete the proof for \textit{(ii)}. 

We have now that by the convexity of $f_{\gamma(t)}$
\begin{align*}
    f_{\gamma(t)}(x(t))-f(x^*)&=f_{\gamma(t)}(x(t))-f_{\gamma(t)}(x^*) \le \langle \nabla f_{\gamma(t)}(x(t)),x(t)-x^* \rangle \\
    &\le \Vert \nabla f_{\gamma(t)}(x(t)) \Vert \Vert x(t)-x^* \Vert.
\end{align*}
Since $t \mapsto x(t)$ is bounded and $\Vert \nabla f_{\gamma(t)}(x(t)) \Vert=o\Big(\frac{1}{t^2\delta(t)}\Big)$, we have 
\begin{align*}
f_{\gamma(t)}(x(t))-f(x^*)=  o\Big(\frac{1}{t^2\delta(t)}\Big),
\end{align*}
which \emph{proves \textit{(i)}}. 

What remains is to \emph{prove \textit{(vi)}} which is {the weak convergence of $x(t)$ to a minimizer of $f$ as $t \to +\infty$}.\\[2mm]
We will rely on the Opial's lemma \cite{Opial}, which we report here with our notation for completeness.

\begin{lemma}[Opial's lemma] Let $\mathcal{H}$ be a Hilbert space, and $S \subset \mathcal{H}$. Consider a mapping $x:[0,+\infty) \longrightarrow \mathcal{H}$. Asume the following 
\begin{enumerate}[label=(\roman*)]
    \item for any $x \in S$, $\lim_{t \rightarrow +\infty} \Vert x(t)-x \Vert$ exists. 
    \item Each weak sequential cluster point of the map $x$ is an element of $S$.
\end{enumerate}
Then $x(t)$ converges weakly to some element $x_{\infty} \in S$. 
\end{lemma}

To check that we verify the first item of the Opial's lemma, let us consider $h:[t_0,+\infty) \to \mathbb
{R}^n$ defined by 
\begin{align*}
    h(t)=\frac{1}{2}\Vert x(t)-x^* \Vert^2,
\end{align*}
where $x^*$ is an arbitrary element of $\text{argmin}_{\mathcal{H}}f$. Simple computation yields 
\begin{align*}
    &\dot{h}(t)=\langle x(t)-x^*, \dot{x}(t)\rangle. \\
    &\ddot{h}(t)=\Vert \dot{x}(t) \Vert^2+\langle x(t)-x^*,\ddot{x}(t)\rangle.
\end{align*}
As a result,
\begin{align*}
    \ddot{h}(t)+\frac{\alpha}{t}\dot{h}(t)=\Vert \dot{x}(t) \Vert^2 +\langle x(t)-x^*,\ddot{x}(t)+\frac{\alpha}{t}\dot{x}(t) \rangle.
\end{align*}
Using the definition of $(\text{NS-HR})_{\alpha,\beta}$ and the above equality, we deduce 
\begin{align*}
    \ddot{h}(t)+\frac{\alpha}{t}\dot{h}(t)+\Big(1+\frac{\beta}{t}\Big)\delta(t)\langle\nabla f_{\gamma(t)}(x(t)), x(t)-x^*\rangle =\Vert\dot{x}(t) \Vert^2-\beta \Big\langle x(t)-x^*,\frac{\mathrm{d}}{\mathrm{d}t}\Big[\delta(t)\nabla f_{\gamma(t)}(x(t))\Big] \Big \rangle.
\end{align*}
Recall that $\nabla f_{\gamma(t)}$ is $\gamma(t)-$cocoercive since $f_{\gamma(t)}$ is convex has $\frac{1}{\gamma(t)}-$Lipschitz gradient, that is $\langle \nabla f_{\gamma(t)}(x(t)) - \nabla f_{\gamma(t)}(x^*),x(t)-x^*\rangle \geq \gamma(t) \Vert \nabla f_{\gamma(t)}(x(t)) - \nabla f_{\gamma(t)}(x^*) \Vert^2$. Using this fact and the Cauchy-Schwarz inequality yields 
\begin{align*}
    t\ddot{h}(t)+\alpha \dot{h}(t)+(t+\beta)\gamma(t)\delta(t)\Vert \nabla f_{\gamma(t)}(x(t)) \Vert^2 \le t\Vert \dot{x}(t) \Vert^2 +\beta t\Vert x(t)-x^* \Vert \Big \Vert \frac{\mathrm{d}}{\mathrm{d}t}\Big[\delta(t)\nabla f_{\gamma(t)}(x(t))\Big] \Big \Vert.
\end{align*}

Set 
\begin{align*}
    &\theta(t)=(t+\beta)\gamma(t)\delta(t)\Vert \nabla f_{\gamma(t)}(x(t)) \Vert^2.\\
    &k(t)=t\Vert \dot{x}(t) \Vert^2 +\beta t\Vert x(t)-x^* \Vert \Big \Vert \frac{\mathrm{d}}{\mathrm{d}t}\Big[\delta(t)\nabla f_{\gamma(t)}(x(t))\Big] \Big \Vert.
\end{align*}
Recall that $x$ is bounded, $t\Vert \dot{x}(t) \Vert^2 \in L^1([t_0,+\infty))$ and 
\begin{align*}
\Big \Vert \frac{\mathrm{d}}{\mathrm{d}t}\Big[\delta(t)\nabla f_{\gamma(t)}(x(t))\Big] \Big \Vert ={O}\Big(\frac{1}{t^3}\Big). 
\end{align*}
We obtain $k \in L^1([t_0,+\infty),\mathbb{R}^+)$. Moreover, $\theta$ is a nonnegative function. We are now in the position to conclude that $\lim_{t \to +\infty}h(t)$ exists by invoking the following lemma \cite{AttouchPeyp2019b}.
\begin{lemma}
    Suppose $h:[t_0,+\infty) \to \mathbb{R}$ is a $C^1$ function and bounded from below, where $t_0>0$. Suppose $\alpha>1, \theta:[t_0,+\infty) \to \mathbb{R}^+$ and $k \in L^1([t_0,+\infty),\mathbb{R}^+)$ satisfy
    \begin{align*}
        t\ddot{h}(t)+\alpha\dot{h}(t)+\theta(t)\le k(t)   \text{ for a.e. } x \in [t_0, +\infty). 
    \end{align*}
Then, $\lim_{t \to +\infty}h(t)$ exists.
\end{lemma}
\medskip
We now turn to check the second item of the Opial's lemma. Let $t_n \to +\infty$ and assume that $t_n \in [t_0, +\infty)$ and $x(t_n) \rightharpoonup  x^*$ as $n \to +\infty$. This means that $\langle x(t_n),y\rangle \to \langle x^*,y \rangle$ as $n \to +\infty$ for every $y \in \mathcal{H}$. We need to show that $0 \in \partial f(x^*)$.

It follows from $\Big\Vert \nabla f_{\gamma(t)}(x(t)) \Big\Vert=o\Big(\frac{1}{t^2\delta(t)}\Big)$ that
\begin{align*}
    \lim_{t \to +\infty}\frac{t^2\delta(t)}{\gamma(t)}\Big(x(t)-\text{prox}_{\gamma(t)f}(x(t)\Big)=0.
\end{align*}
Set $y(t):=x(t)-\text{prox}_{\gamma(t)f}(x(t))$. Since $$\liminf_{t\to +\infty}\frac{t^2\delta(t)}{\gamma(t)}>0,$$ it follows that $\lim_{t \to +\infty} y(t)=0$ and by the definition of proximal operators
\begin{align*}
    \frac{y(t)}{\gamma(t)} \in  \partial f\Big(x(t)-y(t)\Big).
\end{align*}
In particular, 
\begin{align*}
    \frac{y(t_n)}{\gamma(t_n)} \in  \partial f\Big(x(t_n)-y(t_n)\Big).
\end{align*}
We have $\frac{y(t_n)}{\gamma(t_n)}$ converges strongly to zero and $x(t_n)-y(t_n)$ converges weakly to $x^*$. Therefore, by the weak-strong closedness of the graph of $\partial f$, we imply that $0 \in \partial f(x^*)$. The second condition of the Opial's lemma is thereby verified. 

All in all, we have proved \textit{(vi)}, via the Opial's lemma, that is that $x(t)$ converges weakly as $t \to +\infty$, and its weak limit belongs to $\text{argmin}_{\mathcal{H}}f$.
\end{proof}
\begin{remark}
\label{cor:prox-rate}
Under the assumptions of Theorem~\ref{first convergence result}, one can also obtain the convergence rate for the original objective function:
\[
f\Big(\text{prox}_{\gamma(t)f}(x(t))\Big)-f^*
=
o\Big(\frac{1}{t^2\delta(t)}\Big).
\]
Indeed, set
\[
p(t):=\text{prox}_{\gamma(t)f}(x(t)).
\]
By the definition of the Moreau envelope,
\[
f_{\gamma(t)}(x(t))-f^*
=
f(p(t))-f^*+\frac{1}{2\gamma(t)}\|x(t)-p(t)\|^2 \geq f(p(t))-f^*.
\]
The conclusion now follows from Theorem~\ref{first convergence result}(i).
\end{remark}
\begin{remark}
\label{rem:arbitrarily-fast}
In the polynomial regime of Remark~\ref{remark 3.1}, namely
\[
\delta(t)=t^p,\qquad \gamma(t)=ct^{p+2},
\qquad c>0,\qquad p\in[0,\alpha-3),
\]
Theorem~\ref{first convergence result} together with Remark~\ref{cor:prox-rate} yields
\[
f_{\gamma(t)}(x(t))-f^*
=o(t^{-(p+2)}),
\]
\[
f\bigl(\text{prox}_{\gamma(t)f}(x(t))\bigr)-f^*
=o(t^{-(p+2)}),
\]
and
\[
\|\nabla f_{\gamma(t)}(x(t))\|
=o(t^{-(p+2)}).
\]
Since \(p\) can be chosen arbitrarily close to \(\alpha-3\), the exponent \(p+2\) can be made arbitrarily close to \(\alpha-1\). Hence, by increasing \(\alpha\), the dynamic can achieve arbitrarily fast polynomial convergence rates.
\end{remark}
\begin{remark}
\label{rem:comparison-bot}
It is instructive to compare the admissible polynomial regime of the present paper with that considered by Bot and Karapetyants~\cite{BotKarapetyants2022}. In our setting, Remark~\ref{remark 3.1} allows
\[
\delta(t)=t^p,\qquad \gamma(t)=ct^{p+2},
\qquad c>0,\qquad p\in[0,\alpha-3).
\]
Hence the Moreau parameter \(\gamma(t)\) grows like a power \(t^{p+2}\) with exponent larger or equal to \(2\), and this exponent can be made arbitrarily large by increasing \(\alpha\).

On the other hand, in the polynomial setting analyzed in~\cite{BotKarapetyants2022}, the Moreau parameter is taken of the form
\[
\lambda(t)=\lambda t^l,
\]
with \(0\le l\le 1\). Therefore, the admissible smoothing regime in~\cite{BotKarapetyants2022} is fundamentally different from the one considered here: their theory allows at most polynomial growth of order \(t\), whereas our framework naturally covers Moreau parameters growing like \(t^r\) with \(r\geq2\), and even arbitrarily large \(r\) as \(\alpha\) increases. In this sense, the two models operate in distinct asymptotic regimes.
\end{remark}

\section{Extension to the setting of maximally monotone operators}
A broad and unifying way to phrase many optimization and variational problems is the following: given a real Hilbert space $\mathcal{H}$ and a maximally monotone operator $A : \mathcal{H} \to 2^{\mathcal{H}}$, find a point $x^\star \in \mathcal{H}$ such that
\[
0 \in A x^\star .
\]
This zero-finding formulation encompasses first-order optimality conditions for convex minimization (for instance, when $A=\partial f$), monotone inclusions arising from constrained problems and KKT systems, as well as equilibrium and variational inequality models. Maximal monotonicity is a natural regularity assumption guaranteeing that the resolvent operator is everywhere defined and single-valued, which in turn allows the use of proximal and splitting methods such as the proximal point algorithm, the forward--backward scheme, and the Douglas--Rachford method.

To treat nonregularity in this case we resort to Yosida approximation. Let $A : \mathcal{H}\to 2^{\mathcal{H}}$ be a maximally monotone operator and let $\lambda>0$. The resolvent of $A$ is defined by
\[
J_{\lambda A} := (I+\lambda A)^{-1}.
\]
The Yosida approximation of $A$ is the single-valued operator $A_\lambda : \mathcal{H} \to \mathcal{H}$ given by
\[
A_\lambda := \frac{1}{\lambda}\bigl(I - J_{\lambda A}\bigr),
\qquad \text{that is,} \qquad
A_\lambda(x) = \frac{1}{\lambda}\bigl(x - J_{\lambda A}(x)\bigr).
\]
Equivalently, one has
\[
A_\lambda(x) \in A\bigl(J_{\lambda A}(x)\bigr)
\quad \text{and} \quad
J_{\lambda A}(x) = x - \lambda A_\lambda(x).
\]
The operator $A_\lambda$ is monotone and $\tfrac{1}{\lambda}$-Lipschitz continuous (in fact, cocoercive with constant $\lambda$), and $A_\lambda$ converges to $A$ in the graph sense as $\lambda \downarrow 0$. In the particular case where $A = \partial f$ for a proper, lower semicontinuous, convex function $f$, the resolvent coincides with the proximal mapping $J_{\lambda A} = \operatorname{prox}_{\lambda f}$ and the Yosida approximation reduces to $A_\lambda = \nabla f_\lambda$, where $f_\lambda$ denotes the Moreau envelope of $f$. 

Analogously to the setting of optimization problems, we propose the following dynamic:  
\begin{equation*}
\ddot x(t)\;+\;\frac{\alpha}{t}\dot x(t)\;+\;\beta\,\frac{\mathrm{d}}{\mathrm{d}t}\!\Big[\delta(t)\,A_{\gamma(t)}(x(t))\Big]
\;+\;\Big(1+\frac{\beta}{t}\Big)\delta(t)\,A_{\gamma(t)}(x(t))\;=\;0, \quad (\text{HR-MMD})_{\alpha,\beta}
\end{equation*}
where $A_{\gamma(t)}$ is the Yosida approximation of $A$ with constant $\gamma(t)$.

Before going into the convergence analysis of $(\text{HR-MMD})_{\alpha,\beta}$, we need to make the following standing Assumptions.
\begin{Assumption}\label{ass:C}
The following conditions hold:
\begin{enumerate}[label=(\roman*)]
    \item $A : \mathcal{H}\to 2^{\mathcal{H}}$ be a maximally monotone operator.
    \item The set of zeros $\text{zer}A$ is nonempty. 
\end{enumerate}
\end{Assumption}
\begin{Assumption}\label{ass:D}
Let $\alpha > 1$, $\beta>0$, and $0<\sigma<\alpha-1$.  
Let $\delta,\gamma:[t_0,+\infty)\to(0,+\infty)$ be $C^1$ functions satisfying:

\begin{enumerate}[label=(\roman*)]
    \item $\displaystyle\lim_{t\to +\infty}\frac{\gamma(t)}{t^2\delta(t)}>\frac{1}{4(\alpha-\sigma-1)\sigma}.$
    \item 
    There exists $M>0$ such that for all large $t$,
    \[
    0 < \frac{t\dot{\delta}(t)}{\delta(t)} \le M.
    \]
    \item 
    $\displaystyle \frac{\vert\dot{\gamma}(t)\vert}{\gamma(t)} = O(1/t).$
\end{enumerate}
\end{Assumption}
\begin{remark}
A simple way to particularize the above assumptions is to choose $\delta$ and $\gamma$ as power functions of $t$ of the form
\[
\delta(t)=t^p,
\qquad
\gamma(t)=c\,t^{p+2},
\]
where $c>\frac{1}{4(\alpha-\sigma-1)\sigma}$, and $p>0$.

Indeed, we have
\[
\frac{\gamma(t)}{t^2\delta(t)}
=
\frac{c\,t^{p+2}}{t^2 t^p}
=c,
\]
hence assumption {\rm(i)} is satisfied provided that
\[
c>\frac{1}{4(\alpha-\sigma-1)\sigma}.
\]

Next,
\[
\dot\delta(t)=p\,t^{p-1},
\qquad\text{so}\qquad
\frac{t\dot\delta(t)}{\delta(t)}=p,
\]
and therefore assumption {\rm(ii)} holds.

Finally,
\[
\dot\gamma(t)=c(p+2)t^{p+1},
\qquad\text{so}\qquad
\frac{|\dot\gamma(t)|}{\gamma(t)}=\frac{p+2}{t},
\]
which shows that assumption {\rm(iii)} is also satisfied.

Therefore, the assumptions are fulfilled for the choice
\[
\delta(t)=t^p,
\qquad
\gamma(t)=c\,t^{p+2},
\]
with $p>0$, and
\[
c>\frac{1}{4(\alpha-\sigma-1)\sigma}.
\]
\end{remark}
The following lemma will be useful for the convergence analysis.
\begin{lemma}
\label{lemma:diag-majorant}
Let $\mathcal H$ be a real Hilbert space and let real numbers
\[
a<0,\qquad c<0,\qquad \Delta:=b^{2}-4ac<0
\]
be given.
For $x,y\in\mathcal H$ define
\[
P(x,y)=a\|x\|^{2}+b\langle x,y\rangle+c\|y\|^{2}.
\]
We have $P(x,y) \le 0 \quad \forall (x,y) \in \mathcal{H}^2$. \\
Furthermore, for two non-negative numbers $m,n$ the uniform inequality
\begin{equation}\label{ineq:goal}
P(x,y)\;\le\;-\,m\|x\|^{2}-n\|y\|^{2}\qquad\forall\,x,y\in\mathcal H
\end{equation}
holds {iff}
\begin{equation}\label{ineq:mn-conditions}
m\le -a,\quad n\le -c,\quad
\bigl(-m-a\bigr)\bigl(-n-c\bigr)\;\ge\;\frac{b^{2}}{4}\;.
\end{equation}
A particular choice for $(m,n)$ in which \eqref{ineq:goal} holds is
\begin{align*}
    0<m<\frac{\Delta}{4c}, \quad n=\frac{\Delta-4mc}{4(m+a)}.
\end{align*}
Note that with this choice, we have $n>0$.
\end{lemma}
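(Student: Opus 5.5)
The plan is to reduce everything to the scalar quadratic form in $(\|x\|,\|y\|)$ and then use the elementary characterization of negative semidefinite $2\times2$ symmetric matrices: the diagonal entries are $\le 0$ and the determinant is $\ge 0$.

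\emph{Step 1: nonpositivity of $P$.} Since $a<0$, I complete the square:
\[
P(x,y)=a\Big\|x+\tfrac{b}{2a}y\Big\|^2+\Big(c-\tfrac{b^2}{4a}\Big)\|y\|^2 .
\]
The second coefficient is $-\Delta/(4a)$, which is negative because $\Delta<0$ and $a<0$. Hence both terms are $\le 0$, and so $P\le 0$.

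\emph{Step 2: the equivalence.} Set
\[
Q(x,y):=P(x,y)+m\|x\|^2+n\|y\|^2=(a+m)\|x\|^2+b\langle x,y\rangle+(c+n)\|y\|^2 .
\]
Then \eqref{ineq:goal} is equivalent to $Q\le 0$ on $\mathcal H^2$.

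For necessity, I test special pairs. Taking $y=0$ and $x\neq0$ gives $m\le -a$, and taking $x=0$ gives $n\le -c$. Taking $x=re$ and $y=se$ with $\|e\|=1$ shows that the real quadratic form $(a+m)r^2+brs+(c+n)s^2$ is $\le0$ for all $(r,s)\in\mathbb R^2$. Its matrix is therefore negative semidefinite, so its determinant $(a+m)(c+n)-b^2/4$ is $\ge0$. This is the last condition in \eqref{ineq:mn-conditions}, since $(a+m)(c+n)=(-m-a)(-n-c)$.

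For sufficiency, Cauchy--Schwarz gives $b\langle x,y\rangle\le |b|\,\|x\|\,\|y\|$. Hence $Q(x,y)\le (a+m)u^2+|b|uv+(c+n)v^2$ with $u=\|x\|$ and $v=\|y\|$. The matrix of this form has nonpositive diagonal and determinant $(a+m)(c+n)-b^2/4\ge0$, so it is negative semidefinite, and the form is $\le 0$.

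The only delicate point is the degenerate case $a+m=0$ (or $c+n=0$). There the determinant condition forces $b=0$, and the claim is then immediate. I expect this bookkeeping to be the main (minor) obstacle.

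\emph{Step 3: the explicit choice.} First I check that $0<m$ is possible: $\Delta/(4c)>0$ because $\Delta<0$ and $c<0$. Next, $\Delta/(4c)=b^2/(4c)-a\le -a$, so $m<-a$, i.e.\ $m+a<0$. Also $m<\Delta/(4c)$ together with $c<0$ gives $4mc>\Delta$, so the numerator $\Delta-4mc<0$. With $m+a<0$ this yields $n>0$.

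A direct computation then gives
\[
(m+a)(n+c)=\tfrac{\Delta-4mc}{4}+c(m+a)=\tfrac{b^2}{4},
\]
so the determinant condition in \eqref{ineq:mn-conditions} holds with equality. Since $m+a<0$ and $(m+a)(n+c)=b^2/4\ge0$, we get $n+c\le0$, i.e.\ $n\le -c$. All conditions in \eqref{ineq:mn-conditions} are therefore satisfied, and Step 2 yields \eqref{ineq:goal}.
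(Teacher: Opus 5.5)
Your proof is correct, and its backbone matches the paper's. Both arguments reduce the Hilbert-space inequality to negative semidefiniteness of the scalar matrix $\begin{pmatrix} a+m & b/2\\ b/2 & c+n\end{pmatrix}$, and then read off the conditions from the signs of the diagonal entries and the determinant.

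The difference is in how that reduction is carried out.

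\textbf{The paper's route.} It encodes $P$ as the block operator $\begin{pmatrix} aI & \tfrac12 bI\\ \tfrac12 bI & cI\end{pmatrix}$ on $\mathcal H^2$ and rewrites the goal as the Loewner inequality $D-A\succeq 0$. It then simply asserts that, because every block is a multiple of $I$, this is equivalent to positive semidefiniteness of the scalar $2\times2$ matrix. It obtains $P\le 0$ in the same way, by declaring the coefficient matrix negative definite.

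\textbf{Your route.} You prove both directions of the reduction by hand. Collinear test vectors $x=re$, $y=se$ give necessity. The Cauchy--Schwarz bound $b\langle x,y\rangle\le |b|\,\|x\|\,\|y\|$ reduces sufficiency to a quadratic form in $(\|x\|,\|y\|)$. You get $P\le 0$ from an explicit completion of the square.

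Your version is therefore more elementary and actually justifies the step the paper leaves implicit, while the paper's operator formulation is more compact. You also verify the explicit choice of $(m,n)$: you show $m+a<0$, $n>0$, and $(m+a)(n+c)=b^2/4$, hence $n\le -c$. The paper states this choice in the lemma but never checks it in its proof.

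Two small remarks:
\begin{itemize}
\item The degenerate case $a+m=0$ needs no separate bookkeeping. For a symmetric $2\times2$ matrix, a nonpositive trace together with a nonnegative determinant already forces both eigenvalues to be $\le 0$.
\item The ``only if'' direction tacitly requires $\mathcal H\neq\{0\}$, both in your argument and in the paper's. If $\mathcal H=\{0\}$, the inequality holds for every $m,n$.
\end{itemize}
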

\begin{proof}
Let us introduce the bounded self-adjoint operator
\[
A=
\begin{pmatrix}
a\,I & \tfrac12\,b\,I \\
\tfrac12\,b\,I & c\,I
\end{pmatrix}\;:\;\mathcal H^{2}\longrightarrow\mathcal H^{2},
\]
where $I$ is the identity on $\mathcal{H}$. We have 
\(
P(x,y)=\bigl\langle (x,y),A(x,y)\bigr\rangle_{\mathcal H^{2}}.
\)
Because $a<0,\;c<0$ and $\Delta<0$, the $2\times2$ coefficient matrix $\begin{pmatrix}
a & \tfrac12\,b\ \\
\tfrac12\,b\ & c\
\end{pmatrix}$ is
negative-definite; hence $A\prec0$ or in other words $P(x,y) \le 0 \quad \forall (x,y)\in \mathcal{H}^2$. \\[2mm]
For $m,n\ge0$ put
\[
D:=-\operatorname{diag}(mI,nI)=
\begin{pmatrix}-mI&0\\0&-nI\end{pmatrix}.
\]
Inequality \eqref{ineq:goal} is equivalent to the inequality
\begin{equation}\label{ineq:loewner}
D-A\succeq0 .
\end{equation}
\medskip
Explicitly,
\[
D-A=
\begin{pmatrix}
-\,m\,I & 0\\
0 & -\,n\,I
\end{pmatrix}
-
\begin{pmatrix}
a\,I & \tfrac12\,b\,I\\
\tfrac12\,b\,I & c\,I
\end{pmatrix}
=
\begin{pmatrix}
-(m+a)\,I & -\tfrac12\,b\,I\\[4pt]
-\tfrac12\,b\,I & -(n+c)\,I
\end{pmatrix}.
\]
Because each block is a multiple of $I$, $D-A\succeq0$ on $\mathcal H^{2}$
{iff} the scalar matrix
\[
B:=
\begin{pmatrix}
p & -\tfrac12\,b\\[4pt]
-\tfrac12\,b & q
\end{pmatrix},
\qquad
p:=-m-a,\; q:=-n-c,
\]
is positive-semidefinite on $\mathbb R^{2}$.\\
\medskip
For a symmetric $2\times2$ matrix, $B\succeq0$ is equivalent to
\[
p\ge0,\qquad q\ge0,\qquad pq-\frac{b^{2}}{4}\ge0.
\]
Re-expressing $p,q$ in terms of $m,n$ yields exactly the conditions
\eqref{ineq:mn-conditions}.
\end{proof}

For the convergence analysis we employ the Lyapunov function 
\begin{align*}
    \mathcal{V}_{A}(t)=\frac{1}{2}\Big\Vert \sigma(x(t)-x^*)+t\dot{x}(t)+\beta t\delta(t)\,A_{\gamma(t)}(x(t)) \Big\Vert^2 +\eta\Vert x(t)-x^*\Vert^2, 
\end{align*}
where $\sigma>0$ is the constant appearing in Assumption \ref{ass:C} and $\eta>0$ is to be chosen later.
In contrast to optimization settings, this Lyapunov function does not involve any quantity associated with an objective function. 

Taking the derivative of $\mathcal{V}_A$ gives 
\begin{align}
\label{derivative-lyapunov}
    \dot{\mathcal{V}}_{A}(t)=&\Big \langle \sigma(x(t)-x^*)+t\dot{x}(t)+\beta t\delta(t)\,A_{\gamma(t)}(x(t)), (\sigma+1)\dot{x}(t)+t\ddot{x}(t)+\beta\frac{\mathrm{d}}{\mathrm{d}t}\Big[t\delta(t)\,A_{\gamma(t)}(x(t))\Big] \Big\rangle \notag\\
    &+2\eta\langle x(t)-x^*,\dot{x}(t) \rangle. 
\end{align}
From $(\text{HR-MMD})_{\alpha,\beta}$, we have 
\begin{align*}
    t\ddot{x}(t)=-\alpha\dot{x}(t)-\beta t\frac{\mathrm{d}}{\mathrm{d}t}\Big[\delta(t)\,A_{\gamma(t)}(x(t))\Big]-(t+\beta)\delta(t)\,A_{\gamma(t)}(x(t)).
\end{align*}
Plugging this in \eqref{derivative-lyapunov} and simplifications give
\begin{align*}
\dot{\mathcal{V}}_{A}(t)=&\Big \langle \sigma(x(t)-x^*)+t\dot{x}(t)+\beta t\delta(t)\,A_{\gamma(t)}(x(t)), (\sigma+1-\alpha)\dot{x}(t)-t\delta(t)\,A_{\gamma(t)}(x(t)) \Big\rangle \\
    &+2\eta\langle x(t)-x^*,\dot{x}(t) \rangle. 
\end{align*}
With further algebraic manipulations, we obtain 
\begin{align*}
    \dot{\mathcal{V}}_{A}(t)=&\Big[\sigma(\sigma+1-\alpha)+2\eta\Big]\langle x(t)-x^*,\dot{x}(t) \rangle +t(\sigma+1-\alpha)\Vert \dot{x}(t) \Vert^2 \\
    &+\Big[\beta(\sigma+1-\alpha)-t\Big]t\delta(t)\Big\langle A_{\gamma(t)}(x(t)),\dot{x}(t) \Big\rangle -\sigma t\delta(t) \Big \langle A_{\gamma(t)}(x(t)),x(t)-x^{*} \Big \rangle \\
    &-\beta t^2\delta(t)^2\Big\Vert A_{\gamma(t)}(x(t)) \Big \Vert^2.
\end{align*}
Since $A_{\gamma(t)}$ is $\gamma(t)-$cocoercive, 
\begin{align*}
\Big \langle A_{\gamma(t)}(x(t)),x(t)-x^{*} \Big \rangle \geq \gamma(t) \Big \Vert A_{\gamma(t)}(x(t)) \Big \Vert^2. 
\end{align*}
In turn, 
\begin{align*}
    \dot{\mathcal{V}}_{A}(t) &\le \Big[\sigma(\sigma+1-\alpha)+2\eta\Big]\langle x(t)-x^*,\dot{x}(t) \rangle +t(\sigma+1-\alpha)\Vert \dot{x}(t) \Vert^2 \\
    &\hspace{5mm}+\Big[\beta(\sigma+1-\alpha)-t\Big]t\delta(t)\Big\langle A_{\gamma(t)}(x(t)),\dot{x}(t) \Big\rangle  -\Big({\sigma t\gamma(t)\delta(t)}+{\beta t^2\delta(t)^2}\Big)\Big\Vert A_{\gamma(t)}(x(t)) \Big \Vert^2 \\
    &= \Big[\sigma(\sigma+1-\alpha)+2\eta\Big]\langle x(t)-x^*,\dot{x}(t) \rangle + \Omega(t),
\end{align*}
where we have introduced the new function $\Omega(t)$ for ease of notation. 

We choose $\eta$ in the Lyapunov function such that $\sigma(\sigma+1-\alpha)+2\eta=0$ and define $B:=\alpha-\sigma-1>0$. In turn, $\dot{\mathcal{V}}_{A}(t) = \Omega(t)$. 

We will see that $\Omega(t) \le 0$ for sufficiently large $t$. To facilitate the process, let us introduce some notations
\begin{align*}
    &a(t):=t(\sigma+1-\alpha),\\
    &b(t):=\Big[\beta(\sigma+1-\alpha)-t\Big]t\delta(t),\\
    &c(t):=-\Big({\sigma t\gamma(t)\delta(t)}+{\beta t^2\delta(t)^2}\Big). 
\end{align*}
By Lemma \ref{lemma:diag-majorant} and that $a(t)<0$ and $c(t)<0$, in order for $\Omega(t) \le 0$, it is sufficient to have $\Delta(t)=b(t)^2-4a(t)c(t) < 0$. Indeed, after elementary computations, we arrive at
\begin{align*}
    \Delta(t)={t^4\delta(t)^2}\Big[\Big(1-\frac{\beta B}{t}\Big)^2-4B\sigma \frac{\gamma(t)}{t^2\delta(t)}\Big].
\end{align*}
Since $\lim_{t\to +\infty}\frac{\gamma(t)}{t^2\delta(t)}>\frac{1}{4B\sigma}$, we obtain that $\Delta(t) < 0$ as long as $t$ is large enough. 

Additionally, we can also compute 
\begin{align*}
    \frac{\Delta(t)}{4tc(t)}=\frac{4B\sigma\frac{\gamma(t)}{t^2\delta(t)}-(1-\frac{\beta B}{t})^2}{4\sigma\frac{\gamma(t)}{t^2\delta(t)}+4\frac{\beta}{t}}.
\end{align*}
Therefore 
\begin{align*}
\lim_{t \to +\infty} \frac{\Delta(t)}{4tc(t)} =\frac{4B\sigma L-1}{4\sigma L}=B-\frac{1}{4\sigma L}>0,  
\end{align*}
where $L:=\lim_{t\to +\infty}\frac{\gamma(t)}{t^2\delta(t)} >0.$ 

If we choose a $\varepsilon$ such that $0<\varepsilon<B-\frac{1}{4\sigma L}$ and set $m(t):=\varepsilon t$, we have for sufficiently large $t$,
\begin{align*}
    m(t)<\frac{\Delta(t)}{4c(t)}.
\end{align*}
We now compute $n(t)$ according to Lemma \ref{lemma:diag-majorant} defined as 
\begin{align*}
    n(t):= \frac{\Delta(t)-4m(t)c(t)}{4(m(t)+a(t))} 
    =t\,\sigma\,\delta(t)\gamma(t)
+\frac{t\,\delta(t)^{2}\Big((t-\beta B)^{2}+4\varepsilon\beta\,t\Big)}{4(\varepsilon-B)}.
\end{align*}
It should be noted that $n(t) > 0$ for all $t \geq t_0$. 

Applying Lemma \ref{lemma:diag-majorant} yields for sufficiently large $t$, say for $t \geq t_1\geq t_0$, that
\begin{align}
    \label{ine:derivative of Lyapunov}\dot{\mathcal{V}}_{A}(t) \le -\varepsilon t\Vert \dot{x}(t)\Vert^2 -n(t)\Big\Vert A_{\gamma(t)}(x(t)) \Big \Vert^2.
\end{align}
Integrating Inequality \eqref{ine:derivative of Lyapunov} gives 
\begin{align*}
    \mathcal{V}_{A}(t)+\int_{t_1}^{t} \varepsilon \tau\Vert \dot{x}(\tau) \Vert^2 d\tau +\int_{t_1}^{t} n(\tau)\Big\Vert A_{\gamma(\tau)}(x(\tau)) \Big \Vert^2 \mathrm{d}\tau \le \mathcal{V}_{A}(t_1).
\end{align*}
Combining this inequality with the definition of $\mathcal{V}_{A}$, we obtain the following immediate implications: 
\begin{align}
&\sup_{t\geq t_0}\Vert x(t) \Vert < +\infty \\
&\sup_{t\geq t_0}\Big\Vert t\dot{x}(t)+\beta t\delta(t)A_{\gamma(t)}(x(t)) \Big\Vert <+\infty. \label{property: sum of velo and A}\\
&\int_{t_0}^{+\infty} t\Vert \dot{x}(t) \Vert^2 \mathrm{d}t < +\infty. \label{velocity convergence:ingre1} \\
&\int_{t_1}^{+\infty} t^3\delta(t)^2\Big\Vert A_{\gamma(t)}(x(t)) \Big \Vert^2 \mathrm{d}t <+\infty, \label{velocity convergence:ingre3}
\end{align}
where~\eqref{velocity convergence:ingre3} is obtained by noticing that 
\begin{align*}
    \lim_{t \to +\infty} \frac{n(t)}{t^3\delta(t)^2}&=\lim_{t \to +\infty} \sigma \frac{\gamma(t)}{t^2\delta(t)}+\frac{(t-\beta B)^2+4\varepsilon \beta t}{4(\varepsilon-B)t^2} \\
    &=\sigma L+\frac{1}{4(\varepsilon-B)} >0 \quad (\text{since } 0<\varepsilon<B-\frac{1}{4\sigma L}).
\end{align*}
The inequality~\eqref{property: sum of velo and A} can actually be strengthened to 
\begin{align}
     &\sup_{t\geq t_0}t\Vert \dot{x}(t)\Vert <+\infty \text{ or equivalently } \Vert \dot{x}(t) \Vert={O}({1}/{t}). \label{convergence: velocity} \\
    &t\delta(t)\Big\Vert A_{\gamma(t)}(x(t)) \Big\Vert = {O}({1}/{t}) \text{ or equivalently } \Big\Vert A_{\gamma(t)}(x(t)) \Big\Vert= O\Big(\frac{1}{t^2\delta(t)}\Big). \label{BigO_gradient}
\end{align}
Indeed, by virtue of the $\frac{1}{\gamma(t)}-$Lipschitz continuity of $A_{\gamma(t)}$, we have for all $t \geq t_0$
\begin{align*}
  t\delta(t)\Big\Vert A_{\gamma(t)}(x(t)) \Big\Vert&=t\delta(t)\Big\Vert A_{\gamma(t)}(x(t))-A_{\gamma(t)}(x^*) \Big\Vert \\
  &\le  \frac{t\delta(t)}{\gamma(t)}\Vert x(t)-x^* \Vert \\
  &\le \frac{t\delta(t)}{\gamma(t)}\sup_{t \geq t_0} \Vert x(t)-x^* \Vert\\
  &={O}(1/t).
\end{align*}
The last equality holds since $x$ is bounded and $\lim_{t\to +\infty}\frac{\gamma(t)}{t^2\delta(t)}>\frac{1}{4B\sigma}$. 

We now turn to deriving improved convergence rates for $\Vert \dot{x}(t) \Vert $ and $\Vert A_{\gamma(t)}(x(t)) \Vert $. 
The following lemma will be useful.

\begin{lemma}\label{lem:derivative of moreau}
Let $\mathcal H$ be a real Hilbert space and let $A:\mathcal H\rightrightarrows\mathcal H$ be maximally monotone.
For $\gamma>0$ denote the resolvent $J_{\gamma A}=(I+\gamma A)^{-1}$ and the Yosida approximation $A_\gamma=\frac{1}{\gamma}(I-J_{\gamma A})$.
Let $\delta,\gamma:[t_0,+\infty)\to(0,+\infty)$ and $x:[t_0,+\infty)\to\mathcal H$ be $C^1$. Then, for any $x^{\star}\in\mathrm{zer}\,A$ and all $t\ge t_0$,
\[
\Big\Vert \tfrac{\mathrm{d}}{\mathrm{d}t}\big[\delta(t)A_{\gamma(t)}(x(t))\big] \Big\Vert
\le \frac{\delta(t)}{\gamma(t)}\Vert \dot{x}(t) \Vert
+ \Big[\frac{2\delta(t)\vert \dot{\gamma}(t)\vert}{\gamma(t)^2}
+ \frac{\vert \dot{\delta}(t)\vert}{\gamma(t)}\Big]\Vert x(t)-x^* \Vert .
\]
\end{lemma}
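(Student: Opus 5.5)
The plan is to bound the derivative through difference quotients, because $t\mapsto A_{\gamma(t)}(x(t))$ is only known to be locally Lipschitz, not differentiable. We will estimate $\Vert \delta(t+h)A_{\gamma(t+h)}(x(t+h))-\delta(t)A_{\gamma(t)}(x(t))\Vert$ and then divide by $|h|$ and let $h\to 0$. The difference splits into three pieces:
\[
\delta(t+h)A_{\gamma(t+h)}(x(t+h))-\delta(t)A_{\gamma(t)}(x(t))
=\underbrace{\delta(t+h)\big[A_{\gamma(t+h)}(x(t+h))-A_{\gamma(t+h)}(x(t))\big]}_{(I)}
+\underbrace{\delta(t+h)\big[A_{\gamma(t+h)}(x(t))-A_{\gamma(t)}(x(t))\big]}_{(II)}
+\underbrace{\big(\delta(t+h)-\delta(t)\big)A_{\gamma(t)}(x(t))}_{(III)}.
\]

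The first and third pieces are routine.
\begin{itemize}
\item For $(I)$, the $\frac{1}{\gamma}$-Lipschitz continuity of the Yosida approximation gives $\Vert (I)\Vert\le \frac{\delta(t+h)}{\gamma(t+h)}\Vert x(t+h)-x(t)\Vert$. In the limit this contributes $\frac{\delta(t)}{\gamma(t)}\Vert\dot x(t)\Vert$.
\item For $(III)$, since $A_{\gamma(t)}(x^*)=0$ for $x^*\in\mathrm{zer}A$, Lipschitz continuity gives $\Vert A_{\gamma(t)}(x(t))\Vert\le \frac{1}{\gamma(t)}\Vert x(t)-x^*\Vert$. This contributes $\frac{|\dot\delta(t)|}{\gamma(t)}\Vert x(t)-x^*\Vert$.
\end{itemize}

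The key step is $(II)$, which requires a Lipschitz estimate of $\lambda\mapsto A_\lambda(z)$ in the parameter. The plan is to use the resolvent identity
\[
J_{\mu A}(z)=J_{\lambda A}\Big(\tfrac{\lambda}{\mu}z+\big(1-\tfrac{\lambda}{\mu}\big)J_{\mu A}(z)\Big).
\]
This identity holds because $\frac{1}{\mu}(z-J_{\mu A}z)\in A(J_{\mu A}z)$ can be rewritten as $\frac{1}{\lambda}(w-J_{\mu A}z)\in A(J_{\mu A}z)$ with $w$ the indicated argument. Nonexpansiveness of $J_{\lambda A}$ then yields $\Vert J_{\mu A}z-J_{\lambda A}z\Vert\le |1-\frac{\lambda}{\mu}|\,\Vert z-J_{\mu A}z\Vert=|\mu-\lambda|\,\Vert A_\mu z\Vert$. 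We then write
\[
A_\mu z-A_\lambda z=\Big(\tfrac1\mu-\tfrac1\lambda\Big)(z-J_{\mu A}z)+\tfrac1\lambda\big(J_{\lambda A}z-J_{\mu A}z\big),
\]
which gives $\Vert A_\mu z-A_\lambda z\Vert\le \frac{2|\mu-\lambda|}{\lambda}\Vert A_\mu z\Vert$.

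Combining this with $\Vert A_\mu z\Vert\le\frac1\mu\Vert z-x^*\Vert$, and taking $\mu=\gamma(t+h)$, $\lambda=\gamma(t)$, $z=x(t)$, we obtain
\[
\Vert (II)\Vert\le \delta(t+h)\,\frac{2|\gamma(t+h)-\gamma(t)|}{\gamma(t)\gamma(t+h)}\,\Vert x(t)-x^*\Vert .
\]
Dividing by $|h|$ and letting $h\to0$, using the continuity of $\delta,\gamma$ and the differentiability of $\gamma$ and $x$, this contributes $\frac{2\delta(t)|\dot\gamma(t)|}{\gamma(t)^2}\Vert x(t)-x^*\Vert$.

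Summing the three limits gives the stated bound. This bound holds for the $\limsup$ of the difference quotients. It therefore bounds the derivative wherever the derivative exists, which is a.e. since the map is locally Lipschitz, and everywhere if differentiability is assumed.

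The main obstacle is the parameter-Lipschitz estimate in $(II)$, specifically getting the constant $2$ via the resolvent identity. Once that is done, everything else is triangle inequality and passage to the limit. Lemma~\ref{lemma: derivative-gradient} then follows by taking $A=\partial f$.
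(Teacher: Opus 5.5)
Your proposal is correct and follows essentially the same route as the paper. Both use a three-term telescoping of the difference quotient (you only place the intermediate points in a different order), the same resolvent identity leading to the parameter-perturbation bound $\Vert A_\mu z - A_\lambda z\Vert \le \frac{2|\mu-\lambda|}{\lambda}\Vert A_\mu z\Vert$, and the same bound $\Vert A_\gamma z\Vert\le\frac{1}{\gamma}\Vert z-x^*\Vert$ obtained from a zero of $A$. Your remark that the estimate controls the $\limsup$ of the difference quotients, and therefore bounds the derivative wherever it exists, is slightly more careful than the paper, which passes to the limit without addressing differentiability.
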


\begin{proof}
We use standard facts: for each $\gamma>0$, $J_{\gamma A}$ is firmly nonexpansive, and $A_\gamma$ is $1/\gamma$-Lipschitz and $\gamma$-cocoercive, i.e.,
\begin{equation}\label{eq:Lip-coco}
\|A_\gamma x-A_\gamma y\|\le \frac{1}{\gamma}\|x-y\|, \qquad
\langle A_\gamma x-A_\gamma y,\,x-y\rangle \ge \gamma\|A_\gamma x-A_\gamma y\|^2 .
\end{equation}

\textbf{Step 1: A basic bound via a zero of $A$.}
Fix $x^{\star}\in\mathrm{zer}\,A$ (so $J_{\gamma A}x^{\star}=x^{\star}$ and $A_\gamma(x^{\star})=0$). Apply \eqref{eq:Lip-coco} with $y=x^{\star}$ and use Cauchy--Schwarz inequality:
\begin{equation}
\gamma\|A_\gamma(x)\|^2 \le \langle A_\gamma(x),x-x^{\star}\rangle
\le \|A_\gamma(x)\|\,\|x-x^{\star}\| \;\Rightarrow\;
{\;\|A_\gamma(x)\|\le \frac{1}{\gamma}\|x-x^{\star}\|.\;}
\label{eq:basic-bound}
\end{equation}

\textbf{Step 2: Resolvent identity and parameter perturbation.}
We first show the resolvent identity: for any $\alpha,\beta>0$ and $x\in\mathcal H$,
\begin{equation}\label{eq:RI}
{\quad
J_{\alpha A}
= J_{\beta A}\!\left(\tfrac{\beta}{\alpha}I+\Big(1-\tfrac{\beta}{\alpha}\Big)J_{\alpha A}\right).
\quad}
\end{equation}
Indeed, let $u=J_{\alpha A}x$, so $(x-u)/\alpha\in Au$. Multiplying by $\beta$ gives
$(\beta/\alpha)(x-u)\in \beta Au$, hence
\[
u=(I+\beta A)^{-1}\!\Big(\tfrac{\beta}{\alpha}x+\Big(1-\tfrac{\beta}{\alpha}\Big)u\Big)
=J_{\beta A}\!\Big(\tfrac{\beta}{\alpha}x+\Big(1-\tfrac{\beta}{\alpha}\Big)u\Big),
\]
which is \eqref{eq:RI}.

From \eqref{eq:RI} we derive the parameter perturbation bound: for $\alpha,\beta>0$ and all $x$,
\begin{equation}\label{eq:PP}
{\quad
\|A_\beta(x)-A_\alpha(x)\|
\le \frac{2|\beta-\alpha|}{\alpha}\,\|A_\beta(x)\| .
\quad}
\end{equation}
To see this, set $u=J_{\alpha A}x$ and $v=J_{\beta A}x$. Using \eqref{eq:RI} with $\alpha,\beta$ swapped gives
\[
v=J_{\alpha A}\!\Big(\tfrac{\alpha}{\beta}x+\Big(1-\tfrac{\alpha}{\beta}\Big)v\Big).
\]
By nonexpansiveness of $J_{\alpha A}$,
\begin{equation}\label{eq:v-u}
\|v-u\|
\le \left\|\Big(\tfrac{\alpha}{\beta}-1\Big)x+\Big(1-\tfrac{\alpha}{\beta}\Big)v\right\|
= \frac{|\alpha-\beta|}{\beta}\,\|x-v\|.
\end{equation}
Now
\[
A_\beta(x)-A_\alpha(x)
= \frac{x-v}{\beta}-\frac{x-u}{\alpha}
= \Big(\frac{1}{\beta}-\frac{1}{\alpha}\Big)(x-v) + \frac{1}{\alpha}(v-u),
\]
whence, using \eqref{eq:v-u},
\[
\|A_\beta(x)-A_\alpha(x)\|
\le \frac{|\alpha-\beta|}{\alpha\beta}\|x-v\|
+ \frac{1}{\alpha}\cdot \frac{|\alpha-\beta|}{\beta}\|x-v\|
= \frac{2|\alpha-\beta|}{\alpha\beta}\|x-v\|.
\]
Since $\|x-v\|/\beta=\|A_\beta(x)\|$, we obtain \eqref{eq:PP}.

\textbf{Step 3: Difference quotient and limit.}
Define $g(t):=\delta(t)A_{\gamma(t)}(x(t))$. For small $h\neq0$,
\[
\begin{aligned}
\|g(t+h)-g(t)\|
&= \|\delta(t+h)A_{\gamma(t+h)}(x(t+h))-\delta(t)A_{\gamma(t)}(x(t))\|\\
&\le \underbrace{\delta(t)\|A_{\gamma(t)}(x(t+h))-A_{\gamma(t)}(x(t))\|}_{\mathrm{(I)}}\\
&\quad + \underbrace{\delta(t)\|A_{\gamma(t+h)}(x(t+h))-A_{\gamma(t)}(x(t+h))\|}_{\mathrm{(II)}}\\
&\quad + \underbrace{|\delta(t+h)-\delta(t)|\,\|A_{\gamma(t+h)}(x(t+h))\|}_{\mathrm{(III)}}.
\end{aligned}
\]
Using the $1/\gamma$-Lipschitzness in $x$,
\[
\mathrm{(I)} \le \frac{\delta(t)}{\gamma(t)}\,\|x(t+h)-x(t)\|.
\]
Using \eqref{eq:PP} with $\alpha=\gamma(t)$ and $\beta=\gamma(t+h)$,
\[
\mathrm{(II)} \le \frac{2\delta(t)|\gamma(t+h)-\gamma(t)|}{\gamma(t)}\,\|A_{\gamma(t+h)}(x(t+h))\|.
\]
Applying \eqref{eq:basic-bound} at $(x,\gamma)=(x(t+h),\gamma(t+h))$ yields
\[
\mathrm{(II)} \le \frac{2\delta(t)|\gamma(t+h)-\gamma(t)|}{\gamma(t)\gamma(t+h)}\,\|x(t+h)-x^*\|.
\]
Similarly, by \eqref{eq:basic-bound},
\[
\mathrm{(III)} \le \frac{|\delta(t+h)-\delta(t)|}{\gamma(t+h)}\,\|x(t+h)-x^*\|.
\]

Divide by $|h|$ and let $h\to0$. Since $x,\gamma,\delta$ are $C^1$ and $(x,\gamma)\mapsto A_\gamma(x)$ is continuous, we obtain
\[
\Big\Vert \tfrac{\mathrm{d}}{\mathrm{d}t}\big[\delta(t)A_{\gamma(t)}(x(t))\big] \Big\Vert
\le \frac{\delta(t)}{\gamma(t)}\Vert \dot{x}(t) \Vert
+ \Big[\frac{2\delta(t)\vert \dot{\gamma}(t)\vert}{\gamma(t)^2}
+ \frac{\vert \dot{\delta}(t)\vert}{\gamma(t)}\Big]\Vert x(t)-x^* \Vert,
\]
as claimed.
\end{proof}
By Lemma \ref{lem:derivative of moreau} and Assumption \ref{ass:D}, we obtain
\begin{align*}
\Big \Vert \frac{\mathrm{d}}{\mathrm{d}t}\Big[\delta(t)A_{\gamma(t)}(x(t))\Big] \Big \Vert ={O}\Big(\frac{1}{t^3}\Big). 
\end{align*}
A consequence of this fact is that 
\begin{align}
\label{velocity convergence:ingre2}
\int_{t_0}^{+\infty} t^3\Big \Vert \frac{\mathrm{d}}{\mathrm{d}t}\Big[\delta(t)A_{\gamma(t)}(x(t))\Big] \Big \Vert^2\mathrm{d}t <+\infty.
\end{align}
Further, we have by the definition of the dynamical system, 
\begin{align*}
    t^3\Vert \ddot{x}(t)\Vert^2 &=t^3 \Big\Vert -\frac{\alpha}{t}\dot{x}(t)-\beta\frac{\mathrm{d}}{\mathrm{d}t}\Big[\delta(t)A_{\gamma(t)}(x(t))\Big]-\Big(1+\frac{\beta}{t}\Big)\delta(t)A_{\gamma(t)}(x(t))\Big\Vert^2 \\
    &\le 3\alpha^2t\Vert \dot{x}(t) \Vert^2+3\beta^2t^3\Big \Vert \frac{\mathrm{d}}{\mathrm{d}t}\Big[\delta(t)A_{\gamma(t)}(x(t))\Big] \Big \Vert^2+3t(t+\beta)^2\delta(t)^2\Big \Vert A_{\gamma(t)}(x(t)) \Big \Vert^2.
\end{align*}
Now, from \eqref{velocity convergence:ingre1}, \eqref{velocity convergence:ingre2}, and \eqref{velocity convergence:ingre3}, we deduce that the right-hand side of the above inequality belongs to $L^1([t_0,+\infty))$, and hence 
\begin{align}
\label{acceleraton:integral}
t^3\Vert \ddot{x}(t)\Vert^2 \in L^1([t_0,+\infty)).
\end{align}
We are now ready to prove that the rate of convergence for $\Vert \dot{x}(t)\Vert$ is actually $\Vert \dot{x}(t)\Vert=o\big(\frac{1}{t}\big)$. \\
Let us recall from \eqref{velocity convergence:ingre1} that $\int_{t_0}^{+\infty} t\Vert \dot{x}(t) \Vert^2 \mathrm{d}t < +\infty$. So $\liminf_{t\rightarrow+\infty}t\Vert \dot{x}(t) \Vert =0$. Hence, we only need to prove that $\lim_{t \rightarrow+\infty}t\Vert \dot{x}(t) \Vert$ exists. Indeed, 
\begin{align*}
\frac{\mathrm{d}}{\mathrm{d}t} t^2\Vert\dot{x}(t)\Vert^2 &=2t\Vert \dot{x}(t) \Vert^2+2t^2\langle \dot{x}(t),\ddot{x}(t)\rangle \\
&\le3t\Vert \dot{x}(t) \Vert^2 + t^3\Vert \ddot{x}(t) \Vert^2.
\end{align*}
From \eqref{velocity convergence:ingre1} and \eqref{acceleraton:integral}, we deduce that the right-hand side of the above inequality also belongs to $L^1([t_0,+\infty))$. This implies that $\lim_{t \rightarrow+\infty}t^2\Vert \dot{x}(t) \Vert^2$ exists, and so does $\lim_{t \rightarrow+\infty}t\Vert \dot{x}(t) \Vert$. 

We now turn to improving the rate $\Big\Vert A_{\gamma(t)}(x(t)) \Big\Vert={O}\Big(\frac{1}{t^2\delta(t)}\Big)$ to $\Big\Vert A_{\gamma(t)}(x(t)) \Big\Vert=o\Big(\frac{1}{t^2\delta(t)}\Big)$. 

From \eqref{velocity convergence:ingre3}, we have 
\begin{align*}
    \liminf_{t \rightarrow+\infty} t^2\delta(t)\Big\Vert A_{\gamma(t)}(x(t)) \Big\Vert=0.
\end{align*}
Therefore, it is sufficient to show that $\lim_{t \rightarrow+\infty} t^2\delta(t)\Big\Vert A_{\gamma(t)}(x(t)) \Big\Vert$ exists. To this end, let us set
\begin{align*}
    \xi(t)=\Vert t^2\delta(t)A_{\gamma(t)}(x(t)) \Vert^2.
\end{align*}
We have the following chain of implications,
\begin{align*}
    \frac{\mathrm{d}}{\mathrm{d}t}\xi(t) 
    &=2\Big\langle t^2\delta(t)A_{\gamma(t)}(x(t)), 2t\delta(t)A_{\gamma(t)}(x(t))+t^2\frac{\mathrm{d}}{\mathrm{d}t}\delta(t)A_{\gamma(t)}(x(t)) \Big\rangle \\
    &=4t^3\delta(t)^2\Vert A_{\gamma(t)}(x(t)) \Vert^2+2t^4\delta(t)\Big\langle A_{\gamma(t)} (x(t)),\frac{\mathrm{d}}{\mathrm{d}t}\delta(t)A_{\gamma(t)}(x(t)) \Big\rangle\\
    &\le 4t^3\delta(t)^2\Vert A_{\gamma(t)}(x(t)) \Vert^2+2t^4\delta(t)\Vert A_{\gamma(t)} (x(t))\Vert \Big\Vert\frac{\mathrm{d}}{\mathrm{d}t}\delta(t)A_{\gamma(t)}(x(t)) \Big\Vert \\ 
    &\le  4t^3\delta(t)^2\Vert A_{\gamma(t)}(x(t)) \Vert^2+\frac{2t^4\delta(t)^2}{\gamma(t)}\Vert A_{\gamma(t)}(x(t)) \Vert \Vert \dot{x}(t) \Vert\\
    &\hspace{5mm} +\Big(\frac{4t^4\delta(t)^2\vert \dot{\gamma}(t)\vert}{\gamma(t)}+2t^4\delta(t)\vert \dot{\delta}(t)\vert\Big)\Vert A_{\gamma(t)}(x(t)) \Vert^2,
\end{align*}
where the last inequality follows from Lemma \ref{lem:derivative of moreau}. 

From \ref{velocity convergence:ingre3}, we have that
\begin{align*}
4t^3\delta(t)^2\Vert A_{\gamma(t)}(x(t)) \Vert^2 \in L^1([t_0,+\infty)).   
\end{align*}
In addition, from Assumption \ref{ass:D}\textit{(ii)}, \textit{(iii)}, we obtain the rate 
\begin{align*}
    \Big(\frac{4t^4\delta(t)^2\vert \dot{\gamma}(t)\vert}{\gamma(t)}+2t^4\delta(t)\vert \dot{\delta}(t)\vert\Big)\Vert A_{\gamma(t)}(x(t)) \Vert^2 = O\Big(t^3\delta(t)^2\Vert A_{\gamma(t)}(x(t)) \Vert^2\Big),
\end{align*}
which belongs to $L^1([t_0,+\infty))$ due to \eqref{velocity convergence:ingre3}.

Moreover,  
\begin{align*}
\frac{2t^4\delta(t)^2}{\gamma(t)}\Vert A_{\gamma(t)}(x(t)) \Vert \Vert \dot{x}(t) \Vert &=\frac{2t^4\delta(t)^2}{\gamma(t)}\sqrt{\frac{\gamma(t)}{t}}\Vert A_{\gamma(t)}(x(t)) \Vert \sqrt{\frac{t}{\gamma(t)}}\Vert \dot{x}(t) \Vert \\
&\le \frac{t^4\delta(t)^2}{\gamma(t)}\Big(\frac{\gamma(t)}{t}\Vert A_{\gamma(t)}(x(t)) \Vert^2+\frac{t}{\gamma(t)}\Vert \dot{x}(t) \Vert^2\Big) \\
&=t^3\delta(t)^2\Vert A_{\gamma(t)}(x(t)) \Vert^2+\frac{t^5\delta(t)^2}{\gamma(t)^2}\Vert \dot{x}(t) \Vert^2 \\
&=t^3\delta(t)^2\Vert A_{\gamma(t)}(x(t)) \Vert^2+O(t\Vert \dot{x}(t) \Vert^2) \text{ (thanks to Assumption \ref{ass:D}\textit{(i)})}
\end{align*}
This, combined with \eqref{velocity convergence:ingre1} and \eqref{velocity convergence:ingre3}, implies 
\begin{align*}
 \frac{2t^4\delta(t)^2}{\gamma(t)}\Vert A_{\gamma(t)}(x(t)) \Vert \Vert \dot{x}(t) \Vert  \in L^1([t_0,+\infty)).   
\end{align*}
As a result, 
\begin{align*}
    \frac{\mathrm{d}}{\mathrm{d}t}\xi(t) 
    &\le 4t^3\delta(t)^2\Vert A_{\gamma(t)}(x(t)) \Vert^2+\frac{2t^4\delta(t)^2}{\gamma(t)}\Vert A_{\gamma(t)}(x(t)) \Vert \Vert \dot{x}(t) \Vert+\\&\hskip1cm+\Big(\frac{4t^4\delta(t)^2\vert \dot{\gamma}(t)\vert}{\gamma(t)}+2t^4\delta(t)\vert \dot{\delta}(t)\vert\Big)\Vert A_{\gamma(t)}(x(t)) \Vert^2 
    \in L^1([t_0,+\infty)). 
\end{align*}
Hence, we can classically conclude that $\lim_{t \rightarrow+\infty} \xi(t)$ exists, and so does $\lim_{t \rightarrow+\infty} t^2\delta(t)\Big\Vert A_{\gamma(t)}(x(t)) \Big\Vert$. \\
Following the same lines of proof in the optimization setting, one can show the weak convergence of $x(t)$ as $t \to +\infty$ to a zero of $A$. We, therefore, omit the details.
Let us summarize what has been shown in the following theorem.
\begin{theorem}
Under the standing Assumptions \ref{ass:C} and \ref{ass:D}, and for any solution trajectory $x:[t_0,+\infty) \to \mathcal{H}$ to $(\text{HR-MMD})_{\alpha,\beta}$, we have 
\begin{enumerate}[label=(\roman*)]
        \item $\displaystyle \Vert A_{\gamma(t)}(x(t))\Vert=o\Big(\frac{1}{t^2\delta(t)}\Big).$
        \item $\Vert \dot{x}(t) \Vert = o(\frac{1}{t})$.
        \item $\displaystyle \int_{t_0}^{+\infty}t\Vert \dot{x}(t)\Vert^2\mathrm{d}t < +\infty$. 
        \item $\displaystyle \int_{t_0}^{+\infty}t^3\delta(t)^2\Vert A_{\gamma(t)}(x(t))\Vert^2 \mathrm{d}t <+\infty$.
        \item $x(t)$ converges weakly as $t \to +\infty$, and its limit is a zero of $A$.
    \end{enumerate}
\end{theorem}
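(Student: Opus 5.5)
Most of the theorem is already established in the discussion preceding it, so the plan is to collect those estimates and fill in the one omitted argument, item \textit{(v)}.

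\textbf{Items (i)--(iv).} I use the Lyapunov function $\mathcal{V}_A$ with $\eta=\sigma(\alpha-\sigma-1)/2$, which cancels the cross term $\langle x-x^*,\dot x\rangle$. Cocoercivity of $A_{\gamma(t)}$ then gives $\dot{\mathcal{V}}_A\le\Omega(t)$. Lemma~\ref{lemma:diag-majorant} applies because Assumption~\ref{ass:D}\textit{(i)} makes $\Delta(t)<0$ for large $t$. This yields \eqref{ine:derivative of Lyapunov}, and integrating it gives boundedness of $x$ together with \eqref{velocity convergence:ingre1} and \eqref{velocity convergence:ingre3}. The latter holds because $n(t)/(t^3\delta^2)\to\sigma L+\frac{1}{4(\varepsilon-B)}>0$. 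This settles \textit{(iii)} and \textit{(iv)}; on $[t_0,t_1]$ the integrands are continuous, so the integrals from $t_0$ are finite. Lemma~\ref{lem:derivative of moreau}, combined with $\gamma\gtrsim t^2\delta$, $|\dot\gamma|/\gamma=O(1/t)$ and $|\dot\delta|\le M\delta/t$, gives $\|\frac{d}{dt}[\delta A_{\gamma}(x)]\|=O(t^{-3})$. Hence $t^3\|\ddot x\|^2\in L^1$. The arguments on $\frac{d}{dt}(t^2\|\dot x\|^2)$ and on $\dot\xi$ with $\xi=\|t^2\delta A_\gamma(x)\|^2$ show that both limits exist. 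Since the integrability estimates force the corresponding $\liminf$'s to be zero, the limits are zero, which proves \textit{(i)} and \textit{(ii)}.

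\textbf{Item (v).} I apply Opial's lemma with $S=\mathrm{zer}A$.

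For the first condition, fix $x^*\in\mathrm{zer}A$ and set $h(t)=\frac12\|x(t)-x^*\|^2$. Using $(\text{HR-MMD})_{\alpha,\beta}$ exactly as in the optimization case gives
\[
t\ddot h+\alpha\dot h+(t+\beta)\delta\langle A_\gamma(x),x-x^*\rangle=t\|\dot x\|^2-\beta t\Big\langle x-x^*,\tfrac{d}{dt}[\delta A_\gamma(x)]\Big\rangle .
\]
Cocoercivity with $A_\gamma(x^*)=0$ makes the third term on the left nonnegative; it plays the role of $\theta$. The right-hand side is bounded by $k(t)=t\|\dot x\|^2+\beta t\|x-x^*\|\,O(t^{-3})$, which is in $L^1$ since $x$ is bounded. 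Here $\alpha>1$ is exactly the hypothesis of the lemma of \cite{AttouchPeyp2019b}, and that lemma gives existence of $\lim h(t)$.

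For the second condition, take $t_n\to\infty$ with $x(t_n)\rightharpoonup\bar x$. By \textit{(i)} and Assumption~\ref{ass:D}\textit{(i)}, $y(t)=x(t)-J_{\gamma(t)A}x(t)=\gamma(t)A_{\gamma(t)}(x(t))$ satisfies $\|y(t)\|=o(\gamma/(t^2\delta))$. Also $A_{\gamma(t)}(x(t))\in A(x(t)-y(t))$, and $A_{\gamma(t_n)}(x(t_n))\to0$ strongly.

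\textbf{Main obstacle.} The point that needs care is showing $y(t_n)\to0$, so that $x(t_n)-y(t_n)\rightharpoonup\bar x$. Assumption~\ref{ass:D} gives no upper bound on $\gamma/(t^2\delta)$, so the $o$-estimate above does not suffice unless $L<\infty$. If $L<\infty$ it does, and weak--strong closedness of the graph of the maximally monotone $A$ then gives $0\in A\bar x$.

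If $L=+\infty$ I would try instead to use monotonicity directly. For any $(u,w)\in\operatorname{gra}A$, write $\langle A_\gamma x-w,\,x-y-u\rangle\ge0$, i.e.
\[
\langle A_\gamma x-w,\,x-u\rangle\ge\gamma\langle A_\gamma x-w,\,A_\gamma x\rangle=\gamma\|A_\gamma x\|^2-\gamma\langle w,A_\gamma x\rangle .
\]
The term $\gamma\|A_\gamma x\|^2$ is nonnegative and can be dropped. The term $\gamma\langle w,A_\gamma x\rangle$ still requires $\gamma\|A_\gamma x\|\to0$; it would follow from the bound $\|A_\gamma x\|\le\|x-x^*\|/\gamma$ only if $x(t)\to x^*$ strongly, which is not available. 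So in the unbounded case I would simply impose $L<\infty$, where the closedness argument above already works.
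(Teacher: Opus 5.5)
Your proposal is correct and follows the paper's route. The same Lyapunov function $\mathcal{V}_A$ with $\eta=\sigma(\alpha-\sigma-1)/2$, Lemma~\ref{lemma:diag-majorant}, and the limit-existence arguments for $t^2\|\dot x(t)\|^2$ and $\xi(t)$ give (i)--(iv), and (v) is exactly the Opial argument from the optimization case that the paper omits.

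Your restriction to $L<\infty$ is not a gap but the right reading of Assumption~\ref{ass:D}(i), which the paper tacitly uses: it manipulates $L$ as a real number (e.g.\ in $B-\frac{1}{4\sigma L}$), and Theorem~\ref{first convergence result}(vi) needs the analogous bound $\liminf_{t\to+\infty}t^2\delta(t)/\gamma(t)>0$. With $L=+\infty$ item (v) is in fact false. For instance, take $\mathcal{H}=\mathbb{R}$, $Ax=x$, $\delta(t)=t$, $\gamma(t)=t^4$; then $\delta(t)A_{\gamma(t)}(x)=\frac{t}{1+t^4}\,x$, so $(\text{HR-MMD})_{\alpha,\beta}$ is an $O(t^{-3})$ perturbation of $\ddot x+\frac{\alpha}{t}\dot x=0$. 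It therefore has solutions tending to a nonzero constant, which is not in $\mathrm{zer}\,A=\{0\}$.
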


\section{Discussion on optimization vs. maximally monotone operators}
The analyses for nonsmooth convex minimization and for maximally monotone operators are developed in parallel since they rely on the same regularization mechanism. In both cases, the nonsmooth object is replaced by a single-valued and Lipschitz continuous approximation: the gradient of the Moreau envelope $\nabla f_{\gamma(t)}$ in the optimization setting, and the Yosida approximation $A_{\gamma(t)}$ in the operator setting. This makes it possible to define a high-resolution inertial dynamic with Hessian-driven damping and time-rescaled forcing without assuming second-order smoothness.

When $A = \partial f$, the two frameworks coincide exactly. Indeed, the Yosida approximation reduces to $A_\gamma = \nabla f_\gamma$, and the operator-valued dynamic (HR-MMD)$_{\alpha,\beta}$ becomes identical to (NS-HR)$_{\alpha,\beta}$. Although the dynamics are the same, the viewpoints are slightly different. In the optimization case, the analysis naturally focuses on the decay of function values $f_{\gamma(t)}(x(t)) - f^\star$, while in the operator framework the emphasis is on the decay of the residual $\|A_{\gamma(t)}(x(t))\|$, which is the appropriate notion of optimality for general monotone inclusions.

This comparison shows that the proposed dynamics provide a unified framework. Nonsmooth convex minimization appears as a particular case of the monotone inclusion problem, where additional convex-analytic structure allows for sharper Lyapunov estimates. At the same time, the operator formulation clarifies that the acceleration mechanisms studied here are not specific to objective minimization, but extend naturally to general monotone operators.

It is natural to ask how the standing assumptions compare between the two frameworks considered in this paper:
the convex optimization case (Assumption~\ref{ass:B}) and the maximally monotone inclusion case
(Assumption~\ref{ass:D}). Although the dynamics have the same structure, the sufficient conditions on the
time-dependent parameters $(\delta,\gamma)$ are not the same.

\smallskip
\noindent
\textbf{Optimization case.}
In Assumption~\ref{ass:B}, the key requirement is a lower bound on the {rate of variation} of the smoothing
parameter:
\[
\liminf_{t\to\infty}\frac{\dot\gamma(t)}{t\,\delta(t)} > 0.
\]
In addition, the growth of $\delta$ is controlled by $\alpha$ through
Assumption~\ref{ass:B}\textit{(iv)}, namely
\[
0\le\frac{t\dot\delta(t)}{\delta(t)}\le \alpha-3-\zeta,
\]
so in this setting $\delta$ cannot grow too fast. In particular, for polynomial schedules
\[
\delta(t)=t^p,
\]
this condition becomes simply
\[
\frac{t\dot\delta(t)}{\delta(t)}=p,
\]
so one must have
\[
0\le p<\alpha-3.
\]
Therefore, in the optimization case, the time-rescaling $\delta$ is explicitly coupled with the damping
parameter $\alpha$.

\smallskip
\noindent
\textbf{Maximally monotone case.}
In Assumption~\ref{ass:D}(i), the role of $\dot\gamma(t)$ is replaced by a {size constraint} involving $\gamma(t)$ itself:
\[
\lim_{t\to\infty}\frac{\gamma(t)}{t^2\,\delta(t)}>\frac{1}{4(\alpha-\sigma-1)\sigma}.
\]
Thus, in the monotone case, what matters is that $\gamma(t)$ be sufficiently large relative to
$t^2\delta(t)$.
Compared with the optimization case, the growth control on $\delta$ is more flexible, since it only requires
\[
0<\frac{t\dot\delta(t)}{\delta(t)}\le M,
\]
without an explicit coupling with $\alpha$.

\smallskip
\noindent
\textbf{Incomparability.}
The two families of assumptions are, in general, incomparable. This is already visible for the polynomial
choice
\[
\delta(t)=t^p,\qquad \gamma(t)=c\,t^{p+2}.
\]
In this case,
\[
\frac{\dot\gamma(t)}{t\delta(t)}=c(p+2),\qquad
\frac{\gamma(t)}{t^2\delta(t)}=c,\qquad
\frac{t\dot\delta(t)}{\delta(t)}=p.
\]

\textit{Optimization may hold while the monotone condition fails.}
Assume
\[
0\le p<\alpha-3.
\]
Then Assumption~\ref{ass:B}\textit{(iv)} is satisfied, and moreover
\[
\frac{\dot\gamma(t)}{t\delta(t)}=c(p+2)>0
\]
for every $c>0$, so Assumption~\ref{ass:B}\textit{(i)} is also satisfied. However, if $c$ is chosen too small,
namely
\[
0<c<\frac{1}{4(\alpha-\sigma-1)\sigma},
\]
then
\[
\frac{\gamma(t)}{t^2\delta(t)}=c
\]
stays below the threshold required in Assumption~\ref{ass:D}\textit{(i)}. Hence the optimization
assumptions hold, whereas the monotone ones fail. The failure is therefore very explicit: $\gamma$
varies fast enough for the optimization proof, but its {magnitude} is still too small for the quadratic
estimate used in the monotone case.

\medskip
\noindent
\textit{Conversely, the monotone assumptions may hold while the optimization ones fail.}
Assume now that
\[
p\geq\alpha-3.
\]
Then
\[
\frac{t\dot\delta(t)}{\delta(t)}=p\geq\alpha-3,
\]
so Assumption~\ref{ass:B}\textit{(iv)} is violated: in the optimization setting, $\delta$ grows too fast relative
to $\alpha$. On the other hand, this does not prevent the monotone assumptions from being satisfied,
because Assumption~\ref{ass:D}\textit{(ii)} only asks for
\[
\frac{t\dot\delta(t)}{\delta(t)}\le M,
\]
which is true here by taking, for instance, any $M\ge p$. If in addition
\[
c>\frac{1}{4(\alpha-\sigma-1)\sigma},
\]
then Assumption~\ref{ass:D}\textit{(i)} is also satisfied. Thus the monotone framework admits some
polynomial schedules that are excluded in the optimization framework.

In summary, the incompatibility comes from two distinct sources. In the optimization case, one needs
a lower bound on the {variation rate} $\dot\gamma(t)/(t\delta(t))$ together with a growth restriction
on $\delta$ tied to $\alpha$. In the monotone case, one instead needs a lower bound on the {size}
$\gamma(t)/(t^2\delta(t))$, while the growth of $\delta$ is controlled only in a much looser way. 

\section{Discretization of the (NS-HR)$_{\alpha,\beta}$ dynamic}
In this section, we derive a discrete-time algorithm associated with the continuous dynamic \((\mathrm{NS\text{-}HR})_{\alpha,\beta}\). Our aim is to construct a discretization that reflects the second-order inertial structure of the system and, at the same time, remains tractable in the nonsmooth setting. As will be seen below, although the discretization leads to an implicit relation involving the regularized gradient, this relation can be resolved explicitly through a single proximal computation at each iteration. This yields a practical proximal scheme that can be regarded as the algorithmic counterpart of the continuous high-resolution model. 
\subsection{Second-order discretization}
We recall the continuous model (NS-HR)$_{\alpha,\beta}$
\begin{equation}
\ddot x(t)
+\frac{\alpha}{t}\dot x(t)
+\beta \frac{d}{dt}\!\Big[\delta(t)\nabla f_{\gamma(t)}(x(t))\Big]
+\Big(1+\frac{\beta}{t}\Big)\delta(t)\nabla f_{\gamma(t)}(x(t))
=0.
\end{equation}
Let $t_k = kh$, and define
\[
\delta_k := \delta(t_k),
\qquad
\gamma_k := \gamma(t_k),
\qquad
g_k := \nabla f_{\gamma_k}(x_k)
=\frac{1}{\gamma_k}\Big(x_k-\operatorname{prox}_{\gamma_k f}(x_k)\Big).
\]

Using the discretization
\[
\ddot x(t)
\approx
\frac{x_{k+1}-2x_k+x_{k-1}}{h^2},
\]
\[
\frac{\alpha}{t}\dot x(t)
\approx
\frac{\alpha}{kh}(x_k-x_{k-1}),
\]
\[
\frac{d}{dt}\!\Big[\delta\nabla f_\gamma(x)\Big](t)
\approx
\frac{\delta_k g_k-\delta_{k-1}g_{k-1}}{h},
\]
\[
\Big(1+\frac{\beta}{t}\Big)\delta(t)\nabla f_{\gamma(t)}(x(t))
\approx \frac{\beta}{kh}\delta_{k-1}g_{k-1} + \delta_{k+1}g_{k+1},
\]
we obtain
\begin{align}
\frac{x_{k+1}-2x_k+x_{k-1}}{h^2}
&+
\frac{\alpha}{kh}(x_k-x_{k-1})
+
\beta \frac{\delta_k g_k-\delta_{k-1}g_{k-1}}{h}
\nonumber \\
&+
\frac{\beta}{kh}\delta_{k-1}g_{k-1} + \delta_{k+1}g_{k+1}
=0.
\tag{D-NSHR}
\end{align}

Multiplying by $h^2$ and rearranging yields
\begin{equation}
x_{k+1}+s_{k+1}g_{k+1}
=
r_{k+1},
\tag{$\ast$}
\end{equation}
where
\[
s_{k+1}
:=
\delta_{k+1}h^2,
\]
\[
r_{k+1}
:=
2x_k-x_{k-1}
-\frac{\alpha h}{k}(x_k-x_{k-1})
-\beta h(\delta_k g_k-\delta_{k-1}g_{k-1})-\frac{\beta h}{k}\delta_{k-1}g_{k-1}.
\]

\subsection{Resolution via a single proximal step}

We start from the implicit equation obtained after discretization:
\begin{equation}
x_{k+1}
+
s_{k+1}\nabla f_{\gamma_{k+1}}(x_{k+1})
=
r_{k+1}.
\tag{$\ast$}
\end{equation}

Recall that the Moreau envelope gradient satisfies
\[
\nabla f_{\gamma}(x)
=
\frac{1}{\gamma}
\Big(x-\operatorname{prox}_{\gamma f}(x)\Big).
\]

Define
\[
u_{k+1}
:=
\operatorname{prox}_{\gamma_{k+1} f}(x_{k+1}).
\]

By definition of the proximal operator,
\[
x_{k+1}
=
u_{k+1}
+
\gamma_{k+1} g_{k+1},
\qquad
g_{k+1}\in\partial f(u_{k+1}),
\]
and
\[
g_{k+1}
=
\nabla f_{\gamma_{k+1}}(x_{k+1}).
\]

Plugging $x_{k+1}=u_{k+1}+\gamma_{k+1} g_{k+1}$ into ($\ast$) gives
\[
u_{k+1}
+
\gamma_{k+1} g_{k+1}
+
s_{k+1} g_{k+1}
=
r_{k+1}.
\]

Thus
\[
r_{k+1}-u_{k+1}
=
(\gamma_{k+1}+s_{k+1}) g_{k+1}.
\]

Since $g_{k+1}\in\partial f(u_{k+1})$, we obtain
\[
r_{k+1}-u_{k+1}
\in
(\gamma_{k+1}+s_{k+1}) \partial f(u_{k+1}).
\]

This is exactly the optimality condition of the proximal operator with
parameter
\[
\lambda_{k+1}
:=
\gamma_{k+1}+s_{k+1}.
\]

Therefore,
\[
u_{k+1}
=
\operatorname{prox}_{\lambda_{k+1} f}(r_{k+1}).
\]

Once $u_{k+1}$ is known,
\[
g_{k+1}
=
\frac{r_{k+1}-u_{k+1}}{\lambda_{k+1}},
\]
and
\[
x_{k+1}
=
u_{k+1}
+
\gamma_{k+1} g_{k+1}
=
\frac{s_{k+1}}{\lambda_{k+1}}u_{k+1}
+
\frac{\gamma_{k+1}}{\lambda_{k+1}}r_{k+1}.
\]

\medskip

\noindent

\subsection{Algorithm}

Given $x_0,x_1, \alpha>0, \beta>0, h>0$, and sequences $\{\delta_k\},\{\gamma_k\}$:

For $k\ge1$:

\begin{enumerate}
\item Compute
\[
p_k
=
\operatorname{prox}_{\gamma_k f}(x_k),
\qquad
g_k
=
\frac{x_k-p_k}{\gamma_k}.
\]

\item Form
\[
r_{k+1}
:=
2x_k-x_{k-1}
-\frac{\alpha h}{k}(x_k-x_{k-1})
-\beta h(\delta_k g_k-\delta_{k-1}g_{k-1})-\frac{\beta h}{k}\delta_{k-1}g_{k-1}.
\]

\item Compute
\[
s_{k+1}
=
\delta_{k+1}h^2,
\qquad
\lambda_{k+1}
=
\gamma_{k+1}+s_{k+1},
\]
\[
u_{k+1}
=
\operatorname{prox}_{\lambda_{k+1}f}(r_{k+1}),
\]
\[
x_{k+1}
=
\frac{s_{k+1}}{\lambda_{k+1}}u_{k+1}
+
\frac{\gamma_{k+1}}{\lambda_{k+1}}r_{k+1}.
\]
\end{enumerate}
We emphasize that the purpose of this section is only to derive a natural time-discrete counterpart of the continuous dynamic \((\mathrm{NS\text{-}HR})_{\alpha,\beta}\). Preliminary numerical experiments show that the algorithm converges under suitable parameter choices. However, a rigorous Lyapunov analysis of the resulting scheme is beyond the scope of the present paper and will be the topic of  future research.
\section{Numerical Experiments}

In this section, we investigate the numerical behavior of the proposed dynamic (NS-HR)$_{\alpha,\beta}$ and compare it with several related continuous-time models. Our goals are threefold: first, to study the influence of the parameter $\beta$ in our system; second, to examine the role of $\alpha$; and third, to compare our dynamic with several benchmark dynamics from the literature.

\subsection{Numerical setup}

We consider the convex nonsmooth objective function
\begin{equation}
\label{eq:num-f}
f(x)=\frac12\bigl(x_1^2+1000x_2^2\bigr)+\|x\|_1,
\qquad x=(x_1,x_2)\in\mathbb{R}^2.
\end{equation}
This function is separable, coercive, and admits the unique minimizer
\[
x^\star=(0,0),
\qquad
f^\star=f(x^\star)=0.
\]

For $\gamma>0$, the proximal mapping of $f$ can be computed coordinatewise in closed form:
\begin{equation}
\label{eq:num-prox}
\text{prox}_{\gamma f}(x)
=
\begin{pmatrix}
\mathcal{S}_{\frac{\gamma}{1+\gamma}}\!\left(\frac{x_1}{1+\gamma}\right)\\[1ex]
\mathcal{S}_{\frac{\gamma}{1+1000\gamma}}\!\left(\frac{x_2}{1+1000\gamma}\right)
\end{pmatrix},
\end{equation}
where $\mathcal{S}_\tau(\xi)=\mathrm{sign}(\xi)\max\{|\xi|-\tau,0\}$ denotes the soft-thresholding operator. Accordingly, the gradient of the Moreau envelope is given by
\begin{equation}
\label{eq:num-grad-moreau}
\nabla f_\gamma(x)=\frac{1}{\gamma}\bigl(x-\text{prox}_{\gamma f}(x)\bigr).
\end{equation}

All differential systems are solved numerically by \texttt{ode45} in MATLAB. The absolute and relative tolerances are chosen as
\[
\texttt{AbsTol}=10^{-10},
\qquad
\texttt{RelTol}=10^{-8}.
\]
The integration interval is $[t_0,T]=[1,50]$, and the initial data are
\[
x(t_0)=x_0=(20,-15),
\qquad
\dot x(t_0)=v_0=(0,0).
\]

For each trajectory $x(\cdot)$, we monitor the following quantities:
\begin{align}
\label{eq:num-gap}
f\bigl(\text{prox}_{\gamma(t)f}(x(t))\bigr)-f^\star,
\qquad
\|\nabla f_{\gamma(t)}(x(t))\|,
\qquad
\|x(t)\|.
\end{align}
We also plot the trajectory in the phase plane $(x_1(t),x_2(t))$.

\medskip

\noindent
\textbf{Remark on the diagnostics.}
When comparing models that are run under different theory-prescribed smoothing schedules, the raw values of
\[
f(\text{prox}_{\gamma(t)f}(x(t)))-f^\star
\quad\text{and}\quad
\|\nabla f_{\gamma(t)}(x(t))\|
\]
may start at very different levels for reasons that are intrinsic to the smoothing scale rather than to the quality of the dynamics. For this reason, in the cross-model comparison we also use the relative quantities
\begin{equation}
\label{eq:relative-gap}
\frac{f(\text{prox}_{\gamma(t)f}(x(t)))-f^\star}
     {f(\text{prox}_{\gamma(t_0)f}(x(t_0)))-f^\star},
\qquad
\frac{\|\nabla f_{\gamma(t)}(x(t))\|}
     {\|\nabla f_{\gamma(t_0)}(x(t_0))\|},
\end{equation}
which provide a scale-free comparison of the decay produced by each model.

\subsection{Influence of the parameter $\beta$}

We first study the influence of the parameter $\beta$ in our proposed dynamic. In this experiment, $\alpha$ is fixed at
\[
\alpha=4,
\]
while $\beta$ is varied over the set
\[
\beta\in\{0.01, 0.08, 0.8, 1.5\}.
\]
The functions $\delta(t)$ and $\gamma(t)$ are chosen according to
\[
p=0.5 \in (0, \alpha-3),
\qquad
\delta(t)=t^p=t^{0.5},
\qquad
\gamma(t)=0.01\,t^{p+2}=0.01t^{2.5}.
\]
These choices satisfy the polynomial setting in Remark~\ref{remark 3.1}. 

The corresponding numerical results are displayed in Figure~\ref{fig:beta}. The objective-value and gradient plots show that all choices of \(\beta\) lead to the same qualitative asymptotic behavior, in agreement with the fact that the convergence rates given by Theorem~\ref{first convergence result} are governed by the factor \(t^2\delta(t)\), hence here by \(t^{2.5}\), and do not explicitly depend on \(\beta\). More precisely, for the present scaling one has
\[
f\bigl(\prox_{\gamma(t)f}(x(t))\bigr)-f^\star=o(t^{-2.5}),
\qquad
\|\nabla f_{\gamma(t)}(x(t))\|=o(t^{-2.5}),
\]
as \(t\to+\infty\). 

The role of \(\beta\) is instead clearly visible in the transient behavior and in the geometry of the trajectories. The term weighted by \(\beta\) is a Hessian-driven damping term, and such terms are known to attenuate the oscillations of inertial systems. This effect is precisely what is observed in Figure~\ref{fig:beta}: as \(\beta\) increases, the oscillations in the objective, gradient and norm of trajectory curves become less pronounced, and the trajectories in the phase plane approach the minimizer in a more regular and less oscillatory way.

\begin{figure}[!ht]
	\centering
    \subfloat[Objective gaps]{\includegraphics[width = 2.9in]{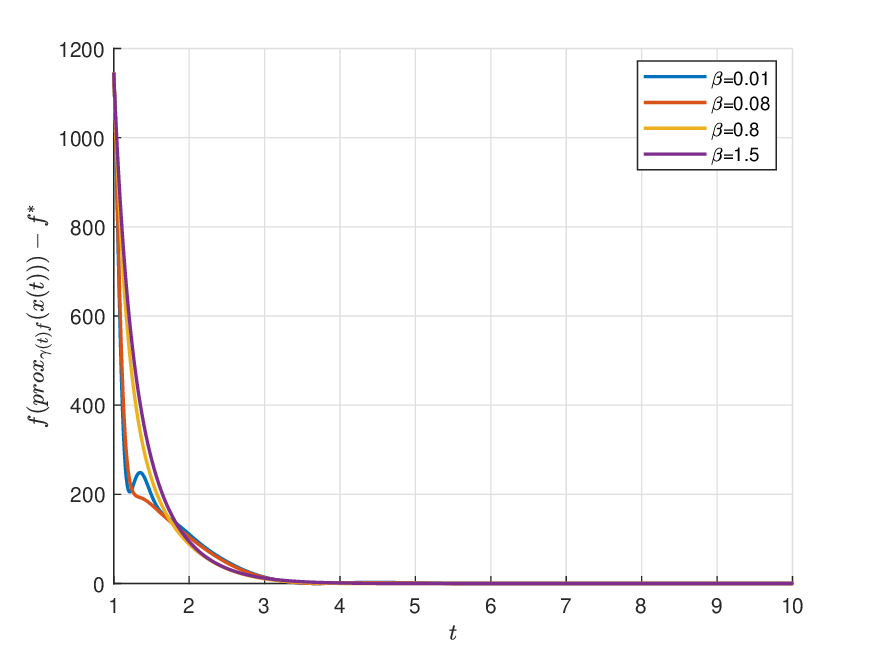}} 
	\subfloat[Gradient norms]{\includegraphics[width = 2.9in]{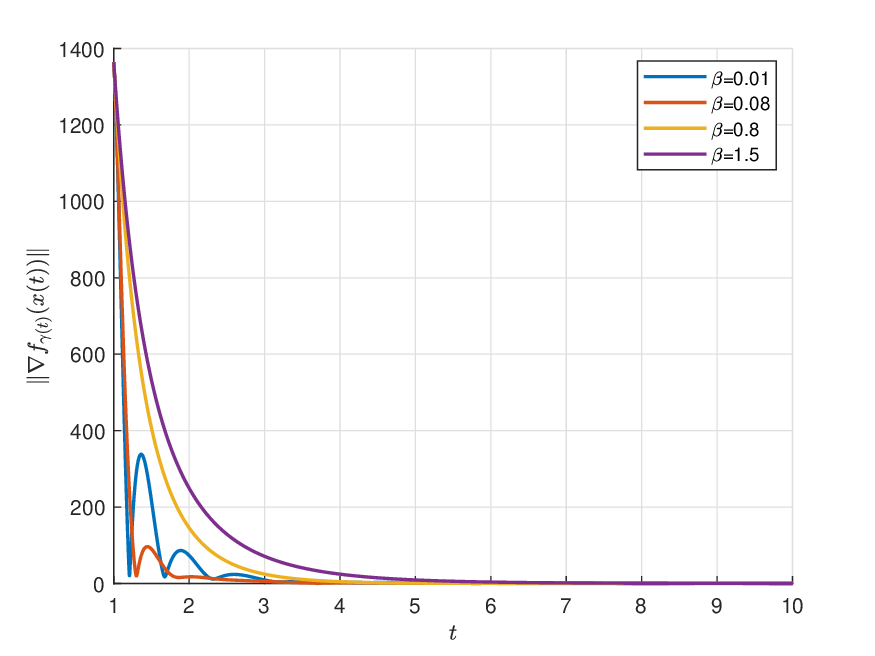}} \\
    \subfloat[Norms of the trajectories]{\includegraphics[width = 2.9in]{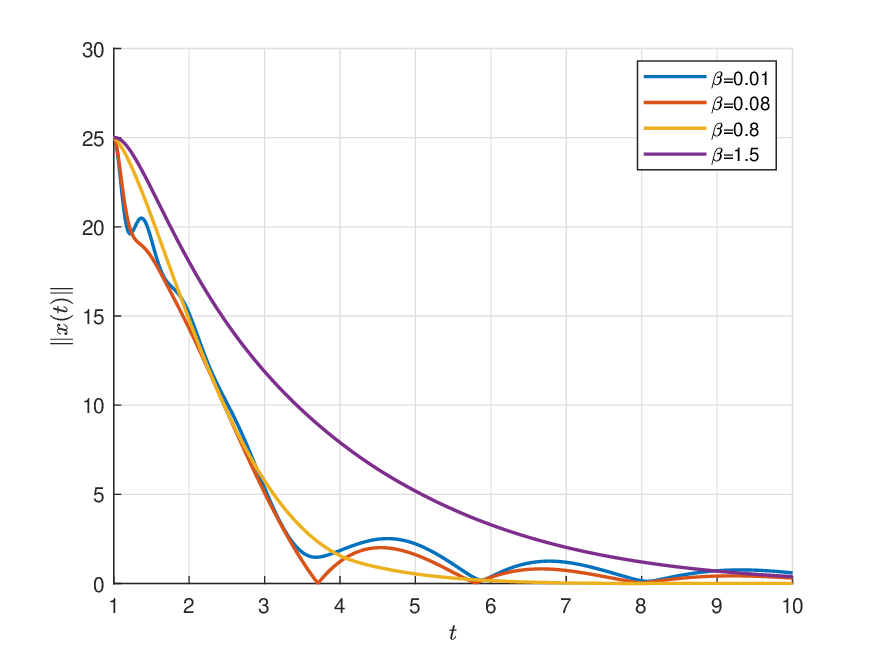}} 
	\subfloat[Phase-plane trajectories]{\includegraphics[width = 2.9in]{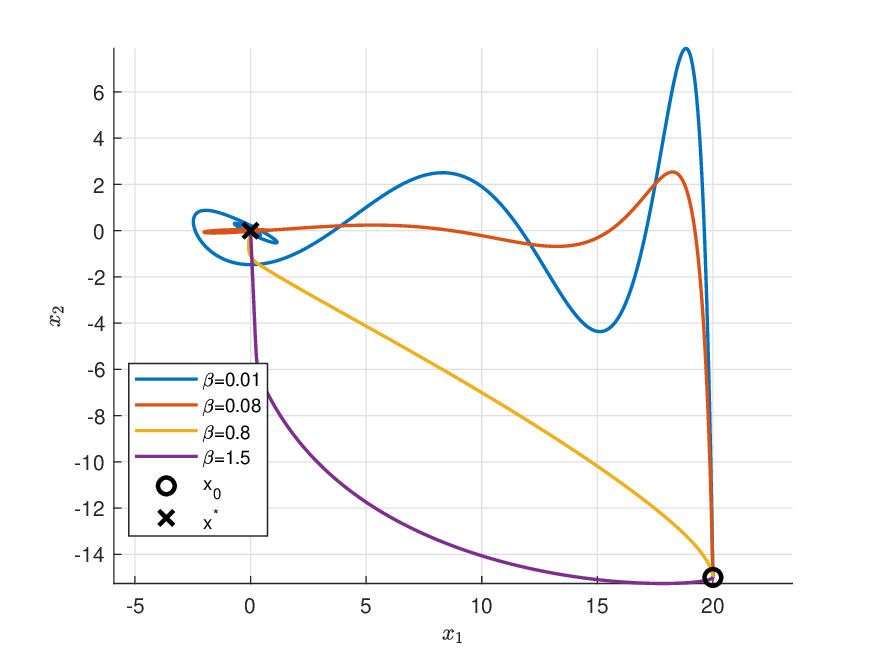}}
	\caption{Influence of the parameter $\beta$ on the behavior of \((\mathrm{NS\text{-}HR})_{\alpha,\beta}\).}
    \label{fig:beta}
\end{figure}

\subsection{Influence of the parameter $\alpha$}

We next analyze the effect of the parameter $\alpha$ while keeping $\beta$ fixed at
\[
\beta=1.
\]
The values of $\alpha$ are taken in the set
\[
\alpha\in\{4, 5, 6, 8\}.
\]
The functions $\delta(t)$ and $\gamma(t)$ are chosen according to
\[
p=\alpha-3.5 \in (0, \alpha-3),
\qquad
\delta(t)=t^p=t^{\alpha-3.5},
\qquad
\gamma(t)=0.01\,t^{p+2}=0.01\,t^{\alpha-1.5}.
\]
Again, these schedules fall within the polynomial regime covered by Remark~\ref{remark 3.1}. 

In this regime, Theorem~\ref{first convergence result} yields
\[
f\bigl(\prox_{\gamma(t)f}(x(t))\bigr)-f^\star
=o\!\left(\frac1{t^2\delta(t)}\right)
=o\!\left(t^{-(\alpha-1.5)}\right),
\]
and
\[
\|\nabla f_{\gamma(t)}(x(t))\|
=o\!\left(\frac1{t^2\delta(t)}\right)
=o\!\left(t^{-(\alpha-1.5)}\right).
\]
Therefore, the theoretical prediction is that larger values of \(\alpha\) should lead to faster convergence, because the exponent \(\alpha-1.5\) increases with \(\alpha\). 
This trend is clearly reflected in Figure~\ref{fig:alpha}. In both the objective and gradient plots, larger values of \(\alpha\) produce faster decay, and the separation between the curves is consistent with the ordering suggested by the theoretical rates. 

\begin{figure}[!ht]
	\centering
    \subfloat[Objective gaps]{\includegraphics[width = 2.9in]{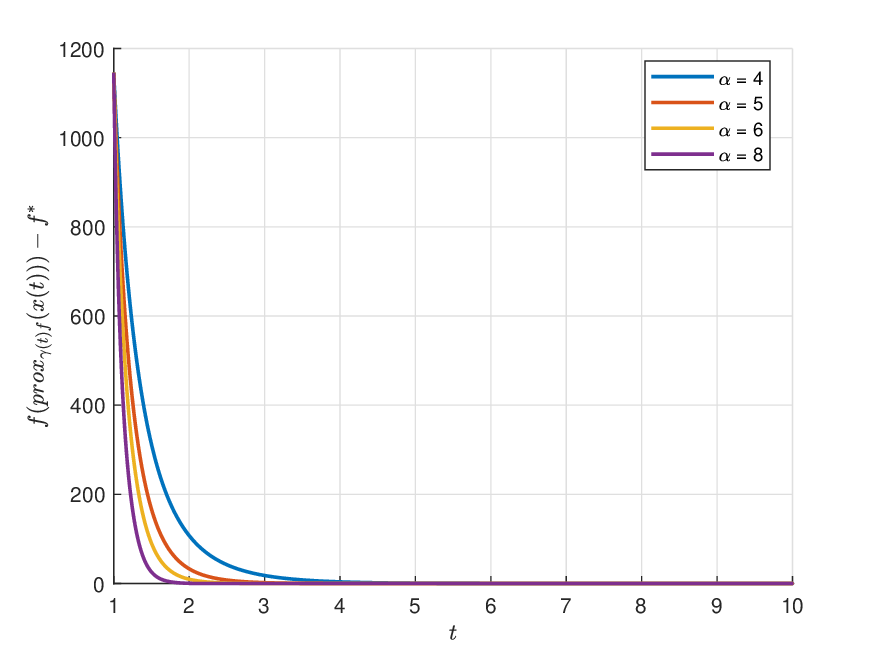}} 
	\subfloat[Gradient norms]{\includegraphics[width = 2.9in]{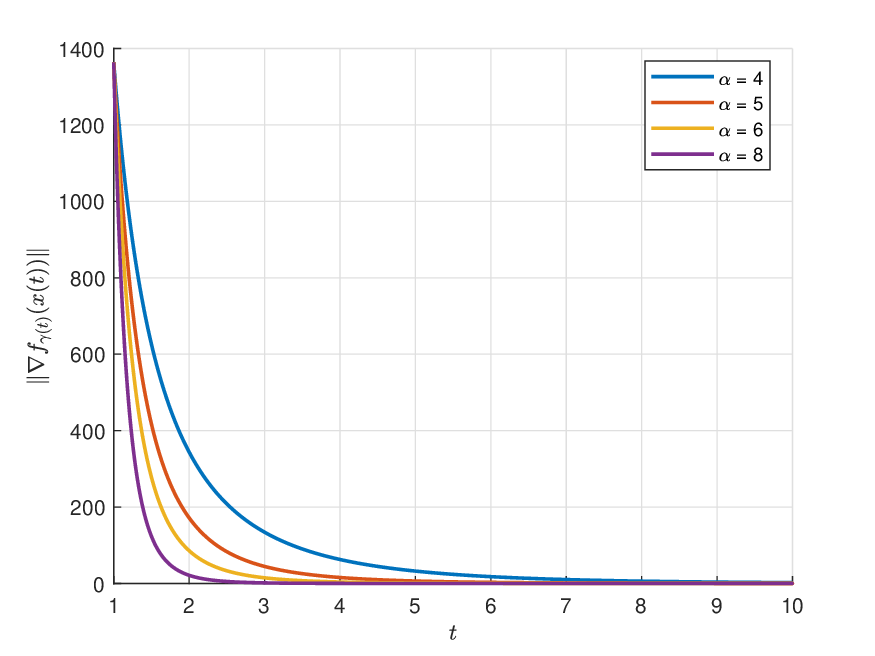}}
	\caption{Influence of the parameter $\alpha$ on the behavior of \((\mathrm{NS\text{-}HR})_{\alpha,\beta}\).}
    \label{fig:alpha}
\end{figure}

\subsection{Comparison with baseline and reference dynamics}

We now compare our proposed dynamic with four benchmark models:
\begin{enumerate}
    \item the reduced system obtained by setting $\beta=0$ while keeping the same factor $\delta(t)$,
    \item the further simplified system obtained by setting $\beta=0$ and $\delta(t)\equiv 1$,
    \item the Attouch--L\'aszl\'o Newton-like inertial dynamic ~\cite{AttouchLaszlo2021},
    \item the Bot--Karapetyants time-scaled Newton-like dynamic ~\cite{BotKarapetyants2022}.
\end{enumerate}

\paragraph{Our dynamic.}
For our model, the parameters are chosen as
\[
\alpha=4,
\qquad
\beta=1,
\qquad
\delta(t)=t^{0.5},
\qquad
\gamma(t)=10^{-4}t^{2.5}.
\]
According to Theorem~\ref{first convergence result}, this choice yields
\[
f\bigl(\prox_{\gamma(t)f}(x(t))\bigr)-f^\star=o(t^{-2.5}),
\qquad
\|\nabla f_{\gamma(t)}(x(t))\|=o(t^{-2.5}),
\]
and weak convergence of \(x(t)\) to a minimizer. Our dynamic achieves, for this choice of parameters, the accelerated objective decay \(o(t^{-2.5})\) while also preserving the same \(o(t^{-2.5})\) rate at the level of the Moreau-envelope gradient. 

\paragraph{Baseline dynamics.}
The two baseline models are run with the same values of $\alpha$, the same initial conditions, and the same smoothing schedule $\gamma(t)$ as our dynamic.

\paragraph{Attouch--L\'aszl\'o dynamic.}
The Attouch--L\'aszl\'o model is the Newton-like inertial dynamic
\begin{equation}
\label{eq:attouch-numexp}
\ddot x(t)+\frac{\alpha}{t}\dot x(t)
+\beta \frac{d}{dt}\nabla f_{\lambda(t)}(x(t))
+\nabla f_{\lambda(t)}(x(t))=0.
\end{equation}
For the Attouch--L\'aszl\'o model, we use the parameter regime
\[
\alpha=4,
\qquad
\beta=1,
\qquad
\lambda(t)=\lambda t^2,
\qquad
\lambda=\frac{1.1}{9}.
\]
This choice satisfies the assumptions of their convergence theorem and their theoretical results implies the weak convergence of the trajectory together with
\[
f(\text{prox}_{\lambda(t)f}(x(t)))-f^\star=o(t^{-2}),
\qquad
\|\nabla f_{\lambda(t)}(x(t))\|=o(t^{-2}).
\]
It is worth emphasizing that this \(o(t^{-2})\) objective rate is the best theoretical rate obtained in their paper for the considered model. 
\paragraph{Bot--Karapetyants dynamic.}
The Bot--Karapetyants model is the time-scaled Newton-like dynamic
\begin{equation}
\label{eq:bot-numexp}
\ddot x(t)+\frac{\alpha}{t}\dot x(t)
+\beta(t)\frac{d}{dt}\nabla f_{\lambda(t)}(x(t))
+b(t)\nabla f_{\lambda(t)}(x(t))=0.
\end{equation}
Compared with \eqref{eq:attouch-numexp}, this system introduces an additional time-scaling function $b(t)$ in front of the regularized gradient, and a possibly time-dependent coefficient $\beta(t)$ in front of the Hessian term. \\
For the Bot--Karapetyants model, we choose
\[
\alpha=4,\qquad
\beta(t)=1,\qquad
b(t)=4.1\,t^{0.5},\qquad
\lambda(t)=t^{0.5},
\]
Their theoretical results implies weak convergence of the trajectory and the rates
\[
f(\text{prox}_{\lambda(t)f}(x(t)))-f^\star=o(t^{-2.5}),
\qquad
\|\nabla f_{\lambda(t)}(x(t))\|=o(t^{-1.5}).
\]
Thus, in the chosen regime, the Bot--Karapetyants dynamic has the same theoretical objective-value decay \(o(t^{-2.5})\) as our dynamic, but a slower gradient decay. More generally, both the Bot--Karapetyants model and the proposed dynamic can in principle achieve arbitrarily fast objective decay by a suitable choice of the time-dependent coefficients; here we deliberately selected parameters so that the two systems have the same theoretical objective-value rate, making the comparison more informative. 
\begin{figure}[!ht]
	\centering
    \subfloat[Objective gaps]{\includegraphics[width = 2.9in]{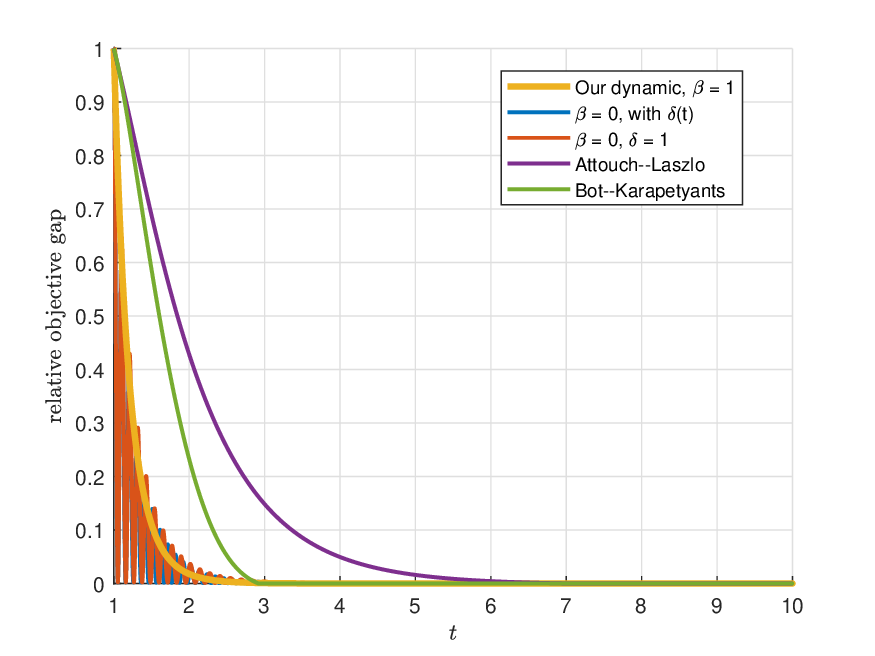}} 
	\subfloat[Gradient norms]{\includegraphics[width = 2.9in]{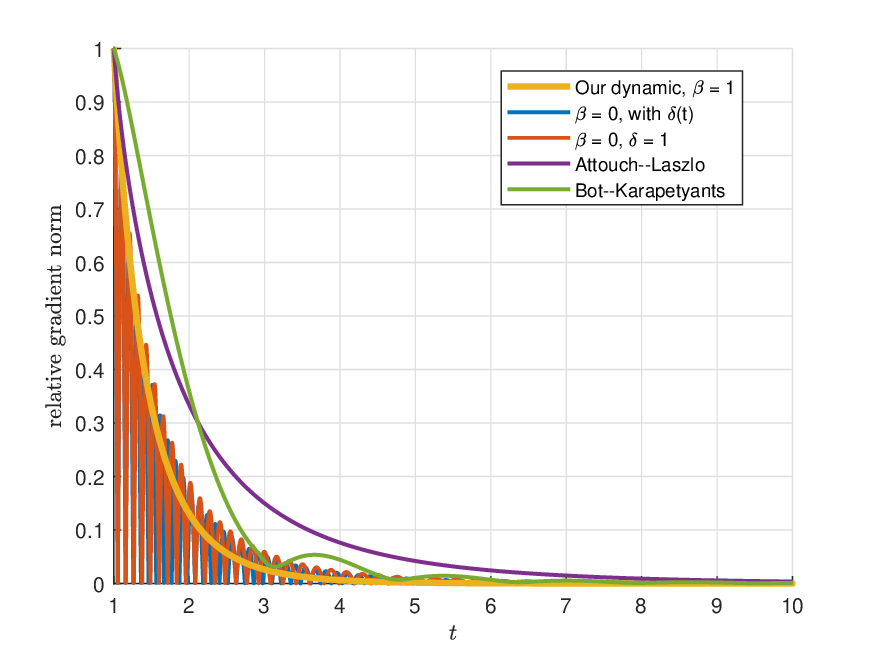}} \\
    \subfloat[Norms of the trajectories]{\includegraphics[width = 2.9in]{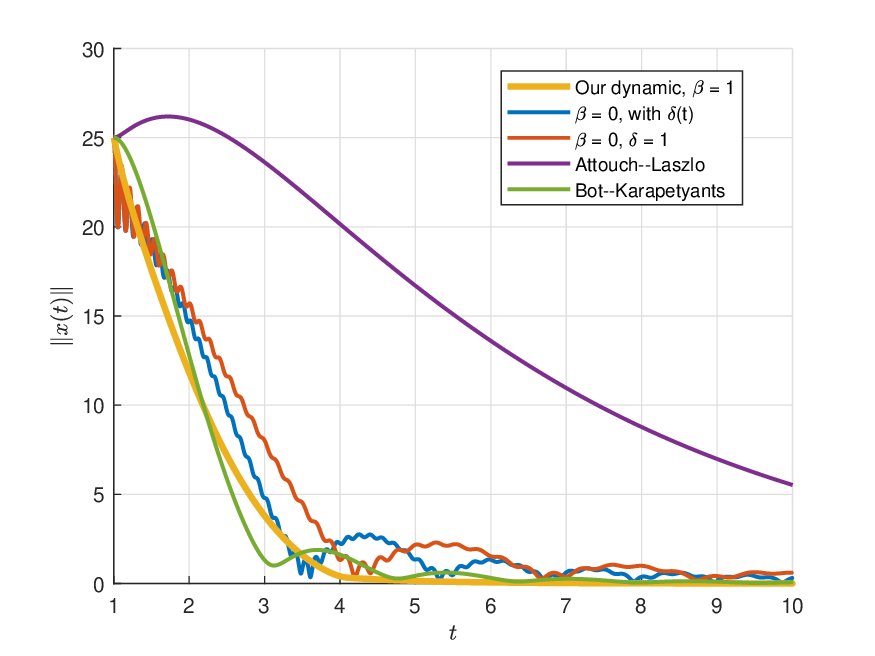}} 
	\subfloat[Phase-plane trajectories]{\includegraphics[width = 2.9in]{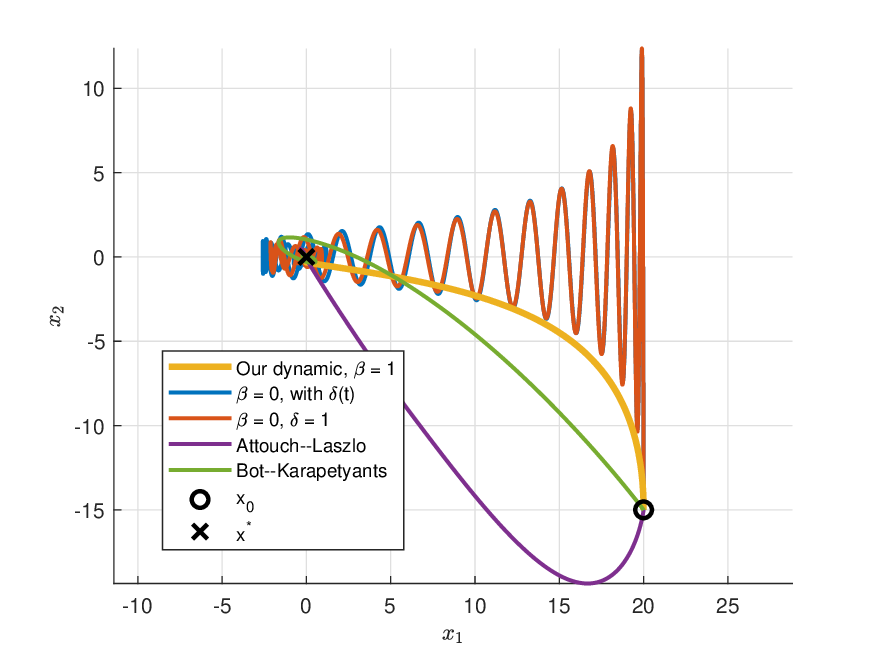}}
	\caption{Comparison of \((\mathrm{NS\text{-}HR})_{\alpha,\beta}\) with four benchmark models: the two baseline systems, the
Attouch--L\'aszl\'o dynamic, and the Bot–Karapetyants dynamic.}
    \label{fig:comparison}
\end{figure}
The comparison plots in Figure~\ref{fig:comparison} show that the proposed dynamic compares favorably with all four benchmark systems. Relative to the two baseline dynamics, the proposed model exhibits a clearly more regular behavior, with reduced oscillations and faster decay of both the normalized objective and the normalized gradient. This confirms numerically that the combination of Hessian-driven damping and time-rescaled regularized gradients improves both stability and convergence speed.

Compared with the Attouch--L\'aszl\'o dynamic, our method also performs better in the present experiment. This is consistent with the fact that our chosen scaling yields the faster theoretical objective and gradient rates \(o(t^{-2.5})\), whereas the Attouch--L\'aszl\'o model is limited here to the rate \(o(t^{-2})\).

The most interesting comparison is with the Bot--Karapetyants dynamic. By construction, the two systems have the same theoretical rate \(o(t^{-2.5})\) for the objective values. The numerical results nevertheless show an advantage for the proposed model, which can be seen in the plots in Figure~\ref{fig:comparison}. In particular, not only does our dynamic perform better in function value, it also displays a visibly stronger stabilization and a faster decrease of the Moreau-envelope gradient, in line with the theoretical prediction \(o(t^{-2.5})\) for our model versus \(o(t^{-1.5})\) for the Bot--Karapetyants dynamic.

\subsection{Discussion}

The numerical experiments support the theoretical analysis developed in the paper. First, the parameter study with respect to \(\beta\) shows that the Hessian-driven damping term mainly affects the transient regime by attenuating oscillations and stabilizing the trajectories, while preserving the same asymptotic decay order. Second, the study with respect to \(\alpha\) is fully consistent with Theorem~\ref{first convergence result}: larger values of \(\alpha\) lead to larger values of the exponent \(\alpha-1.5\), and the corresponding plots indeed display faster convergence.

Finally, the comparison with the benchmark dynamics highlights the main strength of the proposed model. The new dynamic combines the damping effect of high-resolution Newton-like systems with the acceleration induced by time-rescaled gradients. In the present numerical tests, this leads to smaller oscillations, faster practical convergence, and, in the matched-rate comparison with the Bot--Karapetyants system, a better balance between objective decrease and gradient decay. Altogether, these results illustrate the practical relevance of the proposed continuous-time model and support its interpretation as a nonsmooth high-resolution inertial dynamic.

\section{Conclusion}

We introduced a nonsmooth high-resolution inertial dynamic combining vanishing viscous damping, Hessian-driven damping, and time-rescaled gradients through the Moreau envelope. After establishing an equivalent first-order formulation, we proved global existence and uniqueness of trajectories and derived convergence properties including fast decay of the objective residual and the Moreau-envelope gradient, stabilization of velocities, and weak convergence to minimizers under suitable assumptions on the time-dependent parameters. We also extended the framework to maximally monotone operators via the Yosida approximation, showing that the proposed approach applies beyond convex minimization. The numerical experiments illustrate the influence of the parameters \(\beta\) and \(\alpha\), confirm the trends predicted by the theory, and show favorable behavior of the proposed system compared with several benchmark dynamics. Finally, we derived a natural proximal discretization of the continuous model, whose full Lyapunov and convergence analysis is left for future work; it would also be interesting to explore extensions of our dynamics to composite, constrained, or stochastic problems. Overall, the results of this paper indicate that the proposed dynamic provides a flexible and effective nonsmooth high-resolution framework, combining acceleration, damping, and time rescaling in a unified manner.

\section*{Acknowledgements}

This work was partly supported by the Agence Nationale de la Recherche (ANR) with the project ANR-23-CE48-0011-01.

\end{document}